\newcommand{\ul}[1]{\underline{\mathcal{#1}}}
\newcommand{\ca}{\mathcal}
\newcommand{\Hom}{\ensuremath{\mathrm{Hom}}}
\newcommand{\Comod}{\ensuremath{\mathbf{Comod}}}
\newcommand{\Mod}{\ensuremath{\mathbf{Mod}}}
\newcommand{\Alg}{\ensuremath{\mathbf{Alg}}}
\newcommand{\Coalg}{\ensuremath{\mathbf{Coalg}}}
\newcommand{\Mon}{\ensuremath{\mathbf{Mon}}}
\newcommand{\Comon}{\ensuremath{\mathbf{Comon}}}
\newcommand{\Bimon}{\ensuremath{\mathbf{Bimon}}}
\newcommand{\Vect}{\ensuremath{\mathbf{Vect}}}
\newcommand{\op}{\mathrm{op}}
\newcommand{\ev}{\ensuremath{\mathrm{ev}}}
\newcommand{\coev}{\ensuremath{\mathrm{coev}}}
\newcommand{\avb}{\ensuremath{\prescript{A}{}}{\mathcal{V}}_B}
\theoremstyle{plain}
\newtheorem{thm}{Theorem}[section]
\theoremstyle{plain}
\newtheorem{prop}[thm]{Proposition}
\theoremstyle{remark}
\newtheorem{rmk}[thm]{Remark}
\newtheorem{ex}[thm]{Example}
\theoremstyle{plain}
\newtheorem{lem}[thm]{Lemma}
\theoremstyle{plain}
\newtheorem{cor}[thm]{Corollary}
\theoremstyle{definition}
\newtheorem{df}[thm]{Definition}
\begin{document}
\title{Hopf measuring comonoids and enrichment}
\author[M.~Hyland]{Martin Hyland}
\address{Department of Pure Mathematics and Mathematical Statistics,
  University of Cambridge, Cambridge CB3 0WB, UK.}
\email{M.Hyland@dpmms.cam.ac.uk}
\author[I.~L\'opez Franco]{Ignacio L\'opez Franco}
\address{Department of Pure Mathematics and Mathematical Statistics,
  University of Cambridge, Cambridge CB3 0WB, UK.}
\email{ill20@cam.ac.uk}
\thanks{The second author gratefully acknowledges the support of a Research Fellowship of
  Gonville and Caius College, Cambridge}
\author[C.~Vasilakopoulou]{Christina Vasilakopoulou}
\address{Department of Mathematics, Massachusetts Institute of Technology,
Cambridge MA 02139, USA.}
\email{cvasilak@mit.edu}
\thanks{The third author gratefully acknowledges the financial support by Trinity
  College, Cambridge and the Department of Pure Mathematics and Mathematical
  Statistics of the University of
  Cambridge, as well as Propondis Foundation and Leventis Foundation.}
\subjclass[2010]{Primary: 16T15. Secondary: 18D20, 18D10, 16T05}
\keywords{Measuring coalgebra, Hopf algebra, Hopf monoid, bimonoid, Hopf monad,
  enriched category, graded coalgebra.}
\begin{abstract}
  We study the existence of universal measuring comonoids $P(A,B)$ for a pair of
  monoids $A$, $B$ in a braided monoidal closed category, and the associated
  enrichment of the category of monoids over the monoidal category of
  comonoids. In symmetric categories, we show that if $A$ is a bimonoid and $B$
  is a commutative monoid, then $P(A,B)$ is a bimonoid; in addition, if $A$ is a
  cocommutative Hopf monoid then $P(A,B)$ always is Hopf. If $A$ is a Hopf
  monoid, not necessarily cocommutative, then $P(A,B)$ is Hopf if the
  fundamental theorem of comodules holds; to prove this we give an alternative
  description of the dualizable $P(A,B)$-comodules and use the theory of Hopf
  (co)monads. We explore the examples of universal measuring comonoids in vector
  spaces and graded spaces.
\end{abstract}

\maketitle
\section{Introduction}\label{Introduction}

The \emph{finite} or \emph{Sweedler dual} of a $k$-algebra~\cite{Sweedler} plays
a central role in the duality theory of Hopf algebras. If $A$ is an algebra over
a field $k$, its finite dual $A^\circ$ is a coalgebra with the property that
coalgebra morphisms $C\to A^\circ$ are in natural bijection with algebra morphisms $A\to
C^*$, for any coalgebra $C$. When $A$ has finite dimension, $A^\circ$ is
isomorphic to the linear dual $A^*$, but in arbitrary dimension $A^*$
may not have a natural coalgebra structure.

The classical construction of the finite dual $A^\circ$ depends on the fact that
$k$ is a field, a hypothesis that was somewhat weakened in~\cite{MR1780737}. The
existence of a coalgebra $A^\circ$ satisfying the universal property described
in the previous paragraph can be proven in great generality
(see~\cite{AdjAlgCoalg}, but also Section~\ref{existmeascoal} where a braiding
is not required); in particular, $A^\circ$ exists over any commutative ring, but
its classical description may no longer hold true.

That fact that the category of $k$-algebras admits an enrichment in the category
of $k$-coalgebras has long been part of mathematical folklore. It seems that
Gavin Wraith was the first to appreciate this fact and that he lectured on it
and related matters at the Oberwolfach meeting \emph{Universelle und
  Kategorische Algebra}, 3--10~July 1968. When Sweedler's book~\cite{Sweedler} on
Hopf algebras came out, Wraith immediately recognised that the enrichment is
given by what Sweedler called the \emph{universal measuring coalgebra} of a pair
of algebras.

In the present paper we explore the existence of a generalisation of the finite
dual construction, called \emph{universal measuring comonoids}
$P(A,B)$, for a pair of monoids $A$, $B$ in a monoidal closed category, and the
properties of this construction. The comonoid $P(A,B)$ is defined by the the
property that monoid morphisms
$A\to[C,B]$ are in natural bijection with comonoid morphisms $C\to P(A,B)$, for all
comonoids $C$; note that $A^\circ=P(A,I)$, where $I$ is the monoidal unit. We
show that, when the monoidal category has a braiding, the functor 
with mapping on objects $(A,B)\mapsto
P(A,B)$ is monoidal, so there are coherent comonoid morphisms
\begin{equation}
  \label{eq:50}
  P(A,B)\otimes P(A',B')\longrightarrow P(A\otimes A',B\otimes B')
  \quad\text{and}\quad
  I\longrightarrow P(I,I)
\end{equation}
(the latter is invertible), and when the braiding is a symmetry, $P$ is a
braided monoidal functor.

The enrichment of the opposite of the category $\Mon(\mathcal{V})^{\mathrm{op}}$
of monoids in $\mathcal{V}$ over the category $\Comon(\mathcal{V})$ of comonoids
in $\mathcal{V}$ arises from an action of the latter, viewed as a monoidal
category, on the former. We are lead to consider actions of monoidal categories
and answer the following question: what extra structure on an action of the
monoidal category $\mathcal{C}$ on $\mathcal{A}$ ensures that the associated
$\mathcal{C}$-enriched category is monoidal? This extra
structure is what we call an \emph{opmonoidal action}, and we use it to deduce that for
a symmetric $\mathcal{V}$, the category of monoids is a symmetric monoidal
$\Comon(\mathcal{V})$-category.

The classical construction of the Sweedler dual $A^\circ$ of a
$k$-algebra~\cite{Sweedler} satisfies two important properties: $A^\circ$ is a
bialgebra if $A$ is so, and $A^\circ$ is Hopf algebra if $A$ is so. We show in
complete generality that $P(A,B)$ is a bimonoid if $A$ is a bimonoid and $B$ is
a commutative monoid. We then prove that $P(A,B)$ is a Hopf monoid in two situations.
First, if $A$ is a Hopf cocommutative bimonoid and $B$ is commutative; secondly,
if $A$ is a Hopf monoid (not necessarily cocommutative) and the base symmetric
monoidal category satisfies the fundamental theorem of comodules. To prove this
last result, we provide an alternative description of the dualizable
$P(A,B)$-comodules, as dualizable objects $X$ equipped with a morphism
$A\otimes X\to X\otimes B$ that satisfies two axioms.

We now briefly outline the organisation of the article. Section~\ref{background}
collects some known facts about monoidal closed categories, monoids and
comonoids, and locally presentable categories. After recalling the connection
between actions of monoidal categories and enrichment, Section~\ref{actions}
introduces \emph{opmonoidal actions} and \emph{braided opmonoidal actions}, and
proves that they give rise to monoidal and braided monoidal enriched
categories. Section~\ref{existmeascoal} studies the existence of universal
measuring comonoids in a more general setting
than the category of $R$-modules of~\cite{AdjAlgCoalg},
namely, in locally presentable monoidal
categories. The enrichment of monoids in comonoids is recovered in
Section~\ref{enrichmonscomons}, while Section~\ref{sec:calc-meas-coalg} gives
some tools to compute universal measuring comonoids, especially
via their comodules. Section \ref{sec:monoidal-structures}
explores induced monoidal structures of dualizable comodules, and \ref{sec:univ-meas-coalg-1}
certain (co)commutativity relations for $P(A,B)$. 
In Section~\ref{sec:meas-comon-hopf}, we move to the Hopf setting
by proving that the universal measuring
comonoids of cocommutative Hopf monoids are Hopf monoids,
in the general context of a locally presentable symmetric monoidal
closed category. The case when the Hopf monoid is not necessarily cocommutative
is dealt with in Section~\ref{sec:univ-meas-comon}, which also contains some aspects
of the theory of Hopf monads and comonads. The example of graded
coalgebras is given its own Section~\ref{sec:exampl-dg-coalg}.

The presentation of Sections~\ref{background}, \ref{existmeascoal},
\ref{enrichmonscomons} and part of Section~\ref{actions} is similar to that
found in~\cite{2012arXiv1205.6450V,2014arXiv1411.3038V}. Soon after the first
version of this manuscript was made public, the
preprint~\cite{2015arXiv151001797P} appeared, containing some overlapping
material.

\section{Background}\label{background}
Let us start the section with a few words on terminology and notation around
monoidal categories, for which \cite{BraidedTensorCats}~is a standard reference.
Throughout the paper, the tensor product and unit object of
monoidal categories will be denoted by $\otimes$ and $I$, and the associativity and unit
constraints will be omitted in many occasions (something that is allowed by the
coherence theorem for monoidal categories). A \emph{left closed} monoidal category
$\mathcal{V}$ will be one for which the functor $(-\otimes X)$ has a right
adjoint $[X,-]$, for all objects $X$; the resulting functor $[-,-]$ is called
the \emph{left internal hom}. Symmetrically, a \emph{right closed} monoidal category is one
for which each $(X\otimes-)$ has a right adjoint $[X,-]'$, called the \emph{right
internal} hom. Braidings will be denoted by the letter $c$,
and they induce a \emph{biclosed} monoidal structure on $\ca{V}$,
should it be right or left closed.

A dual pair in a monoidal category is a pair of objects $X$,
$X^\vee$ with two morphisms $\ev\colon X^\vee\otimes X\to I$ and $\coev\colon
I\to X\otimes X^\vee$, satisfying two ``triangular equalities''; $X^\vee$ is
said to be a \emph{left dual} of $X$, and, reciprocally, $X$ a \emph{right dual} of
$X^\vee$. A dual pair induces an adjunction $(-\otimes X)\dashv(-\otimes
X^\vee)$, and $Y\otimes X^\vee$ is the left internal hom from $X$ to
$Y$. When the monoidal category is braided, right duals can be obtained from
left duals, via the braiding, and the adjectives ``left'' and ``right'' may be
dropped.  For example, a $k$-module $X$ has a dual if and only if it is
projective and finitely presentable, in which case the dual is the usual linear dual $X^*$.

An object of a braided monoidal category $\mathcal{V}$ is \emph{dualizable} if
it has a dual (left dual, or equivalently, right dual). Given a functor $U\colon
\mathcal{C}\to\mathcal{V}$, an object $X\in\mathcal{C}$ is
\emph{$U$-dualizable}, or simply dualizable when $U$ is implicit, if $U(X)$ is
dualizable in $\mathcal{V}$.

A \emph{monoidal functor} between monoidal categories $\mathcal{V}$ and
$\mathcal{W}$ will be a functor $F\colon\mathcal{V}\to \mathcal{W}$ equipped
with a transformation $F_{2,X,Y}\colon F(X)\otimes F(Y)\to F(X\otimes Y)$ and a
morphism $F_0:I\to F(I)$ satisfying coherence axioms; see, eg \cite[\S
1]{BraidedTensorCats}. Other names in use for this notion are \emph{tensor
  functor} or \emph{lax monoidal functor}. The dual notion will be called an
\emph{opmonoidal functor}, ie $F$ is equipped with a transformation
$F_{2,X,Y}\colon F(X\otimes Y)\to F(X)\otimes F(Y)$ and a morphism $F_0\colon
F(I)\to I$, satisfying coherence axioms. Other names in use for this notion are
\emph{colax monoidal functor} and \emph{oplax monoidal functor}. If $F_{2,X,Y}$
and $F_0$ are isomorphisms (resp. identities), $F$ is a \emph{strong}
(resp. \emph{strict}) monoidal functor, and it is moreover \emph{braided}
monoidal when it preserves the braiding in the appropriate sense.

Throughout the paper, we employ the well-known fact that the right adjoint of a strong
monoidal functor between monoidal categories has a unique monoidal structure
such that the unit and counit of the adjunction become monoidal natural
transformations.
This generalises to the case of
parametrised adjoints; a higher dimension version of the following proposition appeared
in~\cite[Prop.~2]{Monoidalbicats&hopfalgebroids}.
\begin{prop}\label{param}
Suppose $F:\ca{B}\times\ca{C}\to \ca{D}$
and $G:\ca{C}^\mathrm{op}\times\ca{D}\to\ca{B}$ are
parametrised adjoints, ie $F(-,C)\dashv G(C,-)$ for
all $C$, and all the categories
are monoidal. Then there is a bijection between opmonoidal structures on $F$
and monoidal structures on $G$.
\end{prop}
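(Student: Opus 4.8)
The plan is to realise the stated correspondence as the mate bijection for the parametrised adjunction. Recall first that a parametrised adjunction $F(-,C)\dashv G(C,-)$ makes $G\colon\ca{C}^\op\times\ca{D}\to\ca{B}$ into a functor equipped with a unit $\eta^C_B\colon B\to G(C,F(B,C))$ and a counit $\varepsilon^C_D\colon F(G(C,D),C)\to D$ natural in $B$, $C$ and $D$ in the appropriate variances; this is the standard theorem on adjunctions with a parameter. Given an opmonoidal structure $(F_2,F_0)$ on $F$, I would define the candidate monoidal constraint $G_2$ at $(C,D),(C',D')$ to be the transpose, under $F(-,C\otimes C')\dashv G(C\otimes C',-)$, of
\[
\begin{aligned}
F\bigl(G(C,D)\otimes G(C',D'),\,C\otimes C'\bigr)
&\xrightarrow{\,F_2\,}F(G(C,D),C)\otimes F(G(C',D'),C')\\
&\xrightarrow{\,\varepsilon^C_D\otimes\varepsilon^{C'}_{D'}\,}D\otimes D',
\end{aligned}
\]
and the unit $G_0\colon I\to G(I,I)$ to be the transpose of $F_0\colon F(I,I)\to I$.

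Conversely, from a monoidal structure $(G_2,G_0)$ on $G$ I would define $F_2$ to be the transpose, under the same adjunction, of
\[
\begin{aligned}
B\otimes B'
&\xrightarrow{\,\eta^C_B\otimes\eta^{C'}_{B'}\,}G(C,F(B,C))\otimes G(C',F(B',C'))\\
&\xrightarrow{\,G_2\,}G\bigl(C\otimes C',\,F(B,C)\otimes F(B',C')\bigr),
\end{aligned}
\]
and $F_0$ the transpose of $G_0$. These two assignments are precisely the two legs of the mate correspondence for the square whose vertical edges are the adjunctions $F(-,C)\times F(-,C')\dashv G(C,-)\times G(C',-)$ and $F(-,C\otimes C')\dashv G(C\otimes C',-)$, and whose horizontal edges are the tensor functors $\otimes\colon\ca{B}\times\ca{B}\to\ca{B}$ and $\otimes\colon\ca{D}\times\ca{D}\to\ca{D}$: with these identifications $G_2$ is literally the mate of $F_2$. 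Since taking mates is a bijection on $2$-cells, the two assignments are mutually inverse, and naturality of $G_2$ in all four variables (respecting the contravariance of $G$ in its $\ca{C}$-argument) is inherited from the naturality of $\eta$ and $\varepsilon$ built into the parametrised adjunction.

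It then remains to match the coherence conditions, so that the opmonoidal coassociativity and the two counitality axioms for $(F,F_2,F_0)$ become the associativity and unit axioms for $(G,G_2,G_0)$. The conceptual reason is the functoriality of the mate calculus: taking mates sends an equality of pasting composites to the corresponding equality of pasted mates. Because $F$ is opmonoidal for the \emph{product} monoidal structure on $\ca{B}\times\ca{C}$, and the associativity and unit constraints of both $\ca{B}$ and $\ca{D}$ intervene, I would carry out this step concretely: transpose each axiom across the adjunction at $C\otimes C'\otimes C''$ (respectively at $I$), and rewrite using the defining formula for $G_2$ together with the naturality of $\varepsilon$, reading off the desired axiom for $G$.

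The inverse-bijection half of the statement is formal once the mate square is set up correctly; the substance, and the main obstacle, is this last piece of coherence bookkeeping. The delicate points are disentangling the independent $\ca{B}$- and $\ca{C}$-contributions through a single adjunction and keeping the associators and unitors of three distinct monoidal categories in their correct positions. I would contain this by reducing each coherence axiom to one pasting identity of mates rather than expanding the full hexagons, which is where orientation errors would otherwise be most likely to arise.
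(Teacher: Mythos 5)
Your proposal is correct: the mate/doctrinal-adjunction argument you give is exactly the standard proof of this statement, and it matches the route the paper implicitly has in mind, since the paper offers no proof of Proposition~\ref{param} at all, presenting it as the parametrised generalisation of the folklore fact about right adjoints of strong monoidal functors and citing a higher-dimensional version (Day--Street) for details. Your transposition formulas for $G_2$, $G_0$, $F_2$, $F_0$ and the observation that the coherence axioms correspond because mating preserves pasting composites are precisely the content of that argument, so the remaining work you flag is indeed only routine bookkeeping.
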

Let $\ca{V}$ be a braided monoidal closed category. Recall from~\cite[\S 5]{BraidedTensorCats}
that the braiding endows $\otimes\colon\ca{V}\times\ca{V}\to\ca{V}$ with a
strong monoidal structure, given by
\begin{equation}
  \label{eq:2}
  1\otimes c_{Y\otimes Z\otimes W}\colon
  X\otimes Y\otimes Z\otimes W\xrightarrow{\;\sim\;}X\otimes Z\otimes Y\otimes W
  \quad\textrm{and}\quad
  I\cong I\otimes I.
\end{equation}
By definition, the internal hom is a parametrised right adjoint to
$(-\otimes A)\cong(A\otimes -)$. As a result,
the bifunctor $[-,-]:\ca{V}^\mathrm{op}\times\ca{V}\to\ca{V}$ has a monoidal structure, by
Proposition~\ref{param}. In the case that the braiding is a symmetry, both
the tensor product and internal hom become braided monoidal functors.

\subsection{Monoids, comonoids and bimonoids}
\label{sec:mono-comon-bimon}

A \emph{monoid} in a monoidal category $\mathcal{V}$ is an object $A$ equipped with a
multiplication and unit morphisms $\mu\colon H\otimes H\to H\leftarrow I:\iota$
that satisfy the usual associativity and unit axioms, that we depict.
\begin{equation}
  \label{eq:1}
  \diagram
  A\otimes A\otimes A\ar[r]^-{\mu\otimes1}\ar[d]_{1\otimes \mu}&
  A\otimes A\ar[d]^\mu\\
  A\otimes A\ar[r]^-\mu&
  A
  \enddiagram
  \qquad
  \diagram
  A\ar[r]^-{\iota\otimes 1}\ar[dr]_1&
  A\otimes A\ar[d]^\mu &
  A\ar[l]_-{1\otimes \iota}\ar[dl]^1\\
  &A&
  \enddiagram
\end{equation}
A morphism of monoids $(A,\iota,\mu)\to(A',\iota',\mu')$ is a morphism $f\colon
A\to A'$ in $\mathcal{V}$ compatible with multiplication and unit; ie such that
$\mu'\cdot(f\otimes f)=f\cdot\mu$ and $f\cdot \iota=\iota'$.

A \emph{comonoid} in $\mathcal{V}$ is a monoid in the opposite monoidal category
$\mathcal{V}^{\mathrm{op}}$; it consists of an object $C$ with a
comultiplication $\Delta\colon C\to C\otimes C$ and a counit $\varepsilon\colon
C\to I$ satisfying axioms dual to those of a monoid. Morphisms of comonoids are
defined in a dual fashion to morphisms of monoids.

The categories of monoids and comonoids in a monoidal category $\ca{V}$
will be denoted, respectively, by $\Mon(\mathcal{V})$ and
$\Comon(\mathcal{V})$.
For $\ca{V}$ braided, these categories are monoidal: if $A$ and $A'$ are monoids, then
$A\otimes A'$ has a monoid structure with multiplication $(\mu\otimes
\mu')\cdot(A \otimes c_{A',A}\otimes A')$.
The respective forgetful functors into $\mathcal{V}$ are strict monoidal.
These categories need not support a braiding unless $\mathcal{V}$ is symmetric,
as explained below.

\begin{rmk}
  \label{rmk:2}
 The monoidal category $\Comon(\ca{V})$ on a braided monoidal
 category $\mathcal{V}$ has a braiding given by $c_{AB}\colon A\otimes B\to
 B\otimes A$, ie $c_{AB}$ is a morphism of comonoids, if $c$ is a
 symmetry. The analogous result holds for $\Mon(\ca{V})$.
\end{rmk}

Monoidal functors preserve monoids, in the sense that, given a monoid $A$
and a monoidal functor $F\colon\mathcal{V}\to\mathcal{W}$, then $F(A)$ is a
monoid with multiplication $F(\mu)\cdot F_{2,A,A}$ and unit $F(\iota)\cdot F_0$.
We denote the induced functor between the categories of monoids by $\Mon(F):\Mon(\ca{V})\to
\Mon(\ca{W})$. Dually, opmonoidal functors preserve comonoids.

As an example, the monoidal structure of $\Mon(\ca{V})$ and $\Comon(\ca{V})$
can be deduced from $\otimes$ being strong monoidal.
Also, the internal hom functor induces
\begin{equation}\label{MonHom}
  \Mon[-,-]\colon
  \Comon(\ca{V})^\mathrm{op} \times\Mon(\ca{V})
  \longrightarrow
  \Mon(\ca{V})\qquad
  (C,A)\mapsto [C,A].
\end{equation}
In particular, whenever $C$ is a comonoid and $A$ a monoid, the object $[C,A]$
is endowed with the structure of a monoid, sometimes called the
\emph{convolution} monoid structure. We record for later reference:
\begin{lem}\label{Hsymmetriclax}
The internal hom functor $[-,-]$
of a braided monoidal closed category $\mathcal{V}$
induces a
functor $\Comon(\ca{V})^\mathrm{op}
\times\Mon(\ca{V})\to\Mon(\ca{V})$. When the braiding is a symmetry, the domain
and codomain are symmetric monoidal categories and this functor is braided.
\end{lem}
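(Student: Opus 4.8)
The plan is to obtain both assertions from Proposition~\ref{param}, the fact that monoidal functors preserve monoids, and the functoriality of $\Mon(-)$. For the first assertion I would argue as in the text preceding the lemma: the braiding equips $\otimes\colon\ca{V}\times\ca{V}\to\ca{V}$ with the strong monoidal structure \eqref{eq:2}, which is in particular opmonoidal. Since $[-,-]$ is the parametrised right adjoint of $\otimes$, Proposition~\ref{param} transports this opmonoidal structure to a (lax) monoidal structure on $[-,-]\colon\ca{V}^\op\times\ca{V}\to\ca{V}$. A monoidal functor preserves monoids, so I would apply $\Mon(-)$ and use the identification $\Mon(\ca{V}^\op\times\ca{V})\cong\Mon(\ca{V}^\op)\times\Mon(\ca{V})\cong\Comon(\ca{V})^\op\times\Mon(\ca{V})$ to produce the desired functor, whose value on a pair $(C,A)$ is the convolution monoid $[C,A]$ of \eqref{MonHom}.

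For the braided assertion, first I would record that when $c$ is a symmetry all the categories in sight are symmetric monoidal: $\ca{V}$ and $\ca{V}^\op\times\ca{V}$ by hypothesis, $\Mon(\ca{V})$ and $\Comon(\ca{V})$ by Remark~\ref{rmk:2}, and $\Comon(\ca{V})^\op$ because the opposite of a symmetric monoidal category is again symmetric monoidal. Under the identification above, the product symmetric structure on $\Comon(\ca{V})^\op\times\Mon(\ca{V})$ is exactly the one $\Mon(-)$ assigns to the symmetric monoidal category $\ca{V}^\op\times\ca{V}$. As noted in the text following Proposition~\ref{param}, when the braiding is a symmetry both $\otimes$ and $[-,-]$ are braided monoidal functors, so it remains to check that $\Mon(-)$ carries the braided monoidal functor $[-,-]$ to a braided monoidal functor.

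This last point is where the verification sits, and it is routine rather than deep. The monoidal structure maps of $\Mon([-,-])$ are, on underlying objects, precisely those of $[-,-]$, while the braiding of $\Mon(\ca{V})$ (resp.\ of $\Comon(\ca{V})^\op\times\Mon(\ca{V})$) is the underlying braiding $c$ of $\ca{V}$ (resp.\ of $\ca{V}^\op\times\ca{V}$); the relevant forgetful functors into $\ca{V}$ and into $\ca{V}^\op\times\ca{V}$ are strict braided monoidal and faithful. Hence the compatibility axiom asserting that $\Mon([-,-])$ preserves braidings is the image under these faithful forgetful functors of the corresponding axiom for $[-,-]$, which holds by the previous paragraph. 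The only genuine bookkeeping obstacle is confirming that the braiding on $\Comon(\ca{V})^\op$ is the one induced from $\Comon(\ca{V})$ by passing to the opposite, equivalently that it agrees with the braiding of $\Mon(\ca{V}^\op)$; here the symmetry hypothesis is essential, since for a merely braided $\ca{V}$ the morphism $c_{AB}$ need not be a comonoid morphism and $\Comon(\ca{V})$ would fail to be braided at all.
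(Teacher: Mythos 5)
Your proposal is correct and follows essentially the same route as the paper: the lemma there is ``recorded for later reference'' precisely on the strength of the preceding discussion, namely the strong monoidal structure~\eqref{eq:2} on $\otimes$, Proposition~\ref{param} transporting it to a monoidal structure on $[-,-]$, the fact that monoidal functors preserve monoids together with the identification $\Mon(\ca{V}^\op\times\ca{V})\cong\Comon(\ca{V})^\op\times\Mon(\ca{V})$, and the observation that $[-,-]$ is braided monoidal when $c$ is a symmetry. Your additional check that $\Mon(-)$ carries braided monoidal functors to braided monoidal functors via the faithful strict monoidal forgetful functors, and your remark that Remark~\ref{rmk:2} requires the symmetry, fill in exactly the routine details the paper leaves implicit.
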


In any braided monoidal category $\ca{V}$, the braiding allows us to define
\emph{opposite} monoids and comonoids. In contrast to
the case of a symmetric monoidal category (ie when $c_{X,Y}^{-1}=c_{Y,X}$),
there are two choices of opposite, one that employs $c$ and the other that
employs $c^{-1}$. If $(A,\iota,\mu)$ is a monoid, we denote by
$A^{\mathrm{op},c}$ and $A^{\mathrm{op},c^{-1}}$ the monoids with
multiplication $\mu\cdot c_{A,A}$ and $\mu\cdot c_{A,A}^{-1}$ respectively. Similarly, if
$(C,\varepsilon,\Delta)$ is a comonoid, we denote by $C^{\mathrm{cop},c}$ and
$C^{\mathrm{cop},c^{-1}}$ the comonoids with comultiplication
$c_{C,C}\cdot\Delta$ and $c_{C,C}^{-1}\cdot\Delta$. If $c$ is a symmetry,
we clearly only have $A^\op$ and $C^\mathrm{cop}$.
A monoid $A$ is then called \emph{commutative} if $A=A^\op$,
and dually for a \emph{cocommutative} comonoid $C=C^\mathrm{cop}$.

\begin{df}
\label{df:4}
\begin{enumerate}
\item\label{item:39}
A \emph{bimonoid} in a braided $\mathcal{V}$ is an object $B$ with a
monoid structure $(\iota,\mu)$ and a comonoid structure $(\varepsilon,\Delta)$
such that $\varepsilon\colon B\to I$ and $\Delta\colon B\to B\otimes B$
are monoid morphisms, where $B\otimes B$ is a monoid with the structure
described earlier.
\item\label{item:40}
An \emph{antipode} for a bimonoid
$(H,\iota,\mu,\varepsilon,\Delta)$ is a morphism $s\colon H\to H$ 
for which $\mu\cdot(s\otimes1_H)\cdot\Delta=\iota\cdot\varepsilon=
\mu\cdot(1_H\otimes s)\cdot\Delta$.
In other words, is an inverse for $1_H$ in
the convolution monoid induced by the bimonoid $H$. A bimonoid is called a
\emph{Hopf monoid} if it has an antipode.
\item\label{item:41}
An \emph{opantipode} is an inverse for $1_H$ in the convolution monoid induced by
the comonoid $H^{\mathrm{cop},c^{-1}}$ and the monoid $H$; see~\cite[\S 5.2]{MR2793022}.
The bimonoid $H$ is called an \emph{op-Hopf monoid} if it has an opantipode.
\item\label{item:42}
Finally, for any monoid $A$ in a monoidal category $\mathcal{V}$, we shall denote its
category of left modules by $\Mod_{\mathcal{V}}(A)$; its objects are pairs
$(M,\nu)$ where $M$ is an object of $\mathcal{V}$ and $\nu\colon A\otimes M\to
M$ is an action of $A$, by which we mean that it satisfies the usual module
axioms. Dually, we denote the category of left comodules over a comonoid $C$ by
$\Comod_{\mathcal{V}}(C)$.
\end{enumerate}
\end{df}

\subsection{Accessible and locally presentable monoidal categories}\label{admcats}

This section compiles some facts about accessible and locally presentable monoidal categories,
present in~\cite{MonComonBimon}, that will be useful later. We refer the reader
to~\cite{MR1031717} or \cite{LocallyPresentable} for the theory of accessible
and locally presentable categories. We limit ourselves here to mentioning only
a few facts. An object $X$ of a category $\mathcal{C}$ is
\emph{$\kappa$-presentable} if the representable $\mathcal{C}(X,-)$ preserves
$\kappa$-filtered colimits; here $\kappa$ is a regular cardinal. The category
$\mathcal{C}$ is \emph{$\kappa$-accessible} if it has $\kappa$-filtered colimits, there is, up to
isomorphism, a small set of $\kappa$-presentable objects, and each object of $\mathcal{C}$ is the
colimit of a $\kappa$-filtered diagram of $\kappa$-presentable objects. In
particular, $\kappa$-presentable objects form a small dense subcategory of
$\mathcal{C}$. If $\mathcal{C}$ is cocomplete, one says that it is \emph{locally
$\kappa$-presentable}. A category is \emph{accessible} or \emph{locally presentable} if it is
$\kappa$-accessible or locally $\kappa$-presentable for some regular cardinal
$\kappa$. Locally presentable categories are automatically complete.
Also, if $\ca{C}$ is accessible or locally presentable, than so is the functor
category $[\ca{A},\ca{C}]$ for any small $\ca{A}$.

A functor between $\kappa$-accessible categories is \emph{$\kappa$-accessible}
if it preserves $\kappa$-filtered colimits. A functor between two accessible
categories is said to be \emph{accessible} if it is $\kappa$-accessible for some
$\kappa$. This makes sense because given two (or a small set $L$ of) accessible
categories, there exists a regular cardinal $\kappa$ such that both (all the
elements of $L$) are
$\kappa$-accessible (see~\cite[2.4.9]{MR1031717}). Clearly, if $F$ is
$\kappa$-accessible and $\kappa\leq\mu$, then $F$ is $\mu$-accessible.

\begin{rmk}
\label{rmk:7}
The following two facts about functors between locally presentable categories
will be needed in later sections.
\begin{enumerate}
\item\label{item:7}
Any cocontinuous functor between locally presentable categories is a left
adjoint. In fact, by~\cite[5.33]{Kelly}, any cocontinuous functor between
cocomplete categories whose domain has a small dense category is a left
adjoint.
\item\label{item:8}
Any continuous accessible functor between locally presentable categories
is a right adjoint, by~\cite[Satz~14.6]{GabrielUlmer}.
\end{enumerate}
\end{rmk}

\begin{df}\label{def:accessible monoidal cat}
A \emph{$\kappa$-accessible monoidal category} is a monoidal
category $\mathcal{V}$ that satisfies the following:
it is a $\kappa$-accessible category; the
tensor product is a $\kappa$-accessible functor; the unit object is
$\kappa$-presentable; and, $\kappa$-presentable objects are closed under
the tensor product.
A \emph{locally presentable monoidal category} is an accessible monoidal
category that is cocomplete.
\end{df}

Locally presentable monoidal categories that are
symmetric are called \emph{admissible} monoidal categories
in~\cite{MonComonBimon}.
Examples include the monoidal category of modules over
a commutative ring; the monoidal category of chain complexes 
over a commutative
ring; any locally presentable cartesian closed category, such as for example
Grothendieck toposes.
In fact, the results obtained in the current paper require more relaxed conditions
on the base monoidal category $\ca{V}$, ie local presentability
of its underlying category, as we explain below.

Any monoidal biclosed structure $(\otimes,I,[-,-])$ on an accessible category
$\mathcal{V}$ is automatically an accessible monoidal category. Indeed, $I$ is
$\kappa$-presentable for some $\kappa$, by~\cite[2.3.12]{MR1031717}. If
$X$, $Y$ are $\kappa$-presentable objects, then so is $X\otimes Y$, since
$\mathcal{V}(X\otimes Y,-)\cong\mathcal{V}(X,[Y,-])$ is the composition of two
$\kappa$-accessible functors: $\mathcal{V}(X,-)$ by hypothesis, and $[Y,-]$
as a right adjoint~\cite[2.4.8]{MR1031717}. Clearly, the tensor product preserves
all colimits, as it has a right adjoint.

Later we will use the following easy lemma. Recall that a left adjoint $U\dashv
R$ is of \emph{codescent type} if the components $\eta_X\colon X\to RU(X)$ of
the unit are equalisers. If this is the case, $\eta_X$ is necessarily the
equaliser of the pair of morphisms $\eta_{RU(X)},RU\eta_X\colon
RU(X)\rightrightarrows RURU(X)$.
\begin{lem}
  \label{l:9}
  Let $U\dashv R\colon\mathcal{B}\to\mathcal{A}$ be an adjunction of
  codescent type between accessible categories. Then $X\in\mathcal{A}$ is
  presentable if and only if $U(X)\in\mathcal{B}$ is presentable. Furthermore,
  the presentability degree of $X$ is at least the maximum of presentability
  degree of $U(X)$ and the accessibility degree of $RU$.
\end{lem}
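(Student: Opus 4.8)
I would read the statement as an equivalence together with a degree estimate, and treat the two implications separately, holding the codescent hypothesis in reserve for the substantive direction. Throughout write $T=RU$ for the induced monad on $\mathcal A$, with unit $\eta$, and for a $\kappa$-filtered diagram $D\colon I\to\mathcal A$ write $c_i\colon D_i\to C=\colim_i D_i$ for the colimit injections and $d_{ij}\colon D_i\to D_j$ for the transition maps. The forward implication, that $U(X)$ is presentable whenever $X$ is, is the routine half. Since $U$ is a left adjoint it is cocontinuous, and its right adjoint $R$ between accessible categories is accessible, so I may pick a regular cardinal $\kappa$ for which $X$ is $\kappa$-presentable and $R$ is $\kappa$-accessible; the adjunction isomorphism $\mathcal B(U(X),-)\cong\mathcal A(X,R(-))$ then exhibits $\mathcal B(U(X),-)$ as a composite of two functors preserving $\kappa$-filtered colimits, so $U(X)$ is $\kappa$-presentable.

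The reverse implication carries both the content and the degree bound, and here I use the codescent hypothesis essentially. Fix $\kappa$ equal to the maximum of the presentability degree of $U(X)$ and the accessibility degree of $RU$, and let $C=\colim_i D_i$ be $\kappa$-filtered; the goal is that the canonical map $\colim_i\mathcal A(X,D_i)\to\mathcal A(X,C)$ be a bijection. For injectivity I would transport a parallel pair $h_1,h_2\colon X\to D_{i_0}$ with $c_{i_0}h_1=c_{i_0}h_2$ across $U$: as $U$ is cocontinuous and $U(X)$ is $\kappa$-presentable, the images $U(h_1),U(h_2)$ already agree after some transition $U(d_{i_0 j})$, and the faithfulness of $U$—which is exactly the assertion that each $\eta_X$ is monic, part of the codescent hypothesis—lifts this equality back to $\mathcal A$.

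Surjectivity is the heart. Given $f\colon X\to C$, I would first factor $U(f)$ through some $U(D_{i_0})$, using that $U(X)$ is $\kappa$-presentable and $U$ cocontinuous; transposing the factor $g\colon U(X)\to U(D_{i_0})$ gives $\tilde g=R(g)\,\eta_X\colon X\to RUD_{i_0}=TD_{i_0}$, and naturality of $\eta$ yields $Tc_{i_0}\,\tilde g=\eta_C f$. The decisive step is to descend $\tilde g$ along the equaliser $\eta_{D_j}\colon D_j\to TD_j$ of the pair $\eta_{TD_j},T\eta_{D_j}$ supplied by codescent. I would check that $\eta_{TD_{i_0}}\tilde g$ and $T\eta_{D_{i_0}}\tilde g$ become equal after the injection $T^2c_{i_0}$, which follows from $\eta_C f$ equalising the codescent pair at $C$ (it factors through $\eta_C$) together with naturality of $\eta$ at $c_{i_0}$. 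Since $T=RU$ is $\kappa$-accessible, so are $T^2$ and $UT^2$; applying $U$ to these two maps into $T^2D_{i_0}$ and using $\kappa$-presentability of $U(X)$ shows they agree after a finite transition $UT^2d_{i_0 j}$, and faithfulness of $U$ reflects this to $T^2d_{i_0 j}$. This forces $\tilde g_j:=Td_{i_0 j}\,\tilde g$ to equalise $\eta_{TD_j},T\eta_{D_j}$, hence to factor as $\eta_{D_j}h$ for a unique $h\colon X\to D_j$; a last check, using that $\eta_C$ is monic, gives $c_j h=f$.

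The main obstacle is precisely this descent. One cannot simply say that $X=\mathrm{eq}(TX\rightrightarrows T^2X)$ is a finite limit of presentable objects and conclude, because equalisers of presentable objects are not presentable in general. What rescues the argument is the special shape of the codescent fork—its parallel pair is built from unit components—so the crucial equalisation can be tested in $\mathcal B$, where $U(X)$ is presentable, and then reflected back through the faithful functor $U$; the only accessibility invoked is that of $T$, $T^2$ and $UT^2$, all controlled by the degree of $RU$, and the only presentability that of $U(X)$. Bookkeeping the cardinals through the two parts yields exactly the asserted estimate, namely that $X$ is $\kappa$-presentable for $\kappa$ the maximum of the presentability degree of $U(X)$ and the accessibility degree of $RU$.
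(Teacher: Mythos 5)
Your proof is correct and yields exactly the bound in the statement; the underlying idea is the same as the paper's (test the presentability of $X$ against $U(X)$ and $RU$ through the codescent fork), but your execution is genuinely more elementary. The paper's proof is functor-level and nearly immediate: applying $\mathcal{A}(X,-)$ to the codescent fork gives a natural equaliser $\mathcal{A}(X,Y)\rightarrowtail\mathcal{A}(X,RU(Y))\rightrightarrows\mathcal{A}(X,RURU(Y))$, the two right-hand functors are accessible of the relevant degrees (the first because $\mathcal{A}(X,RU-)\cong\mathcal{B}(U(X),U-)$ with $U$ cocontinuous and $U(X)$ presentable, the second using additionally the accessibility of $RU$), and one then cites the closure of accessible $\mathbf{Set}$-valued functors under limits, \cite[Prop.~2.4.5]{MR1031717}, which ultimately rests on finite limits commuting with filtered colimits in $\mathbf{Set}$. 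You never form this equaliser of functors; instead you verify bijectivity of the comparison map $\colim_i\mathcal{A}(X,D_i)\to\mathcal{A}(X,C)$ by hand: injectivity via faithfulness of $U$ (correctly extracted from codescent, since the unit components are regular monos, hence monic), and surjectivity by transposing a factorisation of $U(f)$, checking the required equalisation after $T^2c_{i_0}$ inside $\mathcal{B}$, where $U(X)$ is presentable, reflecting it back along the faithful $U$, and descending along the equaliser $\eta_{D_j}$. In effect you re-prove, for this particular fork, the closure result the paper cites, and your degree bookkeeping ($T$, $T^2$, $UT^2$ all $\kappa$-accessible for $\kappa$ the stated maximum) is sound. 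What the paper's route buys is brevity and reuse of standard theory; what yours buys is self-containedness and an explicit record of where each hypothesis enters, at the cost of length.
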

\begin{proof}
  The functor $\mathcal{B}(U(X),-)\cong\mathcal{A}(X,R-)$ is accessible if $X$
  is presentable, being the composition of the accessible functors
  $\mathcal{A}(X,-)$ and $R$, so the direct implication is obvious. For the converse,
  suppose that $U(X)$ is presentable, so $\mathcal{A}(X,R-)$ is
  accessible. There is an equaliser, natural in $Y$, by the hypothesis that the
  adjunction is of codescent type:
  \begin{equation}
    \label{eq:75}
    \mathcal{A}(X,Y)\rightarrowtail{}\mathcal{A}(X,RU(Y))\rightrightarrows
    \mathcal{A}(X,RURU(Y))
  \end{equation}
  Thus, $\mathcal{A}(X,-)$ is an equaliser of accessible functors
  $\mathcal{A}\to\mathbf{Set}$, and, therefore, it is $\kappa$-accessible by
  \cite[Prop.~2.4.5]{MR1031717}, where $\kappa$ can be taken as the maximum of
  the accessibility degree of $\mathcal{A}(X,-)$ and $RU$.
\end{proof}
\begin{cor}
  \label{cor:8}
  If $\mathsf{G}$ is an accessible comonad on an accessible category
  $\mathcal{C}$, then:
  \begin{enumerate*}
  \item \label{item:19} The category $\mathcal{C}^G$ of Eilenberg--Moore coalgebras
    is accessible.
  \item \label{item:20} The forgetful functor
    $U\colon\mathcal{C}^G\to\mathcal{C}$ is accessible.
  \item \label{item:21} The presentable $G$-coalgebras are those whose
    underlying object is presentable in $\mathcal{C}$. Furthermore, the
    presentability degree of a $G$-coalgebra $M$ is at least the maximum of the
    presentability degree of $U(M)$ and the accessibility degree of $G$.
  \end{enumerate*}
\end{cor}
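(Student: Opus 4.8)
The plan is to realise $\mathcal{C}^G$ through its cofree--forgetful adjunction and to feed this into Lemma~\ref{l:9}. Write $U\colon\mathcal{C}^G\to\mathcal{C}$ for the forgetful functor and $R\colon\mathcal{C}\to\mathcal{C}^G$, $R(X)=(GX,\delta_X)$, for the cofree coalgebra functor, so that $U\dashv R$. The key structural observation is that this adjunction is of \emph{codescent type}: the unit at a coalgebra $(M,\gamma)$ is the coaction $\gamma\colon(M,\gamma)\to(GM,\delta_M)=RU(M,\gamma)$, and it is the equaliser in $\mathcal{C}^G$ of the pair $\delta_M,G\gamma\colon(GM,\delta_M)\rightrightarrows(GGM,\delta_{GM})$. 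Its underlying diagram in $\mathcal{C}$ is the split, hence absolute, equaliser furnished by the comonad counit, and a short diagram chase from coassociativity and counitality verifies the universal property in $\mathcal{C}^G$. Thus the hypothesis of Lemma~\ref{l:9} becomes available as soon as accessibility is in place.

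First I would establish item~(1). Since $G$ is $\kappa$-accessible for some $\kappa$, the forgetful functor $U$ creates $\kappa$-filtered colimits, so $\mathcal{C}^G$ has them and $U$ preserves them. For full accessibility I would exhibit $\mathcal{C}^G$ as a PIE-limit of accessible categories along accessible functors: it is the inserter of $\id_{\mathcal{C}},G\colon\mathcal{C}\rightrightarrows\mathcal{C}$, cut down by the two equifiers that impose the counit and coassociativity laws on the generic coaction. Since the $2$-category $\mathbf{Acc}$ of accessible categories and accessible functors is closed under such limits \cite{MR1031717}, the category $\mathcal{C}^G$ is accessible, which is item~(1).

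Item~(2) is then immediate: $U$ is a left adjoint, so it preserves all colimits; once $\mathcal{C}^G$ and $\mathcal{C}$ are simultaneously $\lambda$-accessible it preserves $\lambda$-filtered colimits and is therefore accessible (equivalently, $U$ creates $\kappa$-filtered colimits). For item~(3) I would apply Lemma~\ref{l:9} to $U\dashv R$, taking $\mathcal{A}=\mathcal{C}^G$ and $\mathcal{B}=\mathcal{C}$: a coalgebra $(M,\gamma)$ is presentable if and only if its carrier $M=U(M,\gamma)$ is presentable in $\mathcal{C}$. The degree bound then follows once I identify the accessibility degree of $RU$ with that of $G$; this holds because $U$ is conservative and creates $\lambda$-filtered colimits whenever $G$ is $\lambda$-accessible, so $RU\colon(M,\gamma)\mapsto(GM,\delta)$ preserves a $\lambda$-filtered colimit exactly when $G$ does. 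Substituting into the conclusion of Lemma~\ref{l:9} yields the stated bound in item~(3).

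The main obstacle is item~(1): accessibility of $\mathcal{C}^G$ is the one non-formal input, and the work lies in making the PIE-limit presentation precise and checking that the cited closure of $\mathbf{Acc}$ genuinely applies to the inserter and equifiers involved. A more self-contained alternative would build a small dense subcategory directly from the cofree coalgebras $R(P)$ on $\kappa$-presentable objects $P$ and the absolute equalisers above; the delicate point there is that a presentation $M\cong\colim_i M_i$ of a carrier by presentable objects need not lift to coalgebra structures, so one must instead use the split-equaliser description of $(M,\gamma)$ to realise it as a $\kappa$-filtered colimit of coalgebras with presentable carrier.
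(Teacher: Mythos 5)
Your proposal is correct and follows essentially the same route as the paper: items~(1) and~(2) come from the closure of accessible categories under inserters and equifiers (the paper simply cites \cite[5.1.6]{MR1031717}), and item~(3) is Lemma~\ref{l:9} applied to the cofree--forgetful adjunction, using exactly the observation that comonadic adjunctions are of codescent type. Your expansion of the codescent-type verification (the coaction as a split equaliser) and of the identification of the accessibility degree of $RU$ with that of $G$ just makes explicit what the paper leaves implicit.
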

\begin{proof}
  The first two claims hold by~\cite[5.1.6]{MR1031717}, while the last is an
  instance of Lemma~\ref{l:9}, as comonadic functors are of codescent type.
\end{proof}

Compare the following result with \cite[\S 2]{MonComonBimon}.
\begin{prop}
  \label{moncomonadm}
  Suppose $\ca{V}$ is an accessible (resp. locally presentable) monoidal category. Then both
  $\Mon(\ca{V})$ and $\Comon(\ca{V})$ are accessible (resp. locally presentable)
  categories, and
  the respective forgetful functors are accessible.
\end{prop}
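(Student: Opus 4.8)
The plan is to obtain accessibility by a purely formal construction of $\Mon(\ca{V})$ and $\Comon(\ca{V})$ as iterated \emph{inserters} and \emph{equifiers} of accessible functors, and then to upgrade to local presentability using the (co)completeness created by the forgetful functors. The key external input is the closure of the $2$-category of accessible categories and accessible functors under PIE-limits (products, inserters, equifiers): such limits of accessible categories are again accessible, and the universal projection or inclusion functors are accessible; see \cite{MR1031717,LocallyPresentable}. I also use the standard fact that an accessible category is locally presentable as soon as it is complete, equivalently cocomplete \cite{LocallyPresentable} (the excerpt already records the easy converse).

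First I would build $\Mon(\ca{V})$ in stages inside $\ca{V}$. The functors $A\mapsto A\otimes A$ and $A\mapsto A\otimes A\otimes A$ are accessible, being composites of the (cocontinuous, hence accessible) diagonal with the $\kappa$-accessible tensor; and the constant functor at the $\kappa$-presentable unit $I$ is accessible as well, since it preserves connected, in particular $\kappa$-filtered, colimits. Starting from $\ca{V}$, form the inserter of $A\mapsto A\otimes A$ and $\id_{\ca{V}}$: its objects are pairs $(A,\mu\colon A\otimes A\to A)$ and its morphisms are precisely the maps commuting with $\mu$. On this category form the inserter of the constant functor at $I$ and the projection to $\ca{V}$, adjoining the unit $\iota\colon I\to A$. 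Finally impose the three monoid axioms as equifiers: associativity equifies the two natural transformations $\mu\cdot(\mu\otimes1)$ and $\mu\cdot(1\otimes\mu)$ from $A\otimes A\otimes A$ to $A$, while the two unit laws equify $\mu\cdot(\iota\otimes1)$ and $\mu\cdot(1\otimes\iota)$ against the identity. Each stage is accessible with accessible projection, so $\Mon(\ca{V})$ is accessible and the forgetful functor, being the composite of these projections, is accessible. The construction of $\Comon(\ca{V})$ is entirely parallel and, crucially, is carried out directly in $\ca{V}$ (not in $\ca{V}^{\op}$, which need not be accessible): one inserts a comultiplication $\Delta\colon C\to C\otimes C$ and a counit $\varepsilon\colon C\to I$, then equifies coassociativity and the two counit laws.

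For the locally presentable case it then suffices, by the characterisation above, to exhibit one of completeness or cocompleteness. The forgetful functor $\Mon(\ca{V})\to\ca{V}$ creates limits: a monoid structure lifts to a limit $\lim A_i$ because the morphisms $(\lim A_i)\otimes(\lim A_i)\xrightarrow{\pi_j\otimes\pi_j}A_j\otimes A_j\xrightarrow{\mu_j}A_j$ are compatible and hence factor through $\lim A_i$ by its universal property; note this uses only the universal property of the limit in the codomain, not any preservation of limits by $\otimes$. Thus $\Mon(\ca{V})$ is complete, and therefore locally presentable, whenever $\ca{V}$ is. Dually, $\Comon(\ca{V})\to\ca{V}$ creates colimits: the maps $C_j\xrightarrow{\Delta_j}C_j\otimes C_j\xrightarrow{\kappa_j\otimes\kappa_j}(\colim C_i)\otimes(\colim C_i)$ form a cocone and induce a comultiplication on $\colim C_i$ by the universal property of the colimit in the domain. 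Hence $\Comon(\ca{V})$ is cocomplete, and therefore locally presentable.

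The main obstacle, and really the only nonformal point, is the closure of accessible categories under inserters and equifiers together with the accessibility of the associated forgetful functors; everything downstream is bookkeeping. A subsidiary point that must be checked carefully is that the (co)monoid axioms genuinely present as equifiers of \emph{accessible natural transformations}: this hinges on naturality with respect to the morphisms of the intermediate inserter categories, which is automatic since those morphisms are by construction compatible with the structure maps, and on the accessibility of the iterated tensor-power functors. One should also fix a single regular cardinal $\kappa$ for which $\ca{V}$ is $\kappa$-accessible, $I$ is $\kappa$-presentable, and all the finitely many functors involved are $\kappa$-accessible, which is possible as only finitely many occur.
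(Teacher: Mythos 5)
Your proposal is correct and follows essentially the same route as the paper's proof: the paper likewise constructs $\Mon(\mathcal{V})$ and $\Comon(\mathcal{V})$ from $\mathcal{V}$ using products, inserters and equifiers, invokes the closure of accessible categories and accessible functors under such limits (via \cite[5.1.6]{MR1031717}) to get accessibility of both categories and of the forgetful functors, and then obtains local presentability from the completeness of $\Mon(\mathcal{V})$ and the cocompleteness of $\Comon(\mathcal{V})$. Your write-up simply makes explicit what the paper leaves implicit, namely the individual inserter/equifier stages and the creation of limits (resp.\ colimits) by the forgetful functor from monoids (resp.\ comonoids).
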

\begin{proof}
  Both the category of monoids and comonoids can be constructed from
  $\mathcal{V}$ by using products, inserters and equifiers;
  see~\cite{2categoricallimits} for a description of these limits. Then,
  \cite[5.1.6]{MR1031717} implies that the categories of monoids and comonoids,
  and the respective forgetful functors, are accessible. Now suppose that
  $\mathcal{V}$ is locally presentable. In that case,
  $\Comon(\mathcal{V})$ is cocomplete and $\Mon(\mathcal{V})$ is complete,
  and therefore both are locally presentable (see~\cite[6.1.4]{MR1031717}).
\end{proof}

As an application, consider the category
$\Comon(\ca{V})$ for a locally presentable braided monoidal closed category
$\ca{V}$. Then the functor $(-\otimes C)$ with domain a locally presentable
category is cocontinuous, by the commutative
diagram below, thus it has a right adjoint by Remark \ref{rmk:7}.
The same argument holds for $(C\otimes-)$, so $\Comon(\mathcal{V})$ is a locally
presentable monoidal biclosed category (see also~\cite[3.2]{MonComonBimon}).
\[
\xymatrix@R=.5cm{\mathbf{Comon}(\ca{V})\ar[r]^-{-\otimes
C}\ar[d]_-U & \mathbf{Comon}(\ca{V})\ar[d]^-U\\
\ca{V}\ar[r]^-{-\otimes UC} & \ca{V}}
\]

\begin{ex}
  \label{ex:14}
  The category $\mathbf{gVect}_{\mathbb{Z}}$ of $\mathbb{Z}$-\emph{graded} $k$-vector spaces, being
  equivalent to the category of functors from the discrete category $\mathbb{Z}$
  into $\mathbf{Vect}$, is locally finitely presentable. Furthermore, it is
  locally finitely presentable as a monoidal category, with the tensor product
  $(V\otimes W)_n=\sum_{n=i+j}V_i\otimes W_j$ and unit $I$ equal to $k$
  concentrated in degree $0$. There is a symmetry on $\mathbf{gVect}_{\mathbb{Z}}$,
  given on homogeneous elements $x\in V_a$, $y\in W_b$ by $s_{V,W}(x\otimes y)
  = (-1)^{ab}y\otimes x$. The internal
  hom is $[V,W]_n=\prod_i\mathrm{Hom}_k(V_i,W_{i+n})$. The category
  $\mathbf{gVect}_{\mathbb{N}}$ of $\mathbb{N}$-graded $k$-vector spaces is a
  locally presentable monoidal subcategory of $\mathbf{gVect}_{\mathbb{Z}}$.
\end{ex}
\begin{ex}
  \label{ex:13}
  Let $\mathbf{dgVect}_{\mathbb{Z}}$ be the category of \emph{chain
  complexes} of vector spaces, or \emph{differential graded} vector spaces.
  This is a locally finitely
  presentable category; the finitely presentable objects are the bounded chain complexes of
  finite-dimensional vector spaces.
  There exists an obvious forgetful functor
  $\mathbf{dgVect}_{\mathbb{Z}}\to\mathbf{gVect}_{\mathbb{Z}}$ that forgets the differential,
  which preserves limits and colimits and is conservative. It is a
  classical fact that this is a symmetric monoidal closed category and the said
  forgetful functor is strict monoidal; in other words, if $V$ and $W$ are dg vector
  spaces, then their graded tensor product
  can be equipped with differentials, which are compatible
  with the relevant natural transformations: the associativity and unit
  constraints and the symmetry. Explicit formulas for these differentials can be found in any
  homological algebra textbook.

  The full monoidal
  subcategory $\mathbf{dgVect}_{\mathbb{N}}$ of non-negatively graded chain
  complexes is locally presentable too.
\end{ex}

\subsection{Enriched categories}
\label{sec:enriched-categories}
It might be helpful to recall the definitions of enriched categories,
functors, and so on, that will be used in the article. We only give an outline;
detailed definitions can be found in~\cite{Kelly}. The base of enrichent will be
a monoidal category $(\mathcal{V},I,\otimes)$, that in many instances will be
assumed to be braided, closed, cocomplete or even finitely presentable. A
$\mathcal{V}$-category $\mathcal{C}$ consists of objects $X,Y$, etc., and
objects $\mathcal{C}(X,Y)$ of $\mathcal{V}$, for each pair of object $X,Y$. It
is equipped with composition morphisms
$\mathcal{C}(Y,Z)\otimes\mathcal{C}(X,Y)\to\mathcal{C}(X,Z)$ and identity
morphisms $I\to \mathcal{C}(X,X)$ that satisfy associativity and identity
axioms. A $\mathcal{V}$-functor $F\colon \mathcal{C}\to\mathcal{D}$ sends
objects of $\mathcal{C}$ to objects of $\mathcal{D}$, and is given on enriched
homs by morphisms $F_{X,Y}\colon \mathcal{C}(X,Y)\to\mathcal{D}(FX,FY)$ in
$\mathcal{V}$, that are compatible with composition and identities. A
$\mathcal{V}$-natural transformation $\tau$ from $F$ to another
$\mathcal{V}$-functor $G$ consists of a family of morphisms $\tau_{X}\colon
I\to\mathcal{D}(FX,GX)$ that satisfy naturality axioms.

When the monoidal category $\mathcal{V}$ has a braiding $c_{X,Y}\colon X\otimes
Y\to Y\otimes X$, one can consider the tensor product of two
$\mathcal{V}$-categories $\mathcal{C}$ and $\mathcal{D}$. This is a
$\mathcal{V}$-category $\mathcal{C}\otimes\mathcal{D}$ with objects
$\mathrm{ob}\mathcal{C}\times\mathrm{ob}\mathcal{D}$, and enriched homs
\begin{equation}
  \label{eq:11}
  (\mathcal{C}\otimes \mathcal{D})
  \bigl(
  (C,D),(C',D')
  \bigr)
  =\mathcal{C}(C,C')\otimes \mathcal{D}(D,D').
\end{equation}
The braiding is used to define the composition of $\mathcal{C}\otimes
\mathcal{D}$ by
\begin{multline}
  \label{eq:12}
  \mathcal{C}(C',C'')\otimes\mathcal{D}(D',D'')\otimes\mathcal{C}(C,C') \otimes
  \mathcal{D}(D,D')
  \xrightarrow{1\otimes c\otimes 1}\\
  \longrightarrow 
  \mathcal{C}(C',C'')\otimes \mathcal{C}(C,C') \otimes\mathcal{D}(D',D'')\otimes
  \mathcal{D}(D,D')
  \longrightarrow \mathcal{C}(C,C'')\otimes\mathcal{D}(D,D'').
\end{multline}

\subsection{Kleisli categories}\label{Kleislicats}
In this section, we will describe some known facts regarding
Kleisli categories for monoidal and enriched monads. We gather these facts here,
in order to refer to them later.

In general, if $(\mathsf{T},\eta,\mu)$ is a monad on an ordinary category $\mathcal{V}$,
its Kleisli category -- denoted by $\mathcal{V}_{\mathsf{T}}$ or
$\operatorname{Kl}(\mathsf{T})$ -- has the same objects as
$\mathcal{V}$ and homs $\operatorname{Kl}(\mathsf{T})(X,Y)=\mathcal{V}(X,TY)$;
the composition uses the multiplication $\mu$ of $\mathsf{T}$ and the identity
morphism of an object $X$ is the unit $\eta_X\colon X\to \mathsf{T}X$;
for more details see~\cite[\S VI.5]{MacLane}. There is a bijective on objects functor
$F_{\mathsf{T}}\colon\mathcal{V}\to\operatorname{Kl}(\mathsf{T})$ that sends a
morphism $f\colon X\to Y$ to $\eta_{Y}\cdot f$.

If $\mathcal{V}$ is a monoidal category and $(\mathsf{T},\eta,\mu)$ has a monoidal
monad structure, ie is a (lax) monoidal endofunctor on $\ca{V}$ with
$\eta$, $\mu$ monoidal, then $\operatorname{Kl}(\mathsf{T})$
carries a monoidal structure that makes $F_{\mathsf{T}}$ a strict monoidal functor;
in other words, we can tensor objects of the Kleisli category as we do in $\mathcal{V}$.
If $\mathcal{V}$ has a braiding $c$ and the functor $T$ is braided monoidal, then there exists a
braiding on $\operatorname{Kl}(\mathsf{T})$ that makes $F_{\mathsf{T}}$ a
braided monoidal functor.

We will now consider $\mathcal{V}$-enriched monads on a braided monoidal closed
category $\mathcal{V}$. In other words, $\mathcal{V}$ is regarded as a
$\mathcal{V}$-category, with enriched hom-objects $[A,B]$. A $\mathcal{V}$-monad
$\mathsf{T}=({T},\eta,\mu)$ on $\mathcal{V}$ consists of a endo-$\mathcal{V}$-functor $T$
and unit $\eta$ and multiplication $\mu$ that are $\mathcal{V}$-natural
transformations, and that form a monad on the ordinary category
$\mathcal{V}$. The Kleisli
$\mathcal{V}$-category $\mathcal{V}_\mathsf{T}$ of $\mathsf{T}$ has the same
objects as $\mathcal{V}$,
and enriched homs $\mathcal{V}_\mathsf{T}(X,Y)=[X,\mathsf{T}Y]$. Composition and
identities are given by
\begin{gather}
  \label{eq:54}
  [Y,TZ]\otimes[X,TY]\xrightarrow{T\otimes1}[TY,T^2Z]\otimes[X,TY]
  \xrightarrow{\mathrm{comp}}
  [X,T^2Z]\xrightarrow{[X,\mu_Z]}[X,TZ]\\
  I\xrightarrow{\mathrm{id}}[X,X]\xrightarrow{[X,\eta_X]}[X,TX].
\end{gather}
On the other hand, the $\mathcal{V}$-category of $\mathsf{T}$-algebras,
denoted by $\mathcal{V}^\mathsf{T}$, has objects the usual
$\mathsf{T}$-algebras, and  enriched homs $\mathcal{V}^T((A,a),(B,b))$
the equaliser of the morphisms
\begin{equation}
  \label{eq:55}
  [A,B]\xrightarrow{T}[TA,TB]\xrightarrow{[TA,b]}[TA,B]\quad\text{and}\quad
  [A,B]\xrightarrow{[a,B]}[TA,B]
\end{equation}
with composition induced by that of the $\mathcal{V}$-category
$\mathcal{V}$. There is a full and faithful ``comparison'' $\mathcal{V}$-functor
\begin{equation}
  K\colon \mathcal{V}_\mathsf{T}\longrightarrow\mathcal{V}^\mathsf{T}\label{eq:76}
\end{equation}
given on objects by $X\mapsto\mathsf{T}X$ and on
homs by the isos $[X,TY]\cong\mathcal{V}^T(TX,TY)$ induced by the the
morphisms
\begin{equation}
  \label{eq:57}
  [X,TY]\xrightarrow{T}[TX,T^2Y]\xrightarrow{[TX,\mu_Y]}[TX,TY].
\end{equation}
As is the case for any $\mathcal{V}$-functor, $K$ gives monoid morphisms
between endo-homs
\begin{equation}
  \mathcal{V}_T(X,X)=[X,TX]\cong\mathcal{V}^T(TX,TX)\rightarrowtail{}[TX,TX];
  \label{eq:58}
\end{equation}
the multiplication is composition, which in the case of $\mathcal{V}_T(X,X)$
was described in~\eqref{eq:54}.

\section{Actions of monoidal categories and enrichment}\label{actions}
Recall that a \emph{left action} of a monoidal category
$\ca{V}={(\ca{V},\otimes,I,a,l,r)}$ on a category $\ca{D}$ is given
by a functor $*:\ca{V}\times\ca{D}\to\ca{D}$,
a natural isomorphism with components
$\alpha_{XYD}\colon X*(Y*D)\xrightarrow{\sim}(X\otimes Y)*D$
and a natural isomorphism with components
${\lambda}_D :I*D\xrightarrow{\sim} D$,
satisfying the commutativity of diagrams similar to those of a monoidal category;
see~\cite[\S 1]{AnoteonActions} for a full description. Another way of describing a
left action of $\mathcal{V}$ is by a strong monoidal functor
$\mathcal{V}\to\mathrm{End}(\mathcal{D})$ into the strict monoidal category of
endofunctors of $\mathcal{D}$, whose tensor product is given by composition.

The most important fact here for us, explained in detail in~\cite{AnoteonActions},
is that to give a category $\ca{D}$ and a left action of a monoidal
category $\ca{V}$ with a right adjoint for each $(-*D)$ is to give
a $\ca{V}$-category $\ca{D}$.

\begin{prop}
  \label{actionenrich}
  Suppose $*\colon \mathcal{V}\times \mathcal{D}\to\mathcal{D}$ is a left
  action of the monoidal category $\mathcal{V}$ with the property that $(-*D)$
  has a right adjoint $H(D,-)$, for each $D\in\mathcal{D}$;
  \begin{equation}
    \label{eq:18}
    \mathcal{D}(X*D,E)\cong\mathcal{V}(X,H(D,E)).
  \end{equation}
  Then, there exists a
  $\mathcal{V}$-enriched category $\underline{\mathcal{D}}$ with underlying
  category $\mathcal{D}$ and hom-objects $\underline{\mathcal{D}}(D,E)=H(D,E)$.
When $\mathcal{V}$ is left closed, this establishes an equivalence between left
actions of $\mathcal{V}$ and tensored $\mathcal{V}$-categories.
\end{prop}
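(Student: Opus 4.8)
The plan is to transport the categorical structure of $\mathcal{D}$ across the adjunctions \eqref{eq:18}, turning their counits into an enriched composition and the unit constraint $\lambda$ into enriched identities, and then to read the tensoring isomorphism off the associativity constraint $\alpha$ of the action. Write $\ev_{D,E}\colon H(D,E)*D\to E$ for the counit of $(-*D)\dashv H(D,-)$, so that transposing a morphism $f\colon X*D\to E$ across \eqref{eq:18} produces the unique $\hat f\colon X\to H(D,E)$ with $\ev_{D,E}\cdot(\hat f * D)=f$. All the enriched data will be defined as such transposes, and the axioms will be checked by transposing them back and invoking the coherence of the action together with the triangle identities.

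Explicitly, I would define the \emph{identity} $j_D\colon I\to H(D,D)$ to be the transpose of $\lambda_D\colon I*D\to D$, and the \emph{composition} $m\colon H(E,F)\otimes H(D,E)\to H(D,F)$ to be the transpose across \eqref{eq:18} of the composite
\[
\bigl(H(E,F)\otimes H(D,E)\bigr)*D
\xrightarrow{\;\alpha^{-1}\;} H(E,F)*\bigl(H(D,E)*D\bigr)
\xrightarrow{\;1*\ev_{D,E}\;} H(E,F)*E
\xrightarrow{\;\ev_{E,F}\;} F.
\]
The verification that $(H(D,E),m,j)$ is a $\mathcal{V}$-category splits into the identity axioms, which follow from the triangle coherence relating $\alpha$ and $\lambda$ together with one triangle identity for the adjunction, and the associativity axiom. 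I would also check that the underlying ordinary category of $\underline{\mathcal{D}}$ recovers $\mathcal{D}$: on hom-sets this is the chain $\mathcal{V}(I,H(D,E))\cong\mathcal{D}(I*D,E)\cong\mathcal{D}(D,E)$ induced by $\lambda_D$, and the definitions of $j$ and $m$ are engineered precisely so that identities and composites match under this bijection.

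For the tensoring clause, assume $\mathcal{V}$ is left closed. For fixed $X,D$ and varying $Y$ I would compute, using associativity of the action and the closed structure,
\[
\mathcal{V}\bigl(Y,H(X*D,E)\bigr)\cong\mathcal{D}\bigl(Y*(X*D),E\bigr)
\cong\mathcal{D}\bigl((Y\otimes X)*D,E\bigr)
\cong\mathcal{V}\bigl(Y\otimes X,H(D,E)\bigr)
\cong\mathcal{V}\bigl(Y,[X,H(D,E)]\bigr),
\]
so that $H(X*D,E)\cong[X,H(D,E)]$ by Yoneda; checking this isomorphism is $\mathcal{V}$-natural in $E$ exhibits $\underline{\mathcal{D}}$ as tensored with copowers given by $*$. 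Conversely, from a tensored $\mathcal{V}$-category $\mathcal{C}$ I would take $*$ to be the copower functor on the underlying category; the constraints $\alpha$ and $\lambda$ then arise from the canonical isomorphisms $[X\otimes Y,-]\cong[X,[Y,-]]$ and $[I,-]\cong\id$ via Yoneda, and their coherence reduces to that of the monoidal structure of $[-,-]$. One then checks the two passages are mutually inverse, which is the known correspondence of~\cite{AnoteonActions}.

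The step I expect to be the main obstacle is the associativity axiom for $m$: transposing $m\cdot(m\otimes 1)$ and $m\cdot(1\otimes m)\cdot a$ back across \eqref{eq:18} yields two composites of copies of $\ev$ and $\alpha^{\pm1}$ applied to $\bigl(H(F,G)\otimes H(E,F)\otimes H(D,E)\bigr)*D$, and matching them is exactly a pentagon coherence for $\alpha$ combined with naturality of $\ev$. Getting the bookkeeping of the several $\alpha$'s correct, rather than any conceptual difficulty, is where the work lies; the remaining subtlety is ensuring the tensoring isomorphism is genuinely $\mathcal{V}$-natural so that the resulting correspondence is an equivalence and not merely a bijection on structures.
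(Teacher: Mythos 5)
Your proposal is correct and follows essentially the same route as the paper, which defines the enriched composition and identities as transposes of $\ev$ and $\lambda$ across the adjunction~\eqref{eq:18} and leaves the verification of the axioms (via the action coherence and naturality) as a routine check, citing the Janelidze--Kelly reference for the full equivalence with tensored $\mathcal{V}$-categories. Your fleshed-out details, including the correct use of $\alpha^{-1}$ in the definition of composition and the Yoneda argument giving $H(X*D,E)\cong[X,H(D,E)]$, are exactly the content the paper compresses into its proof sketch.
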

The proof of the existence of the composition
$H(B,C)\otimes H(A,B)\to H(A,C)$ and the identity morphisms
$I\to H(A,A)$ satisfying the usual axioms of
enriched categories is easily deduced from the correspondence of
arrows under the adjunction~(\ref{eq:18}) and the
action axioms.

Moreover, when the monoidal
category $\ca{V}$ is symmetric, then the opposite of a $\mathcal{V}$-category
can be defined in the usual way, so we have that
$\ca{D}^\mathrm{op}$
is also enriched in $\ca{V}$, with the same objects and
hom-objects
$\ca{D}^\mathrm{op}(B,A)=\ca{D}(A,B)$. Notice that if $\mathcal{V}$ is braided but not
symmetric, there are two different choices of opposite $\mathcal{V}$-category,
one using the braiding and the other using its inverse.


\begin{rmk}
  The statement of Proposition~\ref{actionenrich} mentions tensored
  $\mathcal{V}$-categories over a left closed monoidal category $\mathcal{V}$.
  These are $\mathcal{V}$-categories $\mathcal{C}$ for with
  $\mathcal{V}$-natural isomorphisms
  $\mathcal{C}(X*C,D)\cong[X,\mathcal{C}(C,D)]$ where $[-,-]$ is the
  left internal hom of $\mathcal{V}$, $X\in\mathcal{V}$ and $C,D\in\mathcal{C}$.
  \label{rmk:4}
\end{rmk}

\begin{ex}
  In addition to the tensored $\mathcal{V}$-categories mentioned above,
  examples of actions of a monoidal left closed category $\mathcal{V}$ are
  provided by the cotensored $\mathcal{V}$-categories, or rather, by a
  $\mathcal{V}$-categories $\mathcal{A}$ with chosen cotensor products.
  Dually to tensors, a cotensor
  product of $A\in \mathcal{A}$ by $X\in\mathcal{V}$ is an object
  $\{X,A\}\in\mathcal{A}$ with a morphism
  $X\to\mathcal{A}(\{X,A\},A)$
  that induces isomorphisms
  $\mathcal{A}(B,\{X,A\})\cong[X,\mathcal{A}(B,A)]$. Then, the canonical
  isomorphisms $\{X,\{Y,A\}\}\cong\{X\otimes Y,A\}$ and $\{I,A\}\cong A$ make the
  functor $\{-,-\}^{\mathrm{op}}$ into a left action of $\mathcal{V}$ on
  $\mathcal{A}_\circ^{\mathrm{op}}$, the opposite of the underlying category of
  $\mathcal{A}$. For example, the left internal hom $[-,-]^\op$ is a left action of $\mathcal{V}$ on
  $\mathcal{V}^{\mathrm{op}}$.\label{ex:1}
\end{ex}

In Theorem~\ref{thisthm} we shall give a monoidal version of
Proposition~\ref{actionenrich}, but before that we need the following easy
theorem. First recall that given a braided monoidal category
$\mathcal{V}$, a $\mathcal{V}$-\emph{enriched monoidal structure} on a
$\mathcal{V}$-category $\mathcal{A}$ consists of a $\mathcal{V}$-functor
$\otimes\colon \mathcal{A}\otimes\mathcal{A}\to\mathcal{A}$, an object $I\in
\mathcal{A}$ and $\mathcal{V}$-natural isomorphisms $(X\otimes Y)\otimes Z\cong
X\otimes(Y\otimes Z)$, $I\otimes X\cong X\cong X\otimes I$, such that the
underlying functor $\otimes_{\circ}$ together with $I$ and these isomorphisms
form a monoidal structure on the ordinary category $\mathcal{A}_{\circ}$. One
says that $\mathcal{A}$ is a \emph{monoidal $\mathcal{V}$-category}.
It is not hard to see how this definition establishes the following equivalence.

\begin{thm}\label{mainthm}
Let $\ca{V}$ be a braided monoidal category. Suppose $\ca{A}$ is a
$\ca{V}$-category equipped with a $\mathcal{V}$-functor
$T\colon\mathcal{A}\otimes\mathcal{A}\to\mathcal{A}$ and an object $J$.
There is a bijection between:
\begin{enumerate}
\item \label{item:4} Monoidal $\ca{V}$-category structures on $\ca{A}$ with
  tensor product $T$ and unit $J$;
\item \label{item:5} Extensions of $(\mathcal{A}_\circ,T_\circ,J)$ to a monoidal
  category such that the morphisms $T_{ABCD}\colon
  \mathcal{A}(A,C)\otimes\mathcal{A}(B,D)\to\mathcal{A}(T(A,B),T(C,D))$ and
  $\mathrm{id}\colon I\to \mathcal{A}(J,J)$ make the enriched-hom functor
  ${\ca{A}}(-,-):\ca{A}_\circ^\op\times\ca{A}_\circ\to\ca{V}$ into a monoidal
  functor.
\end{enumerate}
Furthermore, if $\mathcal{A}_0$ is braided, with braiding $c$, the monoidal
$\mathcal{V}$-category of~(\ref{item:4}) is braided with braiding $c$ if and
only if ${\mathcal{A}}(-,-)$ is a braided monoidal functor from
$(\mathcal{A}_\circ^{\mathrm{op}}\times\mathcal{A}_\circ,c^{-1}\times c)$ to
$(\mathcal{A},c)$.
\end{thm}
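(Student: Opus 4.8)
The plan is to prove the bijection by unwinding both descriptions into the same underlying data and then matching their axioms one against the other. On both sides the ordinary monoidal category $\mathcal{A}_\circ$ is the common ingredient: by definition a monoidal $\mathcal{V}$-category structure as in~(\ref{item:4}) \emph{contains} an ordinary monoidal structure on $\mathcal{A}_\circ$ with tensor $T_\circ$, unit $J$ and constraints $a,l,r$, and this is exactly the extension of $(\mathcal{A}_\circ,T_\circ,J)$ requested in~(\ref{item:5}). So the correspondence sends a monoidal $\mathcal{V}$-category to its underlying monoidal category, and the real content of the theorem is that the remaining enriched data—the $\mathcal{V}$-naturality of $a$, $l$ and $r$—is equivalent to the assertion that $\mathcal{A}(-,-)$ is a monoidal functor.

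First I would set up the dictionary between structure maps. Because $T$ is a $\mathcal{V}$-functor, its action on hom-objects provides morphisms
\[
(\mathcal{A}\otimes\mathcal{A})\bigl((A,B),(C,D)\bigr)=\mathcal{A}(A,C)\otimes\mathcal{A}(B,D)\xrightarrow{\;T_{ABCD}\;}\mathcal{A}(T(A,B),T(C,D)),
\]
and these are precisely the candidate constraint for $\Phi=\mathcal{A}(-,-)$, since the unit object of $\mathcal{A}_\circ^\op\times\mathcal{A}_\circ$ is $(J,J)$ and its monoidal product is computed by $T$. The candidate unit constraint is the element $\mathrm{id}\colon I\to\mathcal{A}(J,J)$. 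Thus the \emph{data} of a monoidal-functor structure on $\mathcal{A}(-,-)$ already coincides with $(T_{ABCD},\mathrm{id})$; no choices remain, and only the axioms must be verified.

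The heart of the argument is the observation that each coherence axiom of the monoidal functor $\mathcal{A}(-,-)$ translates into a $\mathcal{V}$-naturality condition. Concretely, I would write out the associativity coherence square of $\Phi$ on a triple of objects of $\mathcal{A}_\circ^\op\times\mathcal{A}_\circ$; since the associator of the domain is assembled from $a$ (and $a^{-1}$ in the contravariant slot) while that of the codomain is the associator of $\mathcal{V}$, the square becomes, after absorbing $T_{ABCD}$ on each side, exactly the hexagon expressing $\mathcal{V}$-naturality of $a\colon T(T(-,-),-)\Rightarrow T(-,T(-,-))$ between the two evident $\mathcal{V}$-functors $\mathcal{A}\otimes\mathcal{A}\otimes\mathcal{A}\to\mathcal{A}$. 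The two unit axioms reduce likewise to $\mathcal{V}$-naturality of $l$ and $r$, using $\Phi_0=\mathrm{id}$. Running the same computation backwards shows that $\mathcal{V}$-naturality of the constraints forces the monoidal-functor axioms, so the two assignments are mutually inverse. I expect the main obstacle to be purely bookkeeping: tracking the contravariant variance in the first factor—so that the domain constraints carry the inverse isomorphisms—and checking that the diagrams match on the nose rather than merely up to coherence cells that then have to be cancelled.

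Finally, for the braided statement I would run the analogous translation for the braiding. Assuming $\mathcal{A}_\circ$ is braided with $c$, a monoidal $\mathcal{V}$-category is braided with underlying braiding $c$ exactly when $c$ is $\mathcal{V}$-natural; writing the braided-monoidal-functor axiom for $\mathcal{A}(-,-)$ and again absorbing the maps $T_{ABCD}$ on both sides turns it into the $\mathcal{V}$-naturality square for $c$, in which the inverse braiding $c^{-1}$ appears on the contravariant argument and $c$ on the covariant one. This is precisely why the domain must be equipped with $c^{-1}\times c$ and the codomain $\mathcal{V}$ with its own braiding, yielding the stated equivalence and completing the proof.
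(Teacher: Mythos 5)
Your proposal is correct and takes essentially the approach the paper intends: the paper offers no written proof of this theorem (it only remarks beforehand that ``it is not hard to see''), and your argument --- the data on both sides is literally the same ordinary monoidal structure on $\mathcal{A}_\circ$ together with $(T_{ABCD},\mathrm{id}_J)$, and each monoidal-functor coherence axiom for $\mathcal{A}(-,-)$, after cancelling the invertible map $\mathcal{A}(a,1)$ (resp.\ $\mathcal{A}(l,1)$, $\mathcal{A}(r,1)$, $\mathcal{A}(c,1)$) from one side, becomes precisely the $\mathcal{V}$-naturality condition for the corresponding constraint --- is the standard verification that fills in exactly this gap. Your treatment of the braided case likewise matches how the paper uses this theorem later, e.g.\ the square~\eqref{eq:22} in the proof of Theorem~\ref{braidthm} is exactly your translated braiding axiom with $c^{-1}$ in the contravariant slot.
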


\begin{df}
  \label{df:1}
  Let $*\colon\mathcal{V}\times\mathcal{A}\to\mathcal{A}$ be a left action of
  the braided monoidal category $\mathcal{V}$. If $(\mathcal{A},\diamond,J)$ is
  a monoidal category, by an \emph{opmonoidal structure} on the action we shall
  mean an opmonoidal structure on the functor $*$, where its domain has the
  product monoidal structure, that makes the natural isomorphisms $\alpha$ and
  $\lambda$ opmonoidal natural transformations. We speak of an opmonoidal action of
  $\mathcal{V}$.
\end{df}
In more explicit terms, an opmonoidal structure on the action $*$ consists of a
morphism and a natural transformation
\begin{equation}
  \label{eq:19}
  \xi_0\colon I*J\longrightarrow J
  \qquad
  \xi_{XYAB}\colon(X\otimes Y)*(A\diamond B)\longrightarrow (X*A)\diamond(Y*B)
\end{equation}
that make the diagrams in Figure~\ref{fig:1} commute (the associativity
constraints of both $\otimes$ and $\diamond$ are omitted); the first three
diagrams exhibit $(*,\xi,\xi_0)$ as an opmonoidal functor, while the last four
diagrams exhibit $\alpha$ and $\lambda$ as opmonoidal transformations.
\begin{figure}
  \begin{equation}
    \xymatrix@C=1.4cm{
      (X\otimes Y \otimes Z)*(A\diamond B\diamond C)
      \ar[d]|{\xi_{X\otimes Y,Z,A\diamond B,C}}
      \ar[r]^-{\xi_{X,Y\otimes Z,A,B\diamond C}}&
      (X*A)\diamond((Y\otimes Z)*(B\diamond C))
      \ar[d]|{1\diamond\xi_{YZBC}}
      \\
      ((X\otimes Y)*(A\diamond B))\diamond(Z*C)
      \ar[r]^-{\xi_{XYAB}\diamond 1}
      &
      (X*A)\diamond (Y*B)\diamond (Z*C)
    }
  \end{equation}
  \begin{equation}
    \xymatrix{
      (I\otimes X)*(J\diamond A)\ar[d]_{\xi_{IXJA}}\ar[r]^-\cong
      &
      X*A
      \\
      (I*J)\diamond(X*A)\ar[r]^-{\xi_0\diamond 1}
      &
      J\diamond(X*A)\ar[u]_\cong
    }
    \quad
    \xymatrix{
      (X\otimes I)*(A\diamond J)\ar[r]^-\cong\ar[d]_{\xi_{XIAJ}}
      &
      X*A
      \\
      (X*A)\diamond(I*J)\ar[r]^-{1\diamond\xi_0}
      &
      (X*A)\diamond J\ar[u]_\cong
    }
  \end{equation}
  \begin{equation}
    \xymatrix{
      (X\otimes X')*((Y\otimes Y')*(A\diamond A'))\ar[r]^-\alpha
      \ar[d]|{1*\xi_{YY'AA'}}&
      (X\otimes X'\otimes Y\otimes Y')*(A\diamond A')
      \ar[d]|{(1\otimes c_{X'Y}\otimes 1)*1}
      \\
      (X\otimes X')*((Y*A)\diamond(Y'*A'))
      \ar[d]|{\xi_{X,X',Y*A,Y'*A'}}
      &
      (X\otimes Y\otimes X'\otimes Y')*(A\diamond A')
      \ar[d]|{\xi_{X\otimes Y,X'\otimes Y',A,A'}}
      \\
      (X*(Y*A))\diamond(X'*(Y'*A'))
      \ar[r]^-{\alpha\diamond\alpha}
      &
      ((X\otimes Y)*A)\diamond ((X'\otimes Y')*A')
    }
  \end{equation}
  \begin{equation}
    \xymatrix@R=.5cm{
      (I\otimes I)*J\ar[r]^-{\xi_{IIJ}}\ar[d]_\cong
      &
      I*(I*J)\ar[d]^{1*\xi_0}
      \\
      I*J\ar[d]_{\xi_0}
      &
      I*J\ar[d]^{\xi_0}
      \\
      J\ar@{=}[r]
      &
      J
    }
  \end{equation}
  \begin{equation}
    \xymatrix{
      (I\otimes I)*(A\diamond B)\ar[r]^-{\xi_{IIAB}}\ar[d]_\cong
      &
      (I*A)\diamond(I*B)\ar[d]^{\lambda_A\diamond\lambda_B}
      \\
      I*(A\diamond B)\ar[r]^-\cong
      &
      A\diamond B
    }
    \qquad
    \xymatrix{
      I*J\ar[r]^{\lambda_J}\ar[d]_{\xi_0}
      &
      J\ar@{=}[d]
      \\
      J\ar@{=}[r]
      &
      J
    }
  \end{equation}
  \caption{Axioms of an opmonoidal action.}
\label{fig:1}
\end{figure}

\begin{thm}\label{thisthm}
  Suppose given a left action $*\colon \ca{V}\times \ca{A}\to\ca{A}$ of a
  braided monoidal category $\mathcal{V}$ such that $(-*A)$ has a right adjoint for
  all $A$, and let $\underline{\mathcal{A}}$ be the associated
  $\mathcal{V}$-category. Then, each opmonoidal structure on the action induces
  a monoidal $\mathcal{V}$-category structure on $\underline{\mathcal{A}}$
  with underlying monoidal category $\mathcal{A}$.
\end{thm}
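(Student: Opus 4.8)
The plan is to reduce the statement to Theorem~\ref{mainthm} by extracting from the opmonoidal structure on $*$ exactly the data that theorem consumes: a $\ca{V}$-functor $T\colon\ul{A}\otimes\ul{A}\to\ul{A}$ acting as $\diamond$ on objects and with unit object $J$, together with a monoidal-functor structure on the enriched-hom functor $\ul{A}(-,-)=H(-,-)$. The underlying monoidal category will be the given $(\ca{A},\diamond,J)$, so that the underlying monoidal category of the resulting monoidal $\ca{V}$-category is $\ca{A}$, as asserted.

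First I would produce the monoidal structure on $H$ directly from Proposition~\ref{param}. The action and its right adjoints fit that proposition as parametrised adjoints $(-*A)\dashv H(A,-)$, taking $\ca{B}=\ca{V}$ and $\ca{C}=\ca{D}=\ca{A}$. Consequently the opmonoidal-functor part of the structure --- the maps $\xi_0$ and $\xi_{XYAB}$ subject to the first three diagrams of Figure~\ref{fig:1} --- transposes across \eqref{eq:18} to a monoidal structure on $H$. Explicitly, $T_{ABCD}\colon H(A,C)\otimes H(B,D)\to H(A\diamond B,C\diamond D)$ is the transpose of
\[
(H(A,C)\otimes H(B,D))*(A\diamond B)\xrightarrow{\ \xi\ }(H(A,C)*A)\diamond(H(B,D)*B)\longrightarrow C\diamond D,
\]
the last arrow being $\diamond$ of the two counits $H(A,C)*A\to C$ and $H(B,D)*B\to D$, while the unit map $I\to H(J,J)$ is the transpose of $\xi_0$. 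By the diagram of Figure~\ref{fig:1} relating $\xi_0$ and $\lambda_J$ we have $\xi_0=\lambda_J$, so this unit map coincides with the enriched identity $\mathrm{id}\colon I\to H(J,J)$ of $\ul{A}$ at $J$. Proposition~\ref{param} guarantees that the monoidal-functor coherence of $T_{ABCD}$ and $\mathrm{id}$ is equivalent to those first three diagrams, so nothing remains to check here.

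Next I would promote $T_{ABCD}$ to a genuine $\ca{V}$-functor $T$. Preservation of enriched identities transposes across \eqref{eq:18}, using naturality of $\xi$, to the opmonoidality of $\lambda$ (the diagram carrying $\lambda_A\diamond\lambda_B$), again together with $\xi_0=\lambda_J$. Preservation of enriched composition is the crux: the composition of $\ul{A}\otimes\ul{A}$ is defined in \eqref{eq:12} using the braiding $c$ of $\ca{V}$, and transposing the two composites that must agree produces a diagram in $\ca{A}$ in which precisely that braiding occurs. It is filled by the opmonoidality of the associator $\alpha$ --- the diagram of Figure~\ref{fig:1} whose middle leg is $(1\otimes c_{X'Y}\otimes 1)*1$ --- together with the opmonoidal-functor axioms for $\xi$ and naturality of the counits. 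I expect this to be the main obstacle: the delicate point is the bookkeeping that the single braiding $c_{X'Y}$ in the $\alpha$-opmonoidality diagram matches exactly the braiding inserted in \eqref{eq:12} once everything is transposed back into $\ca{A}$.

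Finally, with $T$ a $\ca{V}$-functor, the object $J$, the monoidal category $(\ca{A},\diamond,J)$ as underlying structure, and the monoidal-functor structure on $\ul{A}(-,-)$ produced above, all the data required by Theorem~\ref{mainthm} are in place and its condition~(\ref{item:5}) holds. That theorem then delivers a monoidal $\ca{V}$-category structure on $\ul{A}$ with tensor $T$ and unit $J$, and with underlying monoidal category $(\ca{A},\diamond,J)$, as required; in particular the associativity and unit constraints are produced $\ca{V}$-naturally by the theorem, so they need no separate verification. The diagram of Figure~\ref{fig:1} not explicitly invoked above, the coherence out of $(I\otimes I)*J$ involving $\xi_{IIJ}$ and $\xi_0$, serves only to guarantee the mutual consistency of these pieces of data and requires no further treatment.
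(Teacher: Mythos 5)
Your proposal is correct and follows essentially the same route as the paper's proof: define $T$ on objects by $\diamond$ and on homs as the transpose of $\xi$ followed by the counits, verify $\mathcal{V}$-functoriality of $T$ using the opmonoidality of $\alpha$ (for composition) and naturality of $\xi$ together with the $\lambda$-diagrams (for identities), obtain the monoidal structure on $\underline{\mathcal{A}}(-,-)$ via Proposition~\ref{param}, and conclude by Theorem~\ref{mainthm}. The only differences are cosmetic: you build the monoidal structure on the hom-functor before checking $\mathcal{V}$-functoriality, whereas the paper does this in the opposite order, and you are somewhat more explicit than the paper about where $\xi_0=\lambda_J$ and the diagram involving $\lambda_A\diamond\lambda_B$ enter the identity-preservation step.
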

\begin{proof}
  We will first construct a functor
  $T\colon\underline{\mathcal{A}}\otimes\underline{\mathcal{A}}\to\underline{\mathcal{A}}$. On
  objects it will be given by $T(A,B)=A\diamond B$; on homs it is given by
  the morphisms $T_{ABCD}\colon
  \underline{\mathcal{A}}(A,C)\otimes\underline{\mathcal{A}}(B,D)\to
  \underline{\mathcal{A}}(A\diamond B,C\diamond D)$ that are transpose to the composition
  \begin{equation}
    \label{eq:36}
    (
    \underline{\mathcal{A}}(A,C)\otimes\underline{\mathcal{A}}(B,D))*(A\diamond
    B)
    \xrightarrow{\xi}
    (\underline{\mathcal{A}}(A,C)* A)\diamond (\underline{\mathcal{A}}(B,D)*B)
    \xrightarrow{\varepsilon\diamond\varepsilon}
    C\diamond D.
  \end{equation}
  The preservation of composition for the $\mathcal{V}$-functor $T$ is expressed
  by the commutativity of the top diagram in Figure~\ref{fig:5}, which can be deduced from the
  commutativity of the third diagram in Figure~\ref{fig:1} (expressing the monoidality of $\alpha$)
  by setting $X=\ul{A}(C,E)$, $X'=\ul{A}(D,G)$, $Y=\ul{A}(A,C)$,
  $Y'=\ul{A}(B,D)$, $A=A\diamond B$ and $A'=E\diamond G$.
  \begin{figure}
  \begin{equation}
    \xymatrix@C=.25in{
      \ul{A}(C,E)\otimes\ul{A}(D,G)\otimes\ul{A}(A,C)\otimes\ul{A}(B,D)
      \ar[r]^-{T\otimes T}
      \ar[d]_{1\otimes c\otimes 1}
      &
      \ul{A}(C\diamond D,E\diamond G)\otimes\ul(A\diamond B,C\diamond D)
      \ar[dd]^{\mathrm{comp}}
      \\
      \ul{A}(C,E)\otimes\ul{A}(A,C)\otimes \ul{A}(D,G)\otimes\ul{A}(B,D)
      \ar[d]_{\mathrm{comp}\otimes \mathrm{comp}}
      &
      \\
      \ul{A}(A,E)\otimes\ul{A}(B,G)\ar[r]^-T
      &
      \ul{A}(A\diamond B,E\diamond G)
    }
  \end{equation}
  \begin{equation}
    \xymatrix{
      \ul{A}(A,A)\otimes\ul{A}(B,B)\ar[r]^-{T}&
      \ul{A}(A\diamond B,A\diamond B)\\
      I\ar[u]^{\mathrm{id}\otimes\mathrm{id}}\ar[ur]_{\mathrm{id}}
    }
  \end{equation}
  \caption{$\mathcal{V}$-functor axioms for $T$.}\label{fig:5}
  \end{figure}
  The preservation of identities for the $\mathcal{V}$-functor $T$ is the
  commutativity of the bottom diagram in Figure~\ref{fig:5}, which once translated under the adjunction
  $(-*(A\diamond B))\dashv\ul{A}(A\diamond B,-)$, can be easily seen to hold by
  naturality of $\xi$.

  We can now use Theorem~\ref{mainthm} to complete the proof. We must show that
  the morphisms $T_{ABCD}$ and $\mathrm{id}_J\colon I\to\ul{A}(J,J)$ form a
  monoidal structure on functor
  $\ul{A}(-,-)\colon\mathcal{A}^\mathrm{op}\times\mathcal{A}\to\mathcal{V}$. This
  is precisely the case, by Proposition~\ref{param}, since $T_{ABCD}$ and
  $\mathrm{id}_J$ are the transpose of the opmonoidal structure of the action $*$
  under the parametrised adjunctions $(-*A)\dashv\ul{A}(A,-)$.
\end{proof}
In the case of a right closed monoidal category $\mathcal{V}$ acting on itself
via the tensor product, Theorem~\ref{thisthm} says that $\mathcal{V}$ is a
monoidal $\mathcal{V}$-category, provided that it is equipped with a braiding.
\begin{ex}
  \label{ex:2}
  If in Example~\ref{ex:1} the monoidal right closed category $\mathcal{V}$ is
  braided, then $[-,-]^\op$ is an opmonoidal action of $\mathcal{V}$ on
  $\mathcal{V}^{\mathrm{op}}$. The opmonoidal structure is given by morphisms of
  the form~\eqref{eq:19} but in $\mathcal{V}^{\mathrm{op}}$. These are the
  canonical isomorphism $I\cong [I,I]$ and the morphism $[X,Y]\otimes
  [Z,W]\to[X\otimes Z,Y\otimes W]$ that makes $[-,-]$ a monoidal functor.
\end{ex}

\begin{df}
  \label{df:2}
  Suppose given an opmonoidal action as in
  Definition~\ref{df:1} and suppose that the monoidal category $\mathcal{A}$ braided. We
  say that the opmonoidal action is \emph{braided} when the opmonoidal functor $*$ is
  so.
\end{df}
In more explicit terms, the opmonoidal action is braided when the following
diagram commutes, where both the braiding of $\mathcal{V}$ and $\mathcal{A}$ are
denoted by $c$.
\begin{equation}
  \label{eq:21}
  \xymatrix{
    (X\otimes Y)*(A\diamond B)\ar[r]^-{\xi}\ar[d]_{c*c}&
    (X*A)\diamond(Y*B)\ar[d]^{c}\\
    (Y\otimes X)*(B\diamond A)\ar[r]^-\xi&
    (Y*B)\diamond(X*A)
  }
\end{equation}
\begin{ex}
  \label{ex:3}
  Continuing with Example~\ref{ex:2}, the action of $\mathcal{V}$ on
  $\mathcal{V}^{\mathrm{op}}$ given by the internal hom is braided if
  $\mathcal{V}$ is symmetric. The diagram~\eqref{eq:21} in this case looks as
  follows (where we use that $c^{-1}=c$).
  \begin{equation}
    \label{eq:35}
    \xymatrix{
      [X,A]\otimes[Y,B]\ar[r]\ar[d]_{c}&
      [X\otimes Y,A\otimes B]\ar[d]^{[c,c]}\\
      [Y,B]\otimes[X,A]\ar[r]&
      [Y\otimes X,B\otimes A]
    }
  \end{equation}
\end{ex}
\begin{thm}\label{braidthm}
  Suppose that in Theorem~\ref{thisthm}
  $\mathcal{A}$ is braided. Then the opmonoidal action is braided if and only if
  the braiding of $\mathcal{A}$ is $\mathcal{V}$-natural. In this situation
  $\underline{\mathcal{A}}$ is braided.
\end{thm}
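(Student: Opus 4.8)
The plan is to route everything through the enriched-hom functor $\underline{\mathcal{A}}(-,-)\colon\mathcal{A}_\circ^{\mathrm{op}}\times\mathcal{A}_\circ\to\mathcal{V}$ and the last clause of Theorem~\ref{mainthm}. Recall from the proof of Theorem~\ref{thisthm} that, via Proposition~\ref{param}, the monoidal structure $T_{ABCD}$ on $\underline{\mathcal{A}}(-,-)$ is the mate of the opmonoidal structure $\xi$ of the action under the parametrised adjunctions $(-*A)\dashv\underline{\mathcal{A}}(A,-)$. I would prove that the following three conditions are mutually equivalent: (i) the opmonoidal action $*$ is braided, i.e.\ \eqref{eq:21} commutes; (ii) the braiding $c$ of $\mathcal{A}$ is $\mathcal{V}$-natural; and (iii) $\underline{\mathcal{A}}(-,-)$ is a braided monoidal functor from $(\mathcal{A}_\circ^{\mathrm{op}}\times\mathcal{A}_\circ,\,c^{-1}\times c)$ to the target with braiding $c$, in the sense of the last clause of Theorem~\ref{mainthm}. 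Granting this, the theorem is immediate: the stated equivalence is (i)$\Leftrightarrow$(ii), and when these hold Theorem~\ref{mainthm} turns (iii) into a $\mathcal{V}$-braiding on $\underline{\mathcal{A}}$.

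The equivalence (ii)$\Leftrightarrow$(iii) is an unwinding of definitions. Writing out the braided-monoidal-functor axiom for $\underline{\mathcal{A}}(-,-)$ with the braidings $c^{-1}\times c$ on the source and $c$ on the target yields a single square, for all objects, comparing $T_{ABCD}$ pre-composed with the braiding of the homs and post-composed with $\underline{\mathcal{A}}(1,c)$ against the symmetric route through $T_{BADC}$ and $\underline{\mathcal{A}}(c,1)$. This is precisely the enriched-naturality diagram for $c$, viewed as a transformation between the $\mathcal{V}$-functors $\diamond$ and $\diamond$ composed with the swap of the two tensor factors; hence (iii) holds iff $c$ is $\mathcal{V}$-natural. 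Once $c$ is known to be $\mathcal{V}$-natural, the enriched braiding (hexagon and invertibility) axioms are inherited from the corresponding axioms for the underlying braiding of $\mathcal{A}_\circ$, so $c$ is a genuine braiding of the $\mathcal{V}$-category; this is the final assertion of the theorem.

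For (i)$\Leftrightarrow$(iii) I would extend Proposition~\ref{param} to the braided setting, showing that under its bijection between opmonoidal structures on $*$ and monoidal structures on $\underline{\mathcal{A}}(-,-)$, the braided opmonoidal structures correspond exactly to the braided monoidal ones. Concretely, one takes the mate of the square~\eqref{eq:21} across the adjunctions $(-*A)\dashv\underline{\mathcal{A}}(A,-)$, with $X=\underline{\mathcal{A}}(A,C)$ and $Y=\underline{\mathcal{A}}(B,D)$: the two copies of $\xi$ become the two copies of $T$ appearing in~\eqref{eq:36}, the map $c*c$ transposes to the braiding of the source category (the braiding of $\mathcal{V}$ on the tensored homs, together with the braiding on the $\mathcal{A}$-objects which, because the first variable of the hom is contravariant, is recorded as $c^{-1}$ on the $\mathrm{op}$-factor), and the map $c$ on the codomain of~\eqref{eq:21} transposes to the braiding of $\mathcal{V}$ on the target. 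The transposed square is exactly the braided-monoidal-functor axiom of (iii), and since mating is a bijection, \eqref{eq:21} commutes iff that axiom holds.

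The main obstacle is this last transposition: the bookkeeping of variances, and of which braiding ($c$ or $c^{-1}$) appears on which factor, is the only delicate point. In particular one must verify that the contravariant first variable of $\underline{\mathcal{A}}(-,-)$ is responsible for the $c^{-1}$ in the prescribed source braiding $c^{-1}\times c$ of Theorem~\ref{mainthm}; once this is pinned down the computation is a formal mate calculation using naturality of the adjunction counits and the coherence already packaged in Theorem~\ref{thisthm}.
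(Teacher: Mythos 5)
Your proposal is correct and follows essentially the same route as the paper: the paper's proof also transposes the braided-action square~\eqref{eq:21} under the parametrised adjunctions $(-*A)\dashv\underline{\mathcal{A}}(A,-)$ to obtain exactly the square expressing that $\underline{\mathcal{A}}(-,-)$ is a braided monoidal functor from $(\mathcal{A}_\circ^{\mathrm{op}}\times\mathcal{A}_\circ,c^{-1}\times c)$ (which is the joint $\mathcal{V}$-naturality condition for $c$), and then invokes the last clause of Theorem~\ref{mainthm}. Your explicit separation into the equivalences (i)$\Leftrightarrow$(iii) and (ii)$\Leftrightarrow$(iii) just spells out what the paper compresses into its two-sentence argument.
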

\begin{proof}
  The commutativity of~\eqref{eq:21} is equivalent to the
  commutativity of
  \begin{equation}
    \label{eq:22}
    \xymatrix{
      \underline{\mathcal{A}}(A,B)\otimes \underline{\mathcal{A}}(C,D)
      \ar[r]^-{T_{ACBD}}\ar[d]_c
      &
      \underline{\mathcal{A}}(A\diamond C,B\diamond
      D)\ar[d]^{\underline{\mathcal{A}}(c^{-1},c)}
      \\
      \underline{\mathcal{A}}(C,D)\otimes \underline{\mathcal{A}}(A,B)
      \ar[r]^-{TCADB}
      &
      \underline{\mathcal{A}}(C\diamond A,D\diamond B)
    }
  \end{equation}
  where we used the notation of the proof of Theorem~\ref{thisthm}. This is the
  condition that ${\mathcal{A}}(-,-)$ is braided monoidal, required by Theorem~\ref{mainthm}.
\end{proof}

\section{The universal measuring comonoid}\label{existmeascoal}

The notion of the universal measuring coalgebra
$P(A,B)$ over a field $k$ appeared in Sweedler's book~\cite{Sweedler}.
The elements of $P(A,B)$ can be thought of as generalised
maps from $A$ to $B$, and examples of this
point of view are given in~\cite{Batchelor}.
The natural isomorphism
that defines the object $P(A,B)$ is
\begin{equation}\label{this}
\Alg_k(A,\Hom_k(C,B))\cong\Coalg_k(C,P(A,B)).
\end{equation}
Note that the plain algebra morphisms $A\to B$
correspond to the group-like elements of $P(A,B)$.

Our aim in this section is to prove the existence of $P(A,B)$
in a broader context, identifying
the underlying categorical ideas.
In that direction, consider an arbitrary 
braided monoidal closed category $\ca{V}$.

We remind the reader that
in Section~\ref{background} we saw how
the internal hom induces a functor
$[-,-]\colon \Comon(\ca{V})^\mathrm{op}\times
\Mon(\ca{V})\to \Mon(\ca{V})$.

\begin{thm}\label{meascomonthm}
Suppose that $\ca{V}$ is a locally presentable braided monoidal
closed category. Then the
functor $[-,B]^{\mathrm{op}}:\Comon(\ca{V})\to
\Mon(\ca{V})^\mathrm{op}$ has a right adjoint
$P(-,B)$; ie there is a natural isomorphism
\begin{equation}\label{meascomon}
\Mon(\ca{V})(A,[C,B])\cong
\Comon(\ca{V})(C,P(A,B)).
\end{equation}
\end{thm}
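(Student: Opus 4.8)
The plan is to obtain $P(-,B)$ as a right adjoint by verifying the hypotheses of the adjoint functor theorem recorded in Remark~\ref{rmk:7}\eqref{item:7}. Write $F=[-,B]^{\op}\colon\Comon(\ca{V})\to\Mon(\ca{V})^{\op}$ for the functor whose underlying assignment is $C\mapsto[C,B]$ with its convolution monoid structure, as supplied by Lemma~\ref{Hsymmetriclax} and the bifunctor \eqref{MonHom}. According to Remark~\ref{rmk:7}\eqref{item:7} it suffices to check three things: that the domain $\Comon(\ca{V})$ is cocomplete and has a small dense subcategory; that the codomain $\Mon(\ca{V})^{\op}$ is cocomplete; and that $F$ is cocontinuous. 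The first two are immediate from Proposition~\ref{moncomonadm}: since $\ca{V}$ is locally presentable, both $\Comon(\ca{V})$ and $\Mon(\ca{V})$ are locally presentable, so $\Comon(\ca{V})$ is cocomplete with the small dense subcategory of its presentable objects, while $\Mon(\ca{V})$ is complete and hence $\Mon(\ca{V})^{\op}$ is cocomplete.

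The substance of the argument is the cocontinuity of $F$. Unwinding the opposites, this is equivalent to asking that $[-,B]\colon\Comon(\ca{V})^{\op}\to\Mon(\ca{V})$ preserve limits, that is, carry a colimit $\colim_i C_i$ of comonoids to the limit $\lim_i[C_i,B]$ of monoids. I would establish this along the commuting square of forgetful functors over the base $\ca{V}$. First, the forgetful functor $\Comon(\ca{V})\to\ca{V}$ creates colimits: given a diagram of comonoids, one equips the underlying colimit $C=\colim_i C_i$ with a comultiplication and counit induced out of $C$ by the cocones $(q_i\otimes q_i)\cdot\Delta_i$ and $\varepsilon_i$ (these are cocones precisely because the transition maps are comonoid morphisms and the $q_i$ are compatible with the injections), and the comonoid axioms follow by comparing maps out of $C$, which are determined by their restrictions along the $q_i$. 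Dually, and as usual, the forgetful functor $\Mon(\ca{V})\to\ca{V}$ creates limits.

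It then remains to observe that the underlying functor $[-,B]\colon\ca{V}^{\op}\to\ca{V}$ preserves limits: for any $W$ we have $\ca{V}(W,[\colim_i X_i,B])\cong\ca{V}(W\otimes\colim_i X_i,B)\cong\lim_i\ca{V}(W,[X_i,B])$, using that $W\otimes-$ is cocontinuous because $\ca{V}$ is closed, so that $[\colim_i X_i,B]\cong\lim_i[X_i,B]$ by Yoneda. Combining the three facts, the underlying object of $[\colim_i C_i,B]$ is the limit in $\ca{V}$ of the underlying objects of the $[C_i,B]$; since $\Mon(\ca{V})\to\ca{V}$ creates limits and the convolution structures are compatible with the forgetful functors, this identifies $[\colim_i C_i,B]$ with $\lim_i[C_i,B]$ in $\Mon(\ca{V})$. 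Hence $F$ is cocontinuous and the required right adjoint $P(-,B)$ exists, yielding the natural isomorphism \eqref{meascomon}, automatically natural in $A$ and $C$. I expect the main obstacle to be the careful verification that colimits of comonoids are created in $\ca{V}$ — in particular that the induced comonoid structure genuinely satisfies coassociativity and the counit laws — as this is where the monoidal bookkeeping is concentrated; the remaining steps are formal consequences of closedness and local presentability.
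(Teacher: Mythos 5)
Your proposal is correct and takes essentially the same route as the paper's proof: you establish cocontinuity of $[-,B]^{\op}$ by passing along the commuting square of forgetful functors (the comonoid forgetful functor creating colimits, the monoid forgetful functor creating limits, and the underlying internal hom $[-,B]\colon\ca{V}^{\op}\to\ca{V}$ sending colimits to limits), and then invoke the adjoint functor theorem of Remark~\ref{rmk:7} together with the local presentability of $\Comon(\ca{V})$ from Proposition~\ref{moncomonadm}. The only difference is one of detail: you spell out the creation arguments and correctly note that, since $\Mon(\ca{V})^{\op}$ is merely cocomplete rather than locally presentable, it is the stronger clause of Remark~\ref{rmk:7} (cocontinuous functor out of a cocomplete category with a small dense subcategory) that is actually being used, a point the paper leaves implicit.
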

\begin{proof}
By Proposition~\ref{moncomonadm}, the category
$\Comon(\ca{V})$ is locally presentable.
The diagram
\begin{equation}\label{Hcont}
  \xymatrix@C=1cm{
    \Comon(\ca{V})
    \ar[r]^-{[-,B]^{\mathrm{op}}}\ar[d]_U&
    \Mon(\ca{V})^{\mathrm{op}}\ar[d]^-V\\
    \ca{V}\ar[r]^-{[-,U(B)]^{\mathrm{op}}} &\ca{V}^\mathrm{op}
  }
\end{equation}
commutes, where $U$ and $V$ are the forgetful functors, and since
$[-,U(B)]^{\mathrm{op}}$ is cocontinuous, so is the composition
$[U-,U(B)]^{\mathrm{op}}$. Therefore, the the functor at the top of the diagram
is cocontinuous, since $V$ creates colimits. The
existence of the adjoint $P(-,B)$ now follows from the locally presentability of
$\Comon(\mathcal{V})$ (Remark~\ref{rmk:7}).
\end{proof}

The object $P(A,B)$ for monoids $A$ and $B$
is called the \emph{universal measuring comonoid}, and
the parametrised adjoint of $[-,-]^\mathrm{op}$, namely
\begin{equation}\label{defP}
P:\Mon(\ca{V})^\mathrm{op}\times\Mon(\ca{V})
\to\Comon(\ca{V})
\end{equation}
is called the \emph{Sweedler hom} in~\cite{AnelJoyal}.

\begin{cor}
  \label{cor:5}
  The functor $P(-,-)\colon\Mon(\mathcal{V})^{\mathrm{op}}\times
  \Mon(\mathcal{V})\to\Comon(\mathcal{V})$ is continuous in each variable.
\end{cor}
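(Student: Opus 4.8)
The plan is to handle the two variables separately, since the situation is genuinely asymmetric. Continuity in the first variable is immediate: by Theorem~\ref{meascomonthm} the functor $P(-,B)\colon\Mon(\ca{V})^{\op}\to\Comon(\ca{V})$ is a right adjoint (to $[-,B]^{\op}$), and right adjoints preserve all limits that exist. So there is nothing further to do for the first slot, and the only real work is in the second variable $P(A,-)\colon\Mon(\ca{V})\to\Comon(\ca{V})$, for which no adjunction is directly to hand.

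For the second variable I would argue by representables and the Yoneda lemma. First note that $\Comon(\ca{V})$ is complete: by Proposition~\ref{moncomonadm} it is locally presentable, so any limit $\lim_i P(A,B_i)$ exists. Given a limit $B=\lim_i B_i$ in $\Mon(\ca{V})$, I would compute, for an arbitrary comonoid $C$, the set $\Comon(\ca{V})(C,P(A,\lim_i B_i))$ and exhibit a natural isomorphism to $\Comon(\ca{V})(C,\lim_i P(A,B_i))$; the desired $P(A,\lim_i B_i)\cong\lim_i P(A,B_i)$ then follows by Yoneda.

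The chain of isomorphisms combines the defining adjunction~\eqref{meascomon} with three continuity facts. Starting from $\Comon(\ca{V})(C,P(A,\lim_i B_i))\cong\Mon(\ca{V})(A,[C,\lim_i B_i])$, the central step is that the convolution internal-hom functor $[C,-]\colon\Mon(\ca{V})\to\Mon(\ca{V})$ of~\eqref{MonHom} is continuous: its underlying $\ca{V}$-functor $[UC,-]$ is a right adjoint and hence preserves limits, while the forgetful functor $\Mon(\ca{V})\to\ca{V}$ creates them, so that $[C,\lim_i B_i]\cong\lim_i[C,B_i]$ in $\Mon(\ca{V})$. Commuting the representables $\Mon(\ca{V})(A,-)$ and $\Comon(\ca{V})(C,-)$ past the limit (both being continuous) and then reapplying~\eqref{meascomon} in reverse delivers $\Comon(\ca{V})(C,\lim_i P(A,B_i))$, as required.

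The one point demanding care—and the step I expect to be the main obstacle—is the justification that $[C,-]$ is continuous as an endofunctor of $\Mon(\ca{V})$ rather than merely of $\ca{V}$; this rests on the forgetful functor creating limits, so that the monoid structure on the limit is forced and the limit cone is preserved by the lifted functor. Everything else is routine manipulation of the natural isomorphism~\eqref{meascomon}, with naturality in $C$ inherited throughout.
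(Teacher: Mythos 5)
Your proof is correct and follows essentially the same route as the paper: continuity in the first variable via the right adjoint from Theorem~\ref{meascomonthm}, and continuity in the second variable by reducing, through the adjunction isomorphism~\eqref{meascomon} and representables, to the continuity of $\Mon(\mathcal{V})(A,[C,-])$. The only difference is presentational — you phrase the reduction via a Yoneda chain and spell out the step the paper dismisses as ``clearly true'' (that $[C,-]$ is continuous as an endofunctor of $\Mon(\mathcal{V})$ because its underlying functor is a right adjoint and the forgetful functor creates limits), which is a worthwhile elaboration but not a different argument.
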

\begin{proof}
  By definition, $P(-,B)$ is a right adjoint, thus continuous. In the other
  variable, $P(A,-)$ preserves limits if and only if
  $\Comon(\mathcal{V})(C,P(A,-))$ does for all $C$, if and only if
  $\Mon(\mathcal{V})(A,[C,-])$ does, by Theorem~\ref{meascomonthm}; this last
  condition is clearly true.
\end{proof}

\begin{ex}
\label{ex:4}
Since the category $\Mod_k$ of modules over a commutative ring $k$ is
locally presentable symmetric monoidal closed,
we recover the classical situation: the existence, of each pair of $k$-algebras
$A$, $B$, of a \emph{universal measuring coalgebra} $P(A,B)$ with a natural
isomorphism~\eqref{meascomon}. See also~\cite[Prop.~4]{AdjAlgCoalg}.
\end{ex}
When $k$ is a field, $P(A,k)$ is usually denoted by $A^\circ$ and called the
\emph{finite} or \emph{Sweedler dual} of the $k$-algebra $A$. It can be
presented as the subspace of $A^*$
\[
A^{\circ}=\{\alpha \in A^*|\ker \alpha\textrm{ contains a cofinite ideal}\}
=\{\alpha\in A^*|\dim(A\rightharpoonup{}\alpha)<\infty\}
\]
where $\rightharpoonup$ denotes the left action of $A$ on $A^*$ given by
$(a\rightharpoonup \alpha)(x)=\alpha(xa)$.

\begin{ex}
  \label{ex:5}
  The locally presentable braided monoidal closed category $\mathbf{gVect}_{\mathbb{N}}$ of
  $\mathbb{N}$-graded vector spaces was described in Example~\ref{ex:14}.
  Monoids and comonoids in $\mathbf{gVect}_{\mathbb{N}}$ are, respectively,
  \emph{graded $k$-algebras} and \emph{graded $k$-coalgebras}.

  The full inclusion $(-)[0]\colon\mathbf{Vect}\to
  \mathbf{gVect}_{\mathbb{N}}$ that takes a space $V$ to the graded space $V$
  concentrated in degree $0$ is a left and a right adjoint to the functor
  that takes the homogeneous component of degree $0$. All three functors are
  strong monoidal. These adjunctions induce adjunctions on the respective
  categories of monoids as the ones depicted vertically in the following
  diagram. Given a commutative algebra $B$, there are adjunctions depicted
  horizontally.
  \begin{equation}
    \label{eq:25}
    \xymatrix@C=2.3cm@R=.9cm{
      \mathbf{gCoalg}
      \ar@<5pt>[r]^-{[-,B[0]]}\ar@<-5pt>@{<-}[r]_{P(-,B[0])}\ar@{}[r]|\bot
      \ar@<-5pt>@{<-}[d]_{(-)[0]}\ar@<5pt>[d]^{(-)_0}\ar@{}[d]|\dashv
      &
      \mathbf{gAlg}^{\mathrm{op}}
      \ar@<-5pt>@{<-}[d]_{(-)[0]}\ar@<5pt>[d]^{(-)_0}\ar@{}[d]|\dashv
      \\
      \mathbf{Coalg}
      \ar@<5pt>[r]^-{[-,B]}\ar@<-5pt>@{<-}[r]_-{P(-,B)}\ar@{}[r]|\bot
      &
      \mathbf{Alg}^{\mathrm{op}}
    }
  \end{equation}
  The square of left adjoints commute, since
  $[V,B[0]]_n=\prod_{i}[V_i,B[0]_{i+n}]$ is trivial unless $n=0$, in which case
  it is isomorphic to $[V_0,B]$. It follows that the square of right functors
  commute up to isomorphism, ie
  \begin{equation}
    \label{eq:62}
    P(A,B[0])_0\cong P(A_0,B).
  \end{equation}
\end{ex}
\begin{ex}
  \label{ex:10}
  Recall from Example~\ref{ex:13} the locally finitely presentable
  monoidal category $\mathbf{dgVect}_{\mathbb{Z}}$ 
  of chain complexes.
  Monoids and comonoids in $\mathbf{dgVect}_{\mathbb{Z}}$ are usually called
  \emph{dg algebras} and \emph{dg coalgebras}, and the categories they form
  $\mathbf{dgAlg}_{\mathbb{Z}}$ and $\mathbf{dgCoalg}_{\mathbb{Z}}$.
  The universal measuring comonoid of two dg algebras is the Sweedler hom
  considered in~\cite[\S 4.1.5]{AnelJoyal}.
\end{ex}

\section{Enrichment of monoids in comonoids}\label{enrichmonscomons}

Now that we have established the existence of the universal
measuring comonoid $P(A,B)$ under certain hypotheses,
we may combine this construction with the theory
of actions of monoidal categories
of Section~\ref{actions} in order to
exhibit an enrichment of monoids over comonoids. In this section,
$\mathcal{V}$ will denote a locally presentable braided monoidal closed
category, with braiding $c$. Recall that the internal hom functor $[-,-]$ is
monoidal, and that the monoidal category of comonoids $\Comon(\mathcal{V})$
is symmetric when $\ca{V}$ is.

\begin{lem}\label{actionH}
The functor
\([-,-]^\op\colon\Comon(\ca{V})\times
\Mon(\ca{V})^\mathrm{op}\to \Mon(\ca{V})^\mathrm{op}
\) (\ref{MonHom})
is an action of the monoidal
category $\Comon(\ca{V})$ on
$\Mon(\ca{V})^{\mathrm{op}}$.
If the braiding of $\mathcal{V}$ is a symmetry, then
this is a braided opmonoidal action of the symmetric monoidal category of comonoids.
\end{lem}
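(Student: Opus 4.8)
The plan is to verify the statement in two stages: first that $[-,-]^{\op}$ is genuinely an action of the monoidal category $\Comon(\ca{V})$ on $\Mon(\ca{V})^{\op}$, and second that, when the braiding is a symmetry, it carries a braided opmonoidal structure in the sense of Definitions~\ref{df:1} and~\ref{df:2}. Throughout I would work in $\ca{V}$ and use that the forgetful functors from $\Mon(\ca{V})$ and $\Comon(\ca{V})$ are strict monoidal, so that the monoid/comonoid structures on tensor products are computed in $\ca{V}$. The key observation is that all the required coherence data and axioms are already available at the level of $\ca{V}$ from the monoidal structure on the internal hom $[-,-]$ discussed after Proposition~\ref{param}; the work is to check that they descend to morphisms of monoids (equivalently, to the opposite category) and that they satisfy the axioms in Figure~\ref{fig:1}.

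\emph{Action structure.} First I would exhibit the associativity and unit isomorphisms. Writing $\diamond$ for the monoidal structure of $\Mon(\ca{V})^{\op}$ (so $\diamond$ is $\otimes$ in $\Mon(\ca{V})$, computed via the braiding), the associator $\alpha_{C,C',A}\colon C*(C'*A)\to (C\otimes C')*A$ is, in $\Mon(\ca{V})^{\op}$, the image of the canonical isomorphism $[C,[C',A]]\cong[C\otimes C',A]$ coming from the closed structure; similarly $\lambda_A\colon I*A\to A$ comes from $[I,A]\cong A$. These are isomorphisms in $\ca{V}$ by the closed structure, and one checks they are morphisms of monoids because the convolution monoid structure on $[C,A]$ is induced functorially; the pentagon and triangle axioms then hold because they hold for the underlying isomorphisms in $\ca{V}$, where they are instances of the coherence of a closed monoidal category. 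The needed right adjoints $(-*A)\dashv H(A,-)$ exist by Theorem~\ref{meascomonthm}, with $H(A,E)=P(A,E)$, but this is not needed for the action statement itself.

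\emph{Opmonoidal and braided structure.} For the symmetric case I would invoke Example~\ref{ex:2} and Example~\ref{ex:3} as the blueprint: the action of $\ca{V}$ on $\ca{V}^{\op}$ by $[-,-]^{\op}$ is a braided opmonoidal action, with $\xi_0\colon I\cong[I,I]$ and $\xi$ the monoidal-functor structure $[X,A]\otimes[Z,B]\to[X\otimes Z,A\otimes B]$ (read in $\ca{V}^{\op}$), and braidedness is the commutativity of~\eqref{eq:35}. The plan is to show these same morphisms are \emph{morphisms of monoids} when $C,C'$ are comonoids and $A,B$ are monoids, so that the whole opmonoidal structure lifts from $\ca{V}$ to $\Mon(\ca{V})^{\op}$. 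That $\xi$ and $\xi_0$ are monoid maps is exactly the content of Lemma~\ref{Hsymmetriclax}: under the symmetry hypothesis the functor $\Comon(\ca{V})^{\op}\times\Mon(\ca{V})\to\Mon(\ca{V})$ is braided monoidal, which says precisely that its monoidal-structure maps are monoid morphisms and are compatible with the symmetries. The seven axioms of Figure~\ref{fig:1} and the braiding axiom~\eqref{eq:21} then follow because they are the $\ca{V}$-level axioms of Examples~\ref{ex:2} and~\ref{ex:3}, which hold there, and the faithful strict monoidal forgetful functors let us deduce each identity of monoid morphisms from the corresponding identity in $\ca{V}$.

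\emph{Main obstacle.} The only genuinely non-formal point is that everything takes place in the \emph{opposite} category $\Mon(\ca{V})^{\op}$, so one must be careful that the opmonoidal structure on $*$ (arrows $(C\otimes C')*(A\diamond B)\to (C*A)\diamond(C'*B)$ in $\Mon(\ca{V})^{\op}$) corresponds to the \emph{monoidal} structure of $[-,-]$ in $\Mon(\ca{V})$, and that the braiding of $\Comon(\ca{V})^{\op}$ used in~\eqref{eq:21} is $c^{-1}$ rather than $c$ in the non-symmetric case --- which is exactly why the symmetry hypothesis is imposed, so that $c=c^{-1}$ and the two candidate opposite structures coincide. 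I expect the bookkeeping of these variances to be the part most prone to error, whereas the coherence axioms themselves are routine once Lemma~\ref{Hsymmetriclax} is in hand.
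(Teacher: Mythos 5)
Your proof is correct and takes essentially the same route as the paper. The paper's own proof is simply the packaged version of yours: it obtains the lemma by applying the operation of taking (co)monoid categories to the $\mathcal{V}$-level opmonoidal (resp.\ braided opmonoidal) action $[-,-]^{\op}$ of Examples~\ref{ex:2} and~\ref{ex:3}, which is precisely what your explicit checks accomplish --- the action constraints and opmonoidal structure maps descend to monoid morphisms (via Lemma~\ref{Hsymmetriclax} and the convolution-compatibility of the associator and unitor), and all coherence axioms transfer along the faithful strict monoidal forgetful functors.
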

\begin{proof}
  The functor of the statement is obtained by taking comonoid categories on
  $[-,-]^\op\colon\mathcal{V}\times\mathcal{V}^{\mathrm{op}}\to\mathcal{V}^{\mathrm{op}}$. The
  latter is an opmonoidal left action of the braided monoidal $\mathcal{V}$
  (Example~\ref{ex:2}) upon which $\Mon[-,-]^\op$ inherits the structure of a
  left action.

  If $\mathcal{V}$ is symmetric, the internal hom is a braided opmonoidal action
  of $\mathcal{V}$ on $\mathcal{V}^{\mathrm{op}}$, by Example~\ref{ex:3}. Taking
  categories of monoids, we obtain a braided opmonoidal action of the
  symmetric category of comonoids on $\Mon(\mathcal{V})^{\mathrm{op}}$.
\end{proof}

We can now apply Proposition~\ref{actionenrich}, Theorem~\ref{thisthm} and
Theorem~\ref{braidthm} to $\ca{C}=\Comon(\ca{V})$ and $\ca{A}=\Mon(\ca{V})^\op$. The
functor $[-,B]^\op$ of Lemma~\ref{actionH} has a right adjoint
$P(-,B)$ by Theorem~\ref{meascomonthm}.

\begin{thm}
  \label{thm:1}
  Let $\mathcal{V}$ be a locally presentable braided monoidal closed
  category. Then:
  \begin{enumerate*}
  \item \label{item:1}
    The category $\Mon(\mathcal V)^\mathrm{op}$ is enriched
    in $\Comon(\ca{V})$, with enriched hom objects
    $ \Mon(\ca{V})^\mathrm{op}(A,B)=P(B,A)$.
    \end{enumerate*}
    
  If $\mathcal{V}$ is moreover symmetric, then:
  \begin{enumerate*}[resume]
  \item \label{item:2}
    $\Mon(\mathcal{V})^{\mathrm{op}}$ also carries a structure of a symmetric
    monoidal $\Comon(\mathcal{V})$-enriched category.
  \item \label{item:3} $\Mon(\ca{V})$ is a symmetric monoidal $\Comon(\ca{V})$-category too, with
    hom objects $\Mon(\ca{V})(A,B)=P(A,B)$.
\end{enumerate*}
\end{thm}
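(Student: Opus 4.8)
The plan is to derive all three parts as direct applications of the machinery of Section~\ref{actions} to the action supplied by Lemma~\ref{actionH}, with base of enrichment $\Comon(\ca{V})$ and acted-upon category $\ca{A}=\Mon(\ca{V})^\op$, the action being $*=[-,-]^\op$. For part~\eqref{item:1}, Lemma~\ref{actionH} makes $[-,-]^\op$ an action of $\Comon(\ca{V})$, while Theorem~\ref{meascomonthm} provides a right adjoint $P(-,B)$ for each partial functor $(-*B)=[-,B]^\op$. Proposition~\ref{actionenrich} then produces a $\Comon(\ca{V})$-category $\ul{A}$ with underlying category $\Mon(\ca{V})^\op$ and hom-objects $H(B,A)$, and the only point needing verification is the identification $H(B,A)=P(A,B)$. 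I would read this off the defining adjunction by the chase
\[
  \Comon(\ca{V})(C,H(B,A))
  \cong\Mon(\ca{V})^\op(C*B,A)
  =\Mon(\ca{V})(A,[C,B])
  \cong\Comon(\ca{V})(C,P(A,B)),
\]
whose last step is \eqref{meascomon}; Yoneda gives $\ul{A}(B,A)=P(A,B)$, that is $\Mon(\ca{V})^\op(A,B)=P(B,A)$ after relabelling.

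For part~\eqref{item:2}, I would assume $\ca{V}$ symmetric. Then $\Comon(\ca{V})$ is symmetric and, by Remark~\ref{rmk:2}, so is $\Mon(\ca{V})$ and hence its opposite $\ca{A}$; moreover Lemma~\ref{actionH} promotes the action to a braided opmonoidal one. Applying Theorem~\ref{thisthm} endows $\ul{A}$ with a monoidal $\Comon(\ca{V})$-category structure whose underlying monoidal category is $\Mon(\ca{V})^\op$. Since the action is braided, the biconditional in Theorem~\ref{braidthm} forces the $\Comon(\ca{V})$-naturality of the braiding of $\ca{A}$ and yields a braiding on $\ul{A}$; by the final clause of Theorem~\ref{mainthm} this enriched braiding is exactly the underlying braiding $c$, which is a symmetry on $\ca{A}$, so $\ul{A}$ is in fact symmetric monoidal.

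For part~\eqref{item:3}, I would pass to opposites. As $\Comon(\ca{V})$ is symmetric, the formation of opposite enriched categories recalled after Proposition~\ref{actionenrich} applies to $\ul{A}$, giving a symmetric monoidal $\Comon(\ca{V})$-category whose underlying category is $\Mon(\ca{V})$ and whose hom-objects are $\ul{A}(B,A)=P(A,B)$; thus $\Mon(\ca{V})$ becomes a symmetric monoidal $\Comon(\ca{V})$-category with $\Mon(\ca{V})(A,B)=P(A,B)$. The conceptual content of the theorem already resides in the cited results, so I expect no genuine obstacle: the one delicate point is keeping track of the two distinct \emph{opposite} conventions involved — one on the underlying category and one on the enriched structure — so that the variances in $P(-,-)$ land on the correct arguments, which is precisely where a slip could occur.
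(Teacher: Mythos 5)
Your proposal is correct and follows essentially the same route as the paper: the paper's own proof consists precisely of applying Proposition~\ref{actionenrich}, Theorem~\ref{thisthm} and Theorem~\ref{braidthm} to the braided opmonoidal action of Lemma~\ref{actionH}, with the right adjoints $P(-,B)$ supplied by Theorem~\ref{meascomonthm}, and passing to the opposite enriched category (using symmetry of $\Comon(\ca{V})$) for part~(3). Your additional details --- the Yoneda chase identifying $H(B,A)=P(A,B)$ and the appeal to the final clause of Theorem~\ref{mainthm} to see the enriched braiding is the symmetry $c$ --- are exactly the verifications the paper leaves implicit.
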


Assume for the rest of the section that the braiding of $\mathcal{V}$ is a symmetry.
By Lemma~\ref{Hsymmetriclax}, the functor of Lemma~\ref{actionH} is a symmetric
opmonoidal functor. Hence, by Proposition~\ref{param}
we get the following result.

\begin{cor}\label{Plaxmon}
The Sweedler hom functor
$P:\Mon(\ca{V})^\mathrm{op}\times\Mon(\ca{V})
\to\Comon(\ca{V})$
is a braided monoidal functor.
\end{cor}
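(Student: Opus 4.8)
The plan is to derive Corollary~\ref{Plaxmon} as a direct consequence of the results already assembled, rather than by constructing the monoidal structure on $P$ by hand. The key observation is that $P(-,-)$ is, by Theorem~\ref{meascomonthm}, the parametrised right adjoint of the functor $[-,-]^\op\colon\Comon(\ca{V})^\op\times\Mon(\ca{V})\to\Mon(\ca{V})^\op$ appearing in Lemma~\ref{actionH}. So the strategy is to invoke Proposition~\ref{param}: a monoidal structure on a parametrised right adjoint $G$ corresponds bijectively to an opmonoidal structure on its parametrised left adjoint $F$. Here $F=[-,-]^\op$ and $G=P$, so it suffices to supply $F$ with an opmonoidal structure and observe that the braiding is preserved.

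The first step is to identify precisely the adjunction in the form demanded by Proposition~\ref{param}. We have $[-,B]^\op\dashv P(-,B)$ for each fixed $B$ (Theorem~\ref{meascomonthm}), which exhibits $[-,-]^\op$ and $P(-,-)$ as parametrised adjoints in the bifunctorial sense of the proposition. Next I would invoke Lemma~\ref{Hsymmetriclax} (together with Lemma~\ref{actionH}), which, under the standing assumption that the braiding is a symmetry, tells us that $[-,-]^\op$ is a \emph{symmetric opmonoidal} functor between the symmetric monoidal categories $\Comon(\ca{V})^\op\times\Mon(\ca{V})$ and $\Mon(\ca{V})^\op$. This is exactly the opmonoidal input that Proposition~\ref{param} consumes.

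Applying Proposition~\ref{param} then transports the opmonoidal structure on $[-,-]^\op$ across the adjunction to a monoidal structure on $P$, giving the coherent comonoid morphisms of the shape~\eqref{eq:50}. Finally, to upgrade ``monoidal'' to ``braided monoidal'', I would note that the braided (indeed symmetric) compatibility is inherited under the same correspondence: since $[-,-]^\op$ respects the symmetries on domain and codomain (Lemma~\ref{Hsymmetriclax}), the transported structure on $P$ respects the symmetry on $\Mon(\ca{V})^\op\times\Mon(\ca{V})$ and the one on $\Comon(\ca{V})$ (which is symmetric by Remark~\ref{rmk:2}, as $\ca{V}$ is symmetric). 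In the language already used in Theorem~\ref{braidthm} and Theorem~\ref{mainthm}, the braiding compatibility of the opmonoidal action corresponds precisely to the braiding compatibility of its adjoint, so no separate verification is needed.

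The main obstacle, and the only point requiring care, is bookkeeping with the opposite categories and the direction of the braiding. Because $P$ is contravariant in its first variable, the relevant symmetric monoidal structure on the domain $\Comon(\ca{V})^\op\times\Mon(\ca{V})$ uses $c^{-1}$ in the first factor and $c$ in the second (as in the statement of Theorem~\ref{mainthm}); since $\ca{V}$ is symmetric we have $c^{-1}=c$, so this distinction collapses and the braiding compatibility is unambiguous. Once this is tracked correctly, the result is immediate from the cited propositions, and there is no substantial computation to perform.
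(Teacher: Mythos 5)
Your proposal is correct and follows essentially the same route as the paper: the paper likewise deduces the corollary by noting that, via Lemma~\ref{Hsymmetriclax}, the action of Lemma~\ref{actionH} is a symmetric opmonoidal functor, and then transporting this structure across the parametrised adjunction $[-,B]^{\op}\dashv P(-,B)$ of Theorem~\ref{meascomonthm} using Proposition~\ref{param}. Your extra bookkeeping about opposite categories and $c^{-1}=c$ is a sound (and welcome) elaboration of a point the paper leaves implicit, not a departure from its argument.
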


Finally, since $P$ is a monoidal
functor, it induces a functor
\begin{equation}
\Mon P:\Bimon(\ca{V})^\mathrm{op}
\times \mathbf{CommMon}(\ca{V})\to
\Bimon(\ca{V})
\end{equation}
where $\mathbf{CommMon}(\ca{V})=\Mon(\Mon(\ca{V}))$
is the category of commutative monoids, and of course
$\Mon(\Comon(\ca{V}))=\Bimon(\ca{V})$ is the category of
bimonoids.
This is still a braided monoidal functor
by Remark~\ref{rmk:2} and so, since
$\Mon(\Bimon(\ca{V}))= \mathbf{CommBimon}(\ca{V})$,
we get the following result.

\begin{cor} \label{dualbimonoid}
Suppose $\ca{V}$ is a locally presentable symmetric monoidal closed category.
If $B$ is a (cocommutative) bimonoid
and $A$ a commutative monoid, then $P(B,A)$
has a canonical structure of a (commutative) bimonoid.
In particular, the
finite dual $B^\circ$ is a (commutative) bimonoid.
\end{cor}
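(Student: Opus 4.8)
The plan is to obtain the result as a formal consequence of the fact that $P$ is a braided (indeed symmetric, since $\ca{V}$ is) monoidal functor, established in Corollary~\ref{Plaxmon}, by applying the principle recorded in Section~\ref{background} that monoidal functors preserve monoids and hence induce functors $\Mon(-)$ on categories of monoids.

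First I would unwind the basic identifications of iterated (co)monoid categories. A monoid in $\Comon(\ca{V})$ is exactly a bimonoid, so $\Mon(\Comon(\ca{V}))=\Bimon(\ca{V})$; a monoid in a product is a pair of monoids, and a monoid in $\Mon(\ca{V})^\op$ is precisely a comonoid in $\Mon(\ca{V})$, i.e.\ a bimonoid, so $\Mon(\Mon(\ca{V})^\op)=\Bimon(\ca{V})^\op$. Finally, by the Eckmann--Hilton argument a monoid in the monoidal category $\Mon(\ca{V})$ is a commutative monoid, giving $\Mon(\Mon(\ca{V}))=\mathbf{CommMon}(\ca{V})$. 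Applying $\Mon(-)$ to the monoidal functor $P$ then produces the functor $\Mon P\colon\Bimon(\ca{V})^\op\times\mathbf{CommMon}(\ca{V})\to\Bimon(\ca{V})$ displayed before the statement, which immediately yields the non-parenthetical claim: $P(B,A)$ is canonically a bimonoid whenever $B$ is a bimonoid and $A$ a commutative monoid.

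For the parenthetical cocommutative/commutative refinement I would iterate the construction once more. Since $\ca{V}$ is symmetric, Remark~\ref{rmk:2} endows all the relevant categories of (co)monoids with symmetric monoidal structures compatible with the forgetful functors, and one checks that $\Mon P$ is again a symmetric monoidal functor. Applying $\Mon(-)$ to $\Mon P$ and identifying iterated categories---$\Mon(\Bimon(\ca{V}))=\mathbf{CommBimon}(\ca{V})$; a comonoid in $\Bimon(\ca{V})$ being a cocommutative bimonoid, so that $\Mon(\Bimon(\ca{V})^\op)=\mathbf{CocommBimon}(\ca{V})^\op$; and a monoid in commutative monoids being again commutative, so that $\Mon(\mathbf{CommMon}(\ca{V}))=\mathbf{CommMon}(\ca{V})$---yields a functor $\mathbf{CocommBimon}(\ca{V})^\op\times\mathbf{CommMon}(\ca{V})\to\mathbf{CommBimon}(\ca{V})$, giving the commutative bimonoid structure on $P(B,A)$ when $B$ is cocommutative and $A$ commutative. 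The closing ``in particular'' clause follows by specialising to $A=I$, the monoidal unit carrying its canonical (commutative) monoid structure, so that $B^\circ=P(B,I)$ inherits the asserted structure.

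The routine steps---associativity and unit coherence of the induced (co)multiplications---are handled automatically once the functor is known to be monoidal, so no diagram chasing is needed. The main point to get right is the bookkeeping of the iterated (co)monoid identifications, in particular the Eckmann--Hilton-type arguments showing that $\Mon(\Mon(\ca{V}))$ consists of \emph{commutative} monoids and that $\Comon(\Bimon(\ca{V}))$ consists of \emph{cocommutative} bimonoids; these rely essentially on the symmetry of the braiding and on the compatibility of the symmetric monoidal structures on successive categories of monoids, which is exactly what Remark~\ref{rmk:2} supplies.
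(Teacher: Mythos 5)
Your proposal is correct and follows essentially the same route as the paper: it invokes Corollary~\ref{Plaxmon} that $P$ is a braided monoidal functor, applies $\Mon(-)$ with the identifications $\Mon(\Comon(\ca{V}))=\Bimon(\ca{V})$, $\Mon(\Mon(\ca{V})^\op)=\Bimon(\ca{V})^\op$ and $\Mon(\Mon(\ca{V}))=\mathbf{CommMon}(\ca{V})$ to get $\Mon P$, and then applies $\Mon(-)$ once more (using Remark~\ref{rmk:2} and $\Mon(\Bimon(\ca{V}))=\mathbf{CommBimon}(\ca{V})$) for the parenthetical refinement. Your write-up merely spells out the Eckmann--Hilton bookkeeping and the specialisation $A=I$ in more detail than the paper's terse presentation.
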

Note that the second part is also proved, in a much different way,
in \cite{Onbimeasurings} for the
case $\ca{V}=\Mod_R$.

\begin{rmk}
    \label{rmk:6}
  When $(B,\iota,\mu)$ is a commutative monoid in
  $\mathcal{V}$, $[-,B]\colon\Comon(\mathcal{V})\to\Mon(\mathcal{V})^{\mathrm{op}}$ is
  an opmonoidal functor; its structure is given by the morphisms in
  $\mathcal{V}$
  \begin{equation}
    \label{eq:79}
    \chi\colon [A,B]\otimes[A',B]\longrightarrow[A\otimes A',B\otimes B]\xrightarrow{[1,\mu]}
    [A\otimes A',B] \qquad
    \chi_0\colon I\cong[I,I]\xrightarrow{[I,\iota]}[I,B].
  \end{equation}
  If we denote the counit of the adjunction $[-,B]\dashv P(-,B)$ by the morphism in
  $\Mon(\mathcal{V})$
  \begin{equation}
    \label{eq:80}
    \eta_A\colon A\longrightarrow[P(A,B),B],
  \end{equation}
  then the monoidal structure on the right adjoint $P(-,B)$ is given by the morphisms
  \begin{equation}
    \label{eq:81}
    \psi\colon P(A,B)\otimes P(A',B)\longrightarrow P(A\otimes A',B)\qquad
    \psi_0\colon I\to P(I,B)
  \end{equation}
  defined as the unique ones that make the following diagrams
  commute.
    \begin{equation}
      \label{eq:82}
      \xymatrixcolsep{.3cm}
      \diagram
      A\otimes A'\ar[r]^-{\eta_{A\otimes A'}}\ar[d]_{\eta_A\otimes\eta_{A'}}&
      [P(A\otimes A',B),B]\ar[d]^{[\psi,B]}\\
      [P(A,B),B]\otimes[P(A',B),B]\ar[r]^-{\chi}&
      [P(A,B)\otimes P(A',B),B]
      \enddiagram
      \diagram
      I\ar[r]^-{\eta_I}\ar[d]_\iota&
      [P(I,B),B]\ar[d]|{[\psi_0,B]}\\
      B\ar[r]^-{\sim}&
      [I,B]
      \enddiagram
    \end{equation}
\end{rmk}

\section{Comodules of universal measuring coalgebras}
\label{sec:calc-meas-coalg}

Having established the enrichment of the category
of monoids in the category of comonoids via the universal measuring comonoid,
in this section we study these objects primarily from the point of view
of their comodules or {corepresentations}, exhibiting further properties along the way.

\subsection{The finite dual as a subobject of a cofree comonoid}
\label{sec:finite-dual-as}

If $\mathcal{V}$ is a locally presentable monoidal category, it is not hard to
show that free monoids exist in $\mathcal{V}$, and then, $\Mon(\mathcal{V})$ becomes monadic
over $\mathcal{V}$. We say only a few words about the proof. Since both $\Mon(\mathcal{V})$ and
$\mathcal{V}$ are locally presentable (Proposition~\ref{moncomonadm}),
it suffices to know that the
forgetful functor from the former to the latter is continuous and accessible (by Remark~\ref{rmk:7});
see also~\cite[Thm.~5.32]{Kelly} for a more
general result. The fact that the forgetful functor preserves
$\kappa$-filtered colimits, for some regular cardinal $\kappa$, can be easily
verified using the fact that the tensor product of $\mathcal V$ does so. This concludes our
sketch of a proof.

Easier still is to prove the fact that cofree comonoids exist in any locally
presentable monoidal category $\mathcal{V}$; for, the forgetful functor from
$\Comon(\mathcal{V})$ to $\mathcal{V}$ is cocontinuous, and thus a left adjoint
again by Remark~\ref{rmk:7}.

We shall denote the free monoid on $X\in \mathcal{V}$ by $T(X)$. As the notation
suggests, the free monoid in the category of $k$-modules, for a commutative ring
$k$, is the tensor algebra. The cofree comonoid on $X$ we shall denote by $S(X)$.

In this section $\mathcal{V}$ will be a locally presentable \emph{braided}
monoidal closed category.

\begin{lem}
  \label{l:2}
  For any monoid $B$ and any object $X\in \mathcal V$, $P(T(X),B)\cong
  S([X,B])$. In particular, $T(X)^\circ\cong S([X,I])$.
\end{lem}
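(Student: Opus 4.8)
The plan is to establish the isomorphism via the Yoneda lemma in $\Comon(\mathcal{V})$: since both $P(T(X),B)$ and $S([X,B])$ are comonoids, it suffices to produce an isomorphism $\Comon(\mathcal{V})(C,P(T(X),B))\cong\Comon(\mathcal{V})(C,S([X,B]))$ natural in the comonoid $C$. Each side will be reduced to a hom-set in $\mathcal{V}$ by unwinding the two relevant adjunctions, after which the two descriptions will be matched using the closed structure and the braiding.

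First I would compute the left-hand side. The defining natural isomorphism~\eqref{meascomon} of the universal measuring comonoid gives $\Comon(\mathcal{V})(C,P(T(X),B))\cong\Mon(\mathcal{V})(T(X),[C,B])$, where $[C,B]$ carries its convolution monoid structure. Since $T$ is left adjoint to the forgetful functor $U\colon\Mon(\mathcal{V})\to\mathcal{V}$ (free monoids exist by local presentability, as noted above), this is in turn isomorphic to $\mathcal{V}(X,U[C,B])$. The forgetful functor sends the convolution monoid $[C,B]$ to the internal hom of the underlying objects, so $U[C,B]=[UC,UB]$, and the left-hand side becomes $\mathcal{V}(X,[UC,UB])$.

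Next I would compute the right-hand side. As $S$ is right adjoint to the forgetful functor $U\colon\Comon(\mathcal{V})\to\mathcal{V}$, the cofree-comonoid adjunction gives $\Comon(\mathcal{V})(C,S([X,B]))\cong\mathcal{V}(UC,[X,UB])$. It then remains to identify $\mathcal{V}(X,[UC,UB])$ with $\mathcal{V}(UC,[X,UB])$ naturally in $C$; this is the self-duality of the internal hom in a braided monoidal closed category, since both are naturally isomorphic to $\mathcal{V}(X\otimes UC,UB)\cong\mathcal{V}(UC\otimes X,UB)$ via the tensor-hom adjunctions and the braiding $c_{X,UC}$. All the isomorphisms in sight are natural in $C$ (the braiding being natural), so Yoneda yields $P(T(X),B)\cong S([X,B])$. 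The special case $T(X)^\circ=P(T(X),I)\cong S([X,I])$ follows by taking $B=I$.

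The computation is essentially a bookkeeping of adjunctions, so there is no deep obstacle; the one point requiring care is the correct handling of the forgetful functors---specifically, recognising that the underlying object of the convolution monoid $[C,B]$ is precisely the internal hom $[UC,UB]$, and that the free- and cofree-object adjunctions land in $\mathcal{V}$ on underlying objects---together with ensuring that the symmetry isomorphism $\mathcal{V}(X,[UC,UB])\cong\mathcal{V}(UC,[X,UB])$ is natural in the variable argument $C$ rather than merely in the fixed argument $X$.
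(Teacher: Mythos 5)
Your proof is correct and is essentially the paper's own argument: the paper notes that the square \eqref{Hcont} of left adjoints commutes and invokes uniqueness of right adjoints, and your chain of hom-set isomorphisms (measuring adjunction, free--forgetful, cofree, tensor--hom plus braiding) followed by Yoneda is precisely the pointwise unfolding of that statement. In particular, your identification $U[C,B]=[UC,UB]$ is the commutativity of \eqref{Hcont}, and your symmetry step $\mathcal{V}(X,[UC,UB])\cong\mathcal{V}(UC,[X,UB])$ is the adjunction $[-,UB]^{\mathrm{op}}\dashv[-,UB]$ implicit in the paper's appeal to the four right adjoints.
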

\begin{proof}
  Consider the commutative diagram~\eqref{Hcont}. All four functors have a right
  adjoint, thus the diagram formed by the right adjoints commutes up to natural
  isomorphism, whose component at $X$ has domain and codomain those of the
  statement.
\end{proof}
Let $V$ be the forgetful $\Mon(\ca{V})\to\ca{V}$.
The functor $P(-,B)$ sends colimits in $\Mon(\mathcal{V})$ to limits in
$\Comon(\mathcal{V})$ by adjointness. In particular,
it sends the canonical
diagram $T^2V(A)\rightrightarrows TV(A)\to A$ that exhibits a monoid $A$ as
coequaliser of free monoids, into an equaliser
\begin{equation}
  \label{eq:23}
  P(A,B)\rightarrowtail{}S[V(A),V (B)]\rightrightarrows S[VT(A),V(B)].
\end{equation}
In the case when $\mathcal{V}$ is the category of $k$-vector spaces and $B=k$,
this equaliser exhibits $A^\circ$ as a subcoalgebra of the cofree coalgebra on
$A^*$. 
Composing with the counit $S\Rightarrow 1$ of the cofree coalgebra comonad,
we obtain a morphism
\begin{equation}
A^\circ \longrightarrow A^*\label{eq:3}
\end{equation}
that is the classical injection of the finite dual into the dual
space~\cite{Sweedler}.

\subsection{Coendomorphism comonoids}
\label{sec:coend-comon}

Recall from the background Section \ref{background} the notion of a dual object.
The \emph{coendomorphism comonoid} of an object $X$ with left dual $X^\vee$ is the
object $X^\vee\otimes X$, with comultiplication $X\otimes\coev \otimes X$ and counit
$\ev$. We shall denote it by $\operatorname{coend}(X)$. These comonoids are
useful to us because $C$-comodule structures $X\to X\otimes C$ are in bijection
with comonoid morphisms $\operatorname{coend}(X)\to C$. In particular,
the coendomorphism coalgebras offer a reinterpretation of the
so-called \emph{fundamental theorem of coalgebras} below.

Recall that a set of objects $\mathcal{G}\subset \operatorname{ob}\mathcal{C}$
is strongly generating if, the functors
$\{\mathcal{C}(G,-)\colon\mathcal{C}\to\mathbf{Set}\}_{G\in\mathcal{G}}$ are
jointly conservative, ie if a morphism $f$ is invertible whenever $\mathcal{C}(G,f)$
is a invertible for all $G\in\mathcal{G}$. See~\cite[\S 3.6]{Kelly}.

\begin{lem}
  \label{l:4}
  When $\mathcal{V}$ is the category of
  $k$-vector spaces, the family of coendomorphism coalgebras
  $\{\operatorname{coend}(k^n)\}_{n\geq 1}$ is strongly generating in
  $\mathbf{Coalg}_k$.
\end{lem}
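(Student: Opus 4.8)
The plan is to exploit the correspondence recalled just above---that a comonoid morphism $\operatorname{coend}(X)\to C$ is the same datum as a $C$-comodule structure on $X$---in order to translate the representable functors into comodule data, and then to feed in the fundamental theorem of coalgebras. Concretely, for each $n\geq 1$ the set $\mathbf{Coalg}_k(\operatorname{coend}(k^n),C)$ is identified with the set of $C$-comodule structures $\rho\colon k^n\to k^n\otimes C$, and for a coalgebra map $f\colon C\to D$ the function $\mathbf{Coalg}_k(\operatorname{coend}(k^n),f)$ becomes post-composition $\rho\mapsto(1\otimes f)\rho$. Thus the strong-generation hypothesis, that $f$ induce a bijection for every $n$, says exactly that every finite-dimensional $D$-comodule structure lifts uniquely along $f$ to a $C$-comodule structure; I must deduce from this that $f$ is invertible.

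The two external inputs are: (i) the \emph{fundamental theorem of coalgebras}, that every coalgebra is the directed union of its finite-dimensional subcoalgebras, so that the finite-dimensional coalgebras are precisely the finitely presentable objects of the locally presentable category $\mathbf{Coalg}_k$ (Proposition~\ref{moncomonadm}) and therefore form a strongly generating family; and (ii) that each finite-dimensional coalgebra $E$, say of dimension $n$, is a quotient $q\colon\operatorname{coend}(k^n)\twoheadrightarrow E$. For (ii) one dualises the faithful regular representation $E^*\hookrightarrow\operatorname{End}_k(E)\cong\mathbf{Mat}_n(k)$ of the finite-dimensional unital algebra $E^*$, using $\operatorname{coend}(k^n)\cong\mathbf{Mat}_n(k)^*$; equivalently, $q$ is the image factorisation of the coalgebra map $\operatorname{coend}(k^n)\to C$ classifying the regular $E$-comodule. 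Granting (i), it suffices to promote the hypothesis from the coends to all finite-dimensional coalgebras: I will show $\mathbf{Coalg}_k(E,f)$ is bijective for every finite-dimensional $E$, and then invoke the joint conservativity in (i).

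Injectivity of $\mathbf{Coalg}_k(E,f)$ is immediate: if $g,g'\colon E\to C$ satisfy $fg=fg'$, precomposing with $q$ and using injectivity of $\mathbf{Coalg}_k(\operatorname{coend}(k^n),f)$ gives $gq=g'q$, whence $g=g'$ since $q$ is epic. Running this over all finite-dimensional $E$ and invoking (i) once more (the inclusions of finite-dimensional subcoalgebras into any coalgebra are jointly epic) upgrades this to the statement that $\mathbf{Coalg}_k(-,f)$ is injective on \emph{all} coalgebras. For surjectivity of $\mathbf{Coalg}_k(E,f)$, given $h\colon E\to D$ I first lift $hq$ to some $k\colon\operatorname{coend}(k^n)\to C$ with $fk=hq$, using surjectivity of the hypothesis; the remaining task is to descend $k$ through $q$. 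Writing $u,v\colon R\rightrightarrows\operatorname{coend}(k^n)$ for the kernel pair of the regular epimorphism $q$, one has $f(ku)=hqu=hqv=f(kv)$, so the injectivity of $\mathbf{Coalg}_k(R,f)$ just established forces $ku=kv$; hence $k$ factors as $k=gq$, and cancelling $q$ from $fgq=hq$ yields $fg=h$.

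The heart of the argument, and the step I expect to demand the most care, is this descent of the lift through $q$: it relies on the standard fact that surjective morphisms of coalgebras are regular epimorphisms---coequalisers of their kernel pairs---which holds because the forgetful functor $\mathbf{Coalg}_k\to\mathbf{Vect}$ is cocontinuous and surjections are regular epic in $\mathbf{Vect}$. Everything else is a formal consequence of the comodule reinterpretation together with the fundamental theorem of coalgebras; indeed the latter is precisely the geometric content that makes the finite-dimensional---equivalently, via (ii), the coendomorphism---coalgebras suffice to detect isomorphisms. Once $\mathbf{Coalg}_k(E,f)$ is bijective for every finite-dimensional $E$, strong generation of those objects gives that $f$ is invertible, and therefore the subfamily $\{\operatorname{coend}(k^n)\}_{n\geq 1}$ is itself strongly generating.
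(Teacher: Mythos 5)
Your skeleton (i)--(ii) and the injectivity half of the argument are sound and do echo the paper's ingredients: the paper likewise uses the fundamental theorem of coalgebras and the fact that a finite-dimensional coalgebra $E$ is its own coefficient coalgebra, i.e.\ a surjective image of $\operatorname{coend}(k^{\dim E})$. The gap is exactly at the step you yourself single out as the heart of the matter: the ``standard fact'' that surjective morphisms of coalgebras are regular epimorphisms is \emph{false}, and the justification you offer for it is a non sequitur. Cocontinuity of the forgetful functor $U\colon\mathbf{Coalg}_k\to\mathbf{Vect}_k$ cannot help here: $U$ is a \emph{left} adjoint (its right adjoint is the cofree coalgebra functor), so it creates colimits but does not preserve limits; in particular the kernel pair of $q$ in $\mathbf{Coalg}_k$ is not the vector-space pullback (which carries no natural coalgebra structure), so knowing that $Uq$ coequalises its $\mathbf{Vect}_k$-kernel pair produces no pair of \emph{coalgebra} maps whose coequaliser is $q$.

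The statement itself fails. Let $T_2(k)\subset\operatorname{M}_{2\times 2}(k)$ be the upper-triangular matrices; this inclusion is the classical example of a non-surjective epimorphism of $k$-algebras (in $\operatorname{M}_{2\times2}\otimes_{T_2}\operatorname{M}_{2\times2}$ one computes $e_{21}\otimes 1=e_{22}\otimes e_{21}=1\otimes e_{21}$, so the multiplication map is invertible). Dualising gives a surjective coalgebra map $q\colon \operatorname{coend}(k^2)=\operatorname{M}_{2\times2}(k)^*\to T_2(k)^*$ which is a \emph{monomorphism} in $\mathbf{Coalg}_k$: for finite-dimensional $X$ the function $\mathbf{Coalg}_k(X,q)$ is the restriction map $\mathbf{Alg}_k(\operatorname{M}_{2\times2},X^*)\to\mathbf{Alg}_k(T_2,X^*)$, injective by the epimorphism property, and the general case follows by writing any coalgebra as the directed union of its finite-dimensional subcoalgebras. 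Since a regular epimorphism that is also a monomorphism is an isomorphism, and $q$ is not injective, $q$ is not a regular epimorphism; so in general your kernel-pair descent of $k$ through $q$ cannot be performed (and it cannot be patched by proving $f$ injective first, since linear injectivity of $f$ is precisely what fails for maps, like $q$ above, that only satisfy the injectivity half of your hypothesis). This pathology of monomorphisms in $\mathbf{Coalg}_k$ is exactly why the paper never lifts or descends individual morphisms along $f$: it proves instead that the single canonical map $\sum_n\mathbf{Coalg}_k(\operatorname{coend}(k^n),C)\cdot\operatorname{coend}(k^n)\to C$ is surjective, using $\mathrm{cf}(D)=D$ for finite-dimensional subcoalgebras $D\subseteq C$, and then invokes the characterisation of strong generators via extremal epimorphy of these canonical maps, with no kernel pairs or regular epimorphisms anywhere. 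To salvage your route you would have to prove that your particular quotients $q$ (duals of regular representations) are regular epimorphisms --- a genuinely nontrivial claim, given the counterexample for general surjections --- or else abandon the descent step in favour of an argument like the paper's.
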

\begin{proof}
  If $X$ is a finite-dimensional $C$-comodule, the image of the associated
  $X^*\otimes X\to C$ is called the \emph{coefficient space} or \emph{coalgebra
    of coefficients} of $X$, denoted by $\mathrm{cf}(X)$. It is the smallest
  subcoalgebra of $C$ for which $X$ is a comodule; see~\cite[\S 1.2]{MR0412221}.

  By the fundamental theorem of coalgebras~\cite[Thm.~2.2.1]{Sweedler}, $C$ is union of finite-dimensional
  subcoalgebras. It is not hard to see that, if $D\subset C$ is a finite
  dimensional subcoalgebra regarded as a $C$-comodule, then $\mathrm{cf}(D)=D$
  (for,
  evaluating $D^*\otimes D\to C$ on $\varepsilon_{D}\otimes d$ gives back
  $d$). Therefore, the morphism of coalgebras $\sum_D\operatorname{coend}(D)\to
  C$ induced by the morphisms $\operatorname{coend}(D)\to C$, for each finite
  dimensional subcoalgebra $D\subset C$, is surjective. Hence, the following
  morphism is surjective (where $S\cdot E$, for a set $S$ and a coalgebra $E$,
  denotes the copower, ie the coproduct of $S$-copies of $E$).
  \begin{equation}
    \label{eq:30}
    \sum_n\mathbf{Coalg}_k(\operatorname{coend}(k^n),C)\cdot
    \operatorname{coend}(k^n)\longrightarrow C
  \end{equation}
  In particular, \eqref{eq:30}~is an
  extremal epimorphism, ie it does not factor through any non-trivial subobject
  of $C$; for more information see the paragraph previous to~\cite[Prop.~4.6]{MR0240161},
  or~\cite[\S 8.7]{Kelly:StructuresFiniteLimits}. This is equivalent to saying that the
  coalgebras $\operatorname{coend}(k^n)$ form a strong generator
  (see~\cite[\S 8.7]{Kelly:StructuresFiniteLimits}).
\end{proof}

\subsection{Comodules over the universal measuring coalgebra}
\label{sec:comod-over-univ}
Recall from Section \ref{Kleislicats} the Kleisli construction for an enriched monad
on a braided monoidal closed $\ca{V}$. We will be interested in the enriched monad
$T=(-\otimes B)$ induced by tensoring with a monoid $B$; in this case we will
abbreviate the categories of Kleisli and of Eilenberg-Moore algebras by $\mathcal{V}_B$ and
$\mathcal{V}^B$. The former always has tensor products by objects of
$\mathcal{V}$ (in the sense of~\cite[\S 3.7]{Kelly}),
since the universal $\mathcal{V}\to\mathcal{V}_B$ is a left
adjoint; the tensor product of $X\in\mathcal{V}_B$ by $Z\in\mathcal{V}$ is
$Z\otimes X$. As is always the case, the base category $\mathcal{V}$ acts on
$\mathcal{V}_B$ on the left by tensor products.
The $\mathcal{V}$-monad $(A\otimes\mathbin{-})$ extends to a
$\mathcal{V}$-monad on $\mathcal{V}_B$ and lifts to a $\mathcal{V}$-monad on
$\mathcal{V}^B$ thanks to the isomorphism $(A\otimes X)\otimes B\cong
A\otimes(X\otimes B)$.

\begin{prop}
  \label{prop:7}
  Let $A$, $B$ be two monoids in the locally presentable braided monoidal closed
  category $\mathcal{V}$ and $X$ an object. There is a bijection between:
  \begin{enumerate}
  \item \label{item:16} Algebra structures on $X$, for the monad
    $(A\otimes\mathbin{-})$ on $\mathcal{V}_B$.
  \item \label{item:13} Monoid morphisms $A\to\mathcal{V}_{B}(X,X)=[X,X\otimes B]$.
  \item \label{item:15} Algebra structures on $X\otimes B$, for the monad
    $(A\otimes\mathbin{-})$ on $\mathcal{V}^B$.
  \item \label{item:14} Monoid morphisms $A\to\mathcal{V}^{B}(X\otimes B,X\otimes B)$.
  \end{enumerate}
  If $X$ has a dual, then the above data are equivalent to:
  \begin{enumerate}[resume]
  \item \label{item:6} Right $P(A,B)$-comodule structures on $X$.
  \end{enumerate}
\end{prop}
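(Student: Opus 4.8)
The plan is to prove the four unconditional equivalences first and then graft on the fifth using dualizability. The backbone is a single general principle: when a monoidal category $\mathcal{V}$ acts on a tensored $\mathcal{V}$-category $\mathcal{D}$ by its tensor product, then for a monoid $A$ the algebra structures for the induced monad $(A\otimes\mathbin{-})$ on an object $D$ coincide with monoid morphisms $A\to\mathcal{D}(D,D)$. Indeed, an algebra structure is a morphism $A\otimes D\to D$ in $\mathcal{D}$, i.e. an element of $\mathcal{D}(A\otimes D,D)$, which the tensor adjunction identifies with a morphism $A\to\mathcal{D}(D,D)$ in $\mathcal{V}$; the two monad axioms transpose precisely to the unit and multiplication conditions making this morphism a monoid map. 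Applying the principle to $\mathcal{D}=\mathcal{V}_B$ with $D=X$ yields \ref{item:16}$\Leftrightarrow$\ref{item:13}, using $\mathcal{V}_B(X,X)=[X,X\otimes B]$; applying it to $\mathcal{D}=\mathcal{V}^B$ with $D=X\otimes B$ yields \ref{item:15}$\Leftrightarrow$\ref{item:14}.

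To connect the Kleisli and Eilenberg--Moore sides I would invoke the comparison $\mathcal{V}$-functor $K$ of \eqref{eq:76}. Since $K$ is full and faithful and sends $X$ to $X\otimes B$, it furnishes an isomorphism of monoids $\mathcal{V}_B(X,X)\cong\mathcal{V}^B(X\otimes B,X\otimes B)$, which is exactly the isomorphism underlying \eqref{eq:58}. Post-composition with it identifies monoid morphisms out of $A$, giving \ref{item:13}$\Leftrightarrow$\ref{item:14}; alternatively, because $K$ is a $\mathcal{V}$-functor intertwining the two lifts of $(A\otimes\mathbin{-})$, it matches algebra structures directly, giving \ref{item:16}$\Leftrightarrow$\ref{item:15}. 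Either route makes the first four items equivalent.

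For item \ref{item:6} I would run the chain: a right $P(A,B)$-comodule structure $X\to X\otimes P(A,B)$ corresponds, by the coendomorphism comonoid property of Section~\ref{sec:coend-comon} (valid since $X$ is dualizable), to a comonoid morphism $\operatorname{coend}(X)\to P(A,B)$, which in turn corresponds, by the defining adjunction of Theorem~\ref{meascomonthm}, to a monoid morphism $A\to[\operatorname{coend}(X),B]$. It then remains to produce a monoid isomorphism $[\operatorname{coend}(X),B]\cong[X,X\otimes B]=\mathcal{V}_B(X,X)$, which composes the chain with item \ref{item:13}; this last identification is the main obstacle. Writing $\operatorname{coend}(X)=X^\vee\otimes X$, dualizability rewrites the underlying object $[X^\vee\otimes X,B]$ as $[X,X\otimes B]$ up to braiding, but the genuine work is checking that the convolution product on $[\operatorname{coend}(X),B]$ — assembled from the comultiplication $1\otimes\coev\otimes 1$ and counit $\ev$ of $\operatorname{coend}(X)$ together with the multiplication of $B$ — agrees under this isomorphism with the Kleisli composition \eqref{eq:54} for the monad $(-\otimes B)$. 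I expect the two triangular identities of the dual pair to be exactly what forces the coevaluation-driven comultiplication to reproduce Kleisli composition, with the counit supplying the unit; this is the generalisation, twisted by $B$, of the classical duality between the comatrix coalgebra $X^\vee\otimes X$ and the endomorphism algebra, and verifying its coherence — rather than any of the transpositions above — is where the real computation lies.
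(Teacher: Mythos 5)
Your proposal follows essentially the same route as the paper's own proof: items (1)$\Leftrightarrow$(2) and (3)$\Leftrightarrow$(4) via the universal property of tensors in the tensored $\mathcal{V}$-categories $\mathcal{V}_B$ and $\mathcal{V}^B$, the Kleisli/Eilenberg--Moore bridge via the full and faithful comparison $\mathcal{V}$-functor $K$, and item (5) via the chain of correspondences between comodule structures $X\to X\otimes P(A,B)$, comonoid morphisms $\operatorname{coend}(X)\to P(A,B)$, and monoid morphisms $A\to[\operatorname{coend}(X),B]\cong[X,X\otimes B]$. The monoid isomorphism $[X,X\otimes B]\cong[\operatorname{coend}(X),B]$ (Kleisli composition versus convolution), which you flag as the remaining computation resting on the triangular identities, is precisely the step the paper also asserts without detailed verification, so your argument matches the published one in both structure and level of detail.
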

It may be instructive to spell out the properties that a morphism
$A\otimes X\to X$ has to satisfy in order to be an algebra structure on
$X\in\mathcal{V}_B$ for the monad $(A\otimes\mathbin{-})$, as in
the item~\ref{item:16} of the above proposition. It is a morphism $\psi\colon
A\otimes X\to X\otimes B$ in $\mathcal{V}$ that makes the following pair of
diagrams commute.
\begin{equation}
  \label{eq:26}
  \xymatrix{
    A\otimes A\otimes X\ar[r]^{1\otimes\psi}\ar[d]_{\mu\otimes1}&
    A\otimes X\otimes B\ar[r]^-{\psi\otimes 1} &
    X\otimes B\otimes B\ar[d]^{1\otimes \mu}\\
    A\otimes X\ar[rr]^-\psi&&
    X\otimes B
  }
  \qquad
  \xymatrix{
    X\ar[d]_{\eta\otimes 1}\ar@{=}[r]&X\ar[d]^{1\otimes \eta}\\
    A\otimes X\ar[r]^-\psi&X\otimes B
  }
\end{equation}
\begin{proof}
  Morphisms $\xi\colon A\otimes X\to X$ in $\mathcal{V}_B$ are in bijection with
  morphisms $\hat\xi\colon A\to\mathcal{V}_T(X,X)$ in $\mathcal{V}$, by the
  universal property of the tensor product with objects of $\mathcal{V}$. Under
  this correspondence, $\xi$ is an an algebra structure for $X$ if and only if
  $\hat\xi$ is a monoid morphism in $\mathcal{V}$, where the multiplication in
  its codomain is composition. This proves the equivalence of
  (\ref{item:16})~and~(\ref{item:13}). The equivalence between
  (\ref{item:15})~and~(\ref{item:14}) holds for precisely the same reason, while
  the equivalence of (\ref{item:13})~and~(\ref{item:14}) is a consequence of the
  full and faithful comparison $\mathcal{V}$-functor
  $\mathcal{V}_B\rightarrowtail\mathcal{V}^B$.

  If $X$ has a (left) dual $X^\vee$, the isomorphism $[X,X\otimes
  B]\cong[X^\vee\otimes X,B]$ becomes an isomorphism of monoids when the domain
  has the composition of $\mathcal{V}_B$ as multiplication and the codomain has
  the convolution multiplication induced by the comonoid
  $\operatorname{coend}(X)=X^\vee\otimes X$ as in Section \ref{sec:coend-comon},
  and the monoid $B$. Thus, a monoid morphism as
  in~(\ref{item:13}) can equally be given by a monoid morphism
  $A\to[\operatorname{coend}(X),B]$, and therefore by a comonoid morphism
  $\operatorname{coend}(X)\to P(A,B)$. This corresponds to a morphism $X\to
  X\otimes P(A,B)$ satisfying the comodule axioms.
\end{proof}

\begin{df}
  \label{df:3}
  Given two monoids $A$ and $B$, the category $\avb$ has objects
  pairs $(X,\psi)$, where $X\in\mathcal{V}$ and $\psi\colon A\otimes X\to
  X\otimes B$ satisfies the two axioms depicted in the previous paragraph; it has
  morphisms $(X,\psi)\to(X',\psi')$ morphisms $f\colon X\to X'$ in $\mathcal{V}$
  that satisfy $(f\otimes B)\cdot\psi=\psi'\cdot(A\otimes f)$. Composition and
  identities are the obvious ones, so there is a faithful forgetful functor
  $\avb\to\mathcal{V}$.
\end{df}
The category just defined fits in the following pullback diagram, where
$(\mathcal{V}_B)^{(A\otimes\mathbin{-})}$ is the category of algebras of the
monad $(A\otimes\mathbin{-})$ on $\mathcal{V}_B$, and the bottom arrow is the
universal Kleisli functor.
\begin{equation}
  \label{eq:24}
  \diagram
    \avb\ar[r]\ar[d]&
    (\mathcal{V}_B)^{(A\otimes\mathbin{-})}\ar[d]\\
    \mathcal{V}\ar[r]&
    \mathcal{V}_B
  \enddiagram
\end{equation}
If we recall the notion of a dualizable object from the background
Section \ref{background}, we obtain the following result.
\begin{cor}
  \label{cor:3}
  There is an isomorphism between the categories of dualizable right
  $P(A,B)$-comodules and that of dualizable objects of $\avb$;
  furthermore, the isomorphism commutes with the respective forgetful functors
  into $\mathcal{V}$.
\end{cor}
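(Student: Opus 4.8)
The plan is to leverage Proposition~\ref{prop:7} directly, which already establishes a bijection between right $P(A,B)$-comodule structures on a \emph{dualizable} object $X$ and algebra structures (equivalently, the data $\psi\colon A\otimes X\to X\otimes B$) making $X$ an object of $\avb$. Corollary~\ref{cor:3} merely upgrades this object-level bijection to an \emph{isomorphism of categories} between the dualizable objects of $\Comod_{\mathcal{V}}(P(A,B))$ and the dualizable objects of $\avb$, compatibly with the forgetful functors to $\mathcal{V}$. So the real work is to check that the correspondence of Proposition~\ref{prop:7} is natural in $X$, i.e.\ that it respects morphisms, and that it is genuinely invertible as a functor.

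First I would make the object-level bijection into a functor in both directions. Given a morphism $f\colon X\to X'$ of dualizable $P(A,B)$-comodules, I would trace $f$ through the chain of identifications in the proof of Proposition~\ref{prop:7}: the comodule structure $X\to X\otimes P(A,B)$ corresponds to a comonoid morphism $\operatorname{coend}(X)\to P(A,B)$, hence to a monoid morphism $A\to[\operatorname{coend}(X),B]\cong[X,X\otimes B]=\mathcal{V}_B(X,X)$, and finally to the structure map $\psi\colon A\otimes X\to X\otimes B$. The key point is that each of these identifications is built from the \emph{dual pair} data $(\ev,\coev)$ for $X$ and is natural in the object; so the comodule-morphism condition on $f$ (namely $(f\otimes P(A,B))\cdot\rho_X=\rho_{X'}\cdot f$) translates precisely into the $\avb$-morphism condition $(f\otimes B)\cdot\psi=\psi'\cdot(A\otimes f)$ of Definition~\ref{df:3}. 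This is essentially a diagram chase using the triangle identities for the duality $X^\vee\dashv X$, together with the fact that the isomorphism $[X,X\otimes B]\cong[X^\vee\otimes X,B]$ is natural.

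Next I would verify functoriality and mutual inverseness: the assignments on objects and morphisms preserve identities and composition (immediate, since the correspondence is induced by a \emph{natural} isomorphism of hom-objects), and the two functors compose to the identity in both directions (this is exactly the bijectivity from Proposition~\ref{prop:7} applied objectwise, now seen to respect morphisms). Commutation with the forgetful functors into $\mathcal{V}$ is automatic, since at no stage does the construction alter the underlying object $X$ or the underlying morphism $f$ — only the structure carried on $X$ is transported. This last observation is what gives the ``furthermore'' clause for free.

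\textbf{The main obstacle} I anticipate is bookkeeping the naturality of the composite isomorphism $\Comod_{\mathcal{V}}(P(A,B))(X,X\otimes P(A,B))\cong\mathcal{V}(A\otimes X,X\otimes B)$ in the presence of the duality data, specifically confirming that the translation of the morphism condition does not secretly require $f$ itself to be dualizable or to commute with the chosen $(\ev,\coev)$. The resolution is that a morphism of comodules need not be compatible with any duality structure; one only uses that $X$ and $X'$ are \emph{individually} dualizable to form the respective coendomorphism comonoids, and the morphism condition is phrased entirely in $\mathcal{V}$ via $\psi,\psi'$, so no extra compatibility on $f$ is imposed. Once this is pinned down, the isomorphism of categories follows formally.
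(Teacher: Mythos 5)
Your plan matches the paper's (very terse) treatment: the paper also defines the isomorphism on objects via Proposition~\ref{prop:7} and ``leaves the rest to the reader''. But the details you supply for the morphism level contain a genuine gap, and it sits exactly where you declare the argument to be routine. Write $\langle\,,\rangle\colon A\otimes P(A,B)\to B$ for the universal pairing (the transpose of the unit $\eta_A\colon A\to[P(A,B),B]$ of~\eqref{eq:80}). Unwinding Proposition~\ref{prop:7}, a comodule structure $\rho\colon X\to X\otimes P(A,B)$ corresponds to $\psi=(X\otimes\langle\,,\rangle)\cdot(c_{A,X}\otimes P(A,B))\cdot(A\otimes\rho)$. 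Post-composition with the pairing thus defines a map $\Theta\colon\mathcal{V}(X,X'\otimes P(A,B))\to\mathcal{V}(A\otimes X,X'\otimes B)$, and a diagram chase shows $\Theta\bigl((f\otimes P(A,B))\cdot\rho\bigr)=(f\otimes B)\cdot\psi$ and $\Theta(\rho'\cdot f)=\psi'\cdot(A\otimes f)$. Hence every comodule morphism is an $\avb$-morphism: that direction is fine. The converse (fullness) asks you to deduce $(f\otimes P(A,B))\cdot\rho=\rho'\cdot f$ from the equality of their $\Theta$-images, i.e.\ it needs $\Theta$ to be \emph{injective}; transposing along the dual of $X'$, this is injectivity of post-composition with the canonical morphism $P(A,B)\to[A,B]$ on morphisms out of $X'^{\vee}\otimes X$. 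This is not a consequence of naturality or the triangle identities, and it is not available in general: the paper itself warns (Example~\ref{ex:12}, cf.~\eqref{eq:3}) that $H^\circ\to H^*$ need not be a monomorphism away from the field case. The universal property of $P(A,B)$ gives a bijection only between \emph{comonoid} morphisms $\operatorname{coend}(X)\to P(A,B)$ and monoid morphisms $A\to[\operatorname{coend}(X),B]$; the two morphisms you must compare, $(f\otimes P(A,B))\cdot\rho$ and $\rho'\cdot f$, are not comodule structures or comonoid maps, so the ``bijectivity from Proposition~\ref{prop:7} applied objectwise'' cannot be invoked for them. The obstacle you anticipated (whether $f$ must be compatible with the duality data) is a red herring; this injectivity is the real obstruction, and your proposal never addresses it.

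The gap can be closed, but it requires an additional idea rather than bookkeeping. When $\mathcal{V}$ is additive (which covers $\Mod_k$, graded and dg vector spaces, i.e.\ all the paper's running examples), one can argue as follows: the explicit formula for $\rho\mapsto\psi$ shows the bijection of Proposition~\ref{prop:7} is equivariant under transport of structure along isomorphisms of $\mathcal{V}$ and compatible with biproducts; equivariance already gives fullness on \emph{iso}morphisms, and for a general $f\colon X\to X'$ one applies this to the automorphism $\left(\begin{smallmatrix}1&0\\ f&1\end{smallmatrix}\right)$ of $X\oplus X'$, which is an $\avb$-automorphism of $(X,\psi)\oplus(X',\psi')$ if and only if $f$ is an $\avb$-morphism, and a comodule automorphism of $(X,\rho)\oplus(X',\rho')$ if and only if $f$ is a comodule morphism. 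Alternatively, one can add the hypothesis that the pairing $A\otimes P(A,B)\to B$ is nondegenerate in its second variable (i.e.\ $P(A,B)\to[A,B]$ is monic), which restores the classical argument. Some such input must be made explicit: as written, your translation of the morphism condition only goes one way.
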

The isomorphism of the previous corollary is given on objects by
Proposition~\ref{prop:7}. The rest of the details are left to the
reader. When $B=I$ we have:

\begin{cor}
  \label{cor:2}
  For a monoid $A$ in $\mathcal{V}$, the category of dualizable right
  $A^\circ$-comodules is isomorphic to the category of dualizable left
  $A$-modules.
\end{cor}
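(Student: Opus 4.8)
The plan is to obtain Corollary~\ref{cor:2} as the special case $B=I$ of Corollary~\ref{cor:3}. The key observation is that when $B=I$, the monoid $B$ is the monoidal unit, so the Kleisli and Eilenberg--Moore constructions for the monad $(-\otimes I)\cong\mathrm{id}$ collapse, and the category $\prescript{A}{}{\mathcal{V}}_I$ of Definition~\ref{df:3} reduces to something recognisable. Concretely, an object of $\prescript{A}{}{\mathcal{V}}_I$ is a pair $(X,\psi)$ with $\psi\colon A\otimes X\to X\otimes I\cong X$, and the two axioms in~\eqref{eq:26} become exactly the associativity and unit axioms for a left $A$-module structure $\psi\colon A\otimes X\to X$. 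Thus I would first record the identification $\prescript{A}{}{\mathcal{V}}_I\cong\Mod_{\mathcal{V}}(A)$, the category of left $A$-modules, noting that this isomorphism is compatible with the forgetful functors into $\mathcal{V}$ since on morphisms both sides are just $\mathcal{V}$-morphisms commuting with the action.

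Having made that identification, I would apply Corollary~\ref{cor:3} with $B=I$. That corollary gives an isomorphism, commuting with the forgetful functors into $\mathcal{V}$, between the category of dualizable right $P(A,I)$-comodules and the category of dualizable objects of $\prescript{A}{}{\mathcal{V}}_I$. Since $P(A,I)=A^\circ$ by the definition of the finite dual (as $A^\circ=P(A,I)$), and since $\prescript{A}{}{\mathcal{V}}_I\cong\Mod_{\mathcal{V}}(A)$, this immediately yields the isomorphism between dualizable right $A^\circ$-comodules and dualizable left $A$-modules. The final point to check is that the notion of ``dualizable'' matches up on both sides: an object is dualizable when its underlying object in $\mathcal{V}$ has a dual, and because the two isomorphisms involved both commute with the forgetful functors into $\mathcal{V}$, they preserve and reflect dualizability automatically.

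The main (and essentially only) obstacle is verifying that the two diagrams of~\eqref{eq:26} genuinely specialise to the module axioms when $B=I$. This is a routine check: one substitutes $B=I$, uses the unit constraint $X\otimes I\cong X$ and the unit axiom of the monoid $B=I$ (whose multiplication $\mu\colon I\otimes I\to I$ is the unit isomorphism), and confirms that the first diagram becomes the associativity pentagon for the action and the second becomes the unit law $\psi\cdot(\iota\otimes X)=1_X$. I do not expect any genuine difficulty here; the content of the corollary is entirely carried by Corollary~\ref{cor:3}, and what remains is to make the translation $B=I$ explicit and to invoke the compatibility of all isomorphisms with the forgetful functors so that dualizability is transported correctly.
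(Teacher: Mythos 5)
Your proposal is correct and matches the paper's own argument: the paper likewise proves this by setting $B=I$ (in Proposition~\ref{prop:7}, of which Corollary~\ref{cor:3} is the packaged categorical form), observing that the Kleisli category $\mathcal{V}_I$ collapses to $\mathcal{V}$ and that the structure maps $\psi\colon A\otimes X\to X\otimes I\cong X$ satisfying~\eqref{eq:26} are exactly left $A$-module structures. Your extra care about dualizability being transported via compatibility with the forgetful functors is exactly the content already built into Corollary~\ref{cor:3}, so nothing is missing.
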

\begin{proof}
  Setting $B=I$ in Proposition~\ref{prop:7}, the Kleisli $\mathcal{V}$-category
  $\mathcal{V}_B$ becomes just $\mathcal{V}$, and the data in the
  item~(\ref{item:16}) of the said proposition just an $A$-module structure on
  $X$.
\end{proof}

In the example when $\mathcal{V}$ is the category of vector spaces,
Corollary~\ref{cor:3} gives an alternative description of the category of finite%
-dimensional right $P(A,B)$-comodules, for any pair of algebras $A$, $B$.
\begin{cor}
  \label{cor:4}
  If $A$, $B$ are algebras over a field $k$, then
  \begin{equation}
    \label{eq:28}
    P(A,B)\cong\int^{(X,\psi)}X^*\otimes X
  \end{equation}
  where $(X,\psi)$ runs over all the objects of $\avb$ with
  $\dim_kX<\infty$.
\end{cor}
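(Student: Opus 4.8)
The plan is to obtain the formula by combining Corollary~\ref{cor:3} with the standard reconstruction of a coalgebra from its finite-dimensional comodules. Concretely, I would first prove the general fact that for \emph{any} $k$-coalgebra $C$ there is a canonical isomorphism
\[
  C\;\cong\;\int^{X}X^*\otimes X,
\]
where $X$ ranges over the category of finite-dimensional right $C$-comodules, the integrand $X^*\otimes X=\operatorname{coend}(X)$ is the coendomorphism coalgebra of Section~\ref{sec:coend-comon}, functorial in $X$ via $f\mapsto(f^*,f)$, and the coend is formed in $\mathbf{Coalg}_k$ (which exists, since that category is cocomplete). Granting this, Corollary~\ref{cor:3} applied to $C=P(A,B)$ supplies an isomorphism between the category of finite-dimensional right $P(A,B)$-comodules and the category of finite-dimensional objects of $\avb$, and this isomorphism commutes with the forgetful functors into $\mathbf{Vect}$. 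Since the integrand $X^*\otimes X$ depends only on the underlying vector space of $X$, the two coends agree and the corollary follows at once.

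It remains to establish the reconstruction isomorphism, which is where the work lies. Each finite-dimensional right $C$-comodule carries a structure map $X\to X\otimes C$, equivalently a comonoid morphism $\operatorname{coend}(X)\to C$ by the correspondence recalled in Section~\ref{sec:coend-comon}; these morphisms are dinatural in $X$ and hence assemble into a cowedge, inducing a comparison map $\kappa\colon\int^{X}X^*\otimes X\to C$ in $\mathbf{Coalg}_k$. To see that $\kappa$ is an isomorphism I would argue in two steps. For surjectivity, I would reuse the argument of Lemma~\ref{l:4}: by the fundamental theorem of coalgebras every $C$ is the union of its finite-dimensional subcoalgebras $D$, each such $D$ is a finite-dimensional comodule with $\mathrm{cf}(D)=D$, and the induced morphism $\sum_D\operatorname{coend}(D)\to C$ is an extremal epimorphism; as this factors through $\kappa$, the latter is an extremal epimorphism of coalgebras. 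For injectivity, I would identify the coend with the filtered colimit of the coefficient coalgebras: the dinaturality relations defining $\int^{X}X^*\otimes X$ collapse each $\operatorname{coend}(X)$ onto its image $\mathrm{cf}(X)\subseteq C$ and identify overlapping images, so that $\kappa$ becomes the canonical map from the directed union $\bigcup_X\mathrm{cf}(X)$ of these subcoalgebras to $C$, which is invertible once more by the fundamental theorem.

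The main obstacle is precisely the verification that $\kappa$ is injective, i.e.\ that the dinatural identifications cut the disjoint sum $\coprod_X\operatorname{coend}(X)$ down to exactly the union of coefficient coalgebras and no further. The essential input is that $\{\operatorname{coend}(k^n)\}_{n\ge1}$ is a strong generator in $\mathbf{Coalg}_k$ (Lemma~\ref{l:4}): strong generation guarantees that a coalgebra is determined by its finite-dimensional comodules together with their comodule morphisms, which is exactly the data indexing the coend. Once the reconstruction formula is in place, the passage to $\avb$ via Corollary~\ref{cor:3} is purely formal, since that isomorphism of categories is compatible with the forgetful functors to $\mathbf{Vect}$ and therefore carries $\operatorname{coend}(X)$ to $\operatorname{coend}(X)$ unchanged.
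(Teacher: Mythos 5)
Your overall route coincides with the paper's: Corollary~\ref{cor:3} transports the problem to the classical reconstruction of a $k$-coalgebra from its finite-dimensional comodules, and your transfer of the coend across that isomorphism of categories (using that it commutes with the forgetful functors, so the integrand $X^*\otimes X$ is unchanged) is correct and is exactly what the paper does. The difference is that the paper disposes of the reconstruction step by citation, whereas you undertake to prove it, so your proposal stands or falls with that proof. It falls short at precisely the point you flag as the main obstacle: injectivity of $\kappa$. Lemma~\ref{l:4} says that the functors $\mathbf{Coalg}_k(\operatorname{coend}(k^n),-)$ are jointly conservative, so to conclude that $\kappa$ is invertible along these lines you would need each function $\mathbf{Coalg}_k(\operatorname{coend}(k^n),\kappa)$ to be a bijection. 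Surjectivity of these functions is immediate, since any comodule structure $\operatorname{coend}(k^n)\to C$ factors as $\kappa$ composed with the corresponding cowedge component. But their injectivity asks whether two coalgebra maps $\operatorname{coend}(k^n)\rightrightarrows\int^{X}X^*\otimes X$ that become equal after composing with $\kappa$ are already equal, and the only evident route to this is via the injectivity of $\kappa$ itself---the very thing being proved. Likewise, the slogan that ``strong generation guarantees that a coalgebra is determined by its finite-dimensional comodules together with their morphisms'' is not a consequence of conservativity; it is a restatement of the reconstruction theorem. So the decisive step is asserted rather than proved, and the justification offered for it is circular.

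To close the gap you must actually carry out the dinaturality computation that your sketch attributes to the relations ``collapsing $\operatorname{coend}(X)$ onto $\mathrm{cf}(X)$'' (or else cite the reconstruction theorem, as the paper does). One way: given an element of the coend with $\kappa$-image $0$, dinaturality along the inclusions and projections of a direct sum lets you represent it by a single $\xi\in X^*\otimes X$; choosing a finite-dimensional subcoalgebra $D\supseteq\mathrm{cf}(X)$, the coaction $\rho\colon X\to X\otimes D$ is a morphism of comodules when $X\otimes D$ carries the coaction $1\otimes\Delta_D$, and since $(1\otimes\varepsilon_D)\rho=1_X$, dinaturality along $\rho$ moves $\xi$ into $\operatorname{coend}(X\otimes D)\cong\operatorname{coend}(D^{\oplus\dim X})$, hence (again by dinaturality along summand inclusions) into a sum of elements of $D^*\otimes D$. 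Finally, for each $\psi\in D^*$ the map $f_\psi=(\psi\otimes 1_D)\Delta\colon D\to D$ is a comodule endomorphism of $(D,\Delta)$, and dinaturality along $f_\psi$ applied to $\varepsilon_D\otimes d$ gives $\psi\otimes d\sim\varepsilon_D\otimes f_\psi(d)$; summing, any $\zeta\in D^*\otimes D$ is identified with $\varepsilon_D\otimes\kappa(\zeta)$, so $\kappa(\zeta)=0$ forces $\zeta\sim0$. This computation (or a citation standing in for it) is what your proposal is missing; with it in place, the rest of your argument goes through and agrees with the paper's.
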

\begin{proof}
  The forgetful functor $(\avb)_d\to\mathcal{V}$ from the category of
  dualizable objects of $\avb$ is, up to composing with an
  isomorphism, the forgetful functor from the category of dualizable
  $P(A,B)$-comodules. Then, the coalgebra can be reconstructed by the
  coend~\eqref{eq:28}; the ideas behind this reconstruction go back
  to~\cite{Saavedra:Tannakian}, but see for example~\cite{MR1623637} for a paper
  where coends are explicitly used.
\end{proof}
The corollary above holds for more general categories $\mathcal{V}$, as shown
in~\cite{McCrudden:Maschkean}, but we do not pursue that point.

\begin{cor}
  \label{cor:7}
  There is a bijection between right $P(A,B)$-comodule structures on $k^n$ and
  algebra morphisms $A\to\operatorname{M}_{n\times n}(B)$. There is a bijection
  between isomorphism classes of $n$-dimensional $P(A,B)$-comodules and the
  quotient of the set $\Alg_k(A,\operatorname{M}_{n\times n}(B))$ by the
  action of $\operatorname{GL}_n(k)$ on $\operatorname{M}_{n\times n}(B)$ by
  conjugation.
\end{cor}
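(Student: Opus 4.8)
The plan is to deduce both statements from Proposition~\ref{prop:7}, applied to the dualizable object $X=k^n$, together with an explicit identification of the relevant Kleisli endomorphism monoid with a matrix algebra. First I would invoke the equivalence of items~\ref{item:6} and~\ref{item:13} of Proposition~\ref{prop:7}: since $k^n$ is finite-dimensional and hence dualizable, right $P(A,B)$-comodule structures on $k^n$ are in bijection with monoid morphisms $A\to\mathcal{V}_B(k^n,k^n)=[k^n,k^n\otimes B]$.

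The key computation is then to identify this Kleisli endomorphism monoid $[k^n,k^n\otimes B]$ with the matrix algebra $\operatorname{M}_{n\times n}(B)$. As a vector space $[k^n,k^n\otimes B]\cong(k^n)^*\otimes k^n\otimes B\cong\operatorname{M}_{n\times n}(k)\otimes B\cong\operatorname{M}_{n\times n}(B)$, and I would record a linear map $f\colon k^n\to k^n\otimes B$ as the matrix $(f_{ij})$ with $f(e_j)=\sum_i e_i\otimes f_{ij}$. I would then unwind the Kleisli composition~\eqref{eq:54} for the monad $(-\otimes B)$: the composite of $f$ after $g$ is $k^n\xrightarrow{g}k^n\otimes B\xrightarrow{f\otimes 1}k^n\otimes B\otimes B\xrightarrow{1\otimes\mu}k^n\otimes B$, and a direct evaluation on basis vectors shows its matrix entries are $\sum_j f_{ij}g_{jk}$, that is, ordinary matrix multiplication in $\operatorname{M}_{n\times n}(B)$. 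Since the unit $\eta$ corresponds to the identity matrix, this is a monoid isomorphism $[k^n,k^n\otimes B]\cong\operatorname{M}_{n\times n}(B)$; composing with the bijection of the previous paragraph yields the first claim, that right $P(A,B)$-comodule structures on $k^n$ correspond to algebra morphisms $A\to\operatorname{M}_{n\times n}(B)$.

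For the second statement I would first note that every $n$-dimensional $P(A,B)$-comodule is isomorphic to one whose underlying space is exactly $k^n$, so it suffices to determine when two comodule structures on $k^n$ are isomorphic. Through the isomorphism of Corollary~\ref{cor:3}, comodule morphisms correspond to morphisms in the category $\avb$ of Definition~\ref{df:3}; thus an isomorphism of comodules on $k^n$ is an invertible $f\in\operatorname{GL}_n(k)$ satisfying $(f\otimes B)\cdot\psi=\psi'\cdot(A\otimes f)$, where $\psi,\psi'\colon A\otimes k^n\to k^n\otimes B$ encode the two structures. Writing $\phi,\phi'\colon A\to\operatorname{M}_{n\times n}(B)$ for the corresponding algebra morphisms, so that $\psi(a\otimes v)=\phi(a)v$, this intertwining condition becomes $f\,\phi(a)=\phi'(a)\,f$ for all $a\in A$ (viewing $f$ as a scalar matrix in $\operatorname{M}_{n\times n}(B)$ through the unit of $B$), equivalently $\phi'=f\phi f^{-1}$. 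Hence two algebra morphisms determine isomorphic comodules precisely when they differ by conjugation by an element of $\operatorname{GL}_n(k)$, and the second bijection follows.

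The only genuine content lies in the two translations of structure, namely that the Kleisli composition is matrix multiplication and that the comodule-isomorphism condition becomes conjugation; both are routine but convention-sensitive, so the one place I would take care is to pin down the order of multiplication in the Kleisli composite and the direction of conjugation, ensuring they match the identifications made above.
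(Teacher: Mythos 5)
Your proof is correct and follows essentially the same route as the paper's own: both deduce the statement from Proposition~\ref{prop:7} by identifying the Kleisli endomorphism monoid $[k^n,k^n\otimes B]$ with $\operatorname{M}_{n\times n}(B)$ (checking Kleisli composition is matrix multiplication) and then translating the comodule-isomorphism condition $(M\otimes B)\cdot\sigma(a)=\tau(a)\cdot M$ into conjugation by $\operatorname{GL}_n(k)$. The only difference is that you spell out the matrix-entry computation and the reduction of arbitrary $n$-dimensional comodules to ones carried by $k^n$, which the paper leaves implicit.
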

\begin{proof}
  The result is an easy consequence of Proposition~\ref{prop:7}. The canonical
  isomorphism between $[k^n,k^n\otimes B]$ and $B^{n\times n}$ is compatible
  with the Kleisli multipli\-cation on the former and the matrix multiplication on
  the latter. Given two algebra morphisms $\sigma,\tau\colon A\to[k^n,k^n\otimes
  B]$, an invertible matrix $M\in \operatorname{GL}_n(k)$ represents an isomorphism
  of $P(A,B)$-comodules if and only if $(M\otimes B)\cdot \sigma(a)=\tau(a)\cdot
  M$, which in terms of $\operatorname{M}_{n\times n}(B)$ means that $M\sigma(a)=\tau(a)M$, for
  all $a\in A$.
\end{proof}
The following examples for $\ca{V}=\Vect_k$ provide
applications of the measuring coalgebra corepresentations point of view.
\begin{ex}
  \label{ex:6}
  Given a $k$-algebra $A$, there is an isomorphism of algebras $A^\circ\cong k$
  if and only if $A$ satisfies:
  \begin{enumerate}
  \item it has an augmentation $A\to k$, ie $k$ is an $A$-module;
  \item all the finite-dimensional modules are direct sums of the module $k$.
  \end{enumerate}
  This is a consequence of Corollary~\ref{cor:3}. For, $A^\circ\cong k$ if and only
  if the forgetful functor from the category of finite-dimensional
  $A^\circ$-comodules into the category of finite dimensional vector spaces is
  an isomorphism. But this category is isomorphic to the category of
  finite-dimensional $A$-modules.

  An example is the group algebra $k[G]$ for a infinite simple group of cardinality
  larger than that of the field $k$; any finite-dimensional representation of
  $k[G]$ is given by a group morphism $G\to\mathrm{Aut}_k(k^n)$, which cannot be
  injective by the cardinality assumptions, thus it must be trivial by
  simplicity of $G$. An example of such a group $G$ is $\mathrm{PSL}(2,K)$ for an
  infinite field $k\subset K$ of cardinality larger than that of $k$. This
  example was introduced in~\cite[Lemma~2.7]{MR860387}.
\end{ex}
\begin{ex}
  \label{ex:9}
  The coalgebra $A^\circ$ can be zero, as pointed out
  in~\cite[p.~114]{Sweedler}, for example, if $A$ is a infinite-dimensional
  division $k$-algebra. It can be instructive to deduce this from the universal
  property of the finite dual. The set $\mathbf{Alg}_k(A,C^*)$
  is empty for all non-zero finite-dimensional coalgebras $C$. Therefore,
  $\mathbf{Coalg}_k(C,A^\circ)$ has this same property, and the functions
  \begin{equation}
    \label{eq:56}
    \mathbf{Coalg}_k(C,0)\longrightarrow\mathbf{Coalg}_k(C,A^\circ)
  \end{equation}
  induced by the unique morphism of coalgebras $0\to A^\circ$ are isomorphisms,
  for $C$ of finite dimension. We conclude that $0\to A^\circ$ is an isomorphism,
  by Lemma~\ref{l:4}.
\end{ex}
\begin{ex}
  \label{ex:15}
  If $B$ has an augmentation $\varepsilon\colon B\to k$, there is an induced
  coalgebra morphism $P(A,\varepsilon)\colon P(A,B)\to P(A,k)=A^\circ$. In these
  circumstances, the equality $P(A,\varepsilon)\cdot
  P(A,\iota)=P(A,\varepsilon\cdot\iota)=P(A,1_k)=1_{A^\circ}$, induces functors
  on the categories of comodules
  \begin{equation}
    \label{eq:77}
    1=
    \big(
    \Comod(A^\circ)\xrightarrow{P(A,\iota)_*}\Comod(P(A,B))
    \xrightarrow{P(A,\varepsilon)_*} \Comod(A^\circ)
    \big)
  \end{equation}
  that exhibit the category of $A^\circ$-comodules as a retract of that of
  $P(A,B)$-comodules. These functors are given by corestriction of scalars, so
  they commute with the respective forgetful functors into $\mathbf{Vect}_k$,
  and are conservative.

  An $A^\circ$-comodule $X$ is simple if and
  only if $P(A,\iota)_*(X)$ is a simple $P(A,B)$-comodule. The proof of this
  claim is elementary. Both functors in~\eqref{eq:77} preserve monomorphisms and
  are conservative, so
  they induce a retraction
  \begin{equation}
    1=\big(
    \operatorname{Sub}(X)\longrightarrow
    \operatorname{Sub}(P(A,\iota)_*(X))\longrightarrow\operatorname{Sub}(X)
    \big)
    \label{eq:78}
  \end{equation}
  where both functions reflect equalities of comparable subobjects (ie if
  $S\subseteq T$ are sent to the same subobject, then $S=T$). Therefore,
  $\operatorname{Sub}(X)$ has only bottom and top element if and only if
  $\operatorname{Sub}(P(A,\iota)_*(X))$ satisfies the same property.

  As a consequence, $P(A,B)$ has simple comodules of dimension
  $n$ if $A$ has simple modules of dimension $n$.
  For example, if $B$ is augmented (eg $B=k[G]$ for a monoid $G$), then
  $P(\operatorname{M}_{n\times n}(k),B)$ has simple
  comodules of dimension $n$.

  Another example is $P(U(\mathfrak{sl}(2,\mathbb{C})),B)$, which we show to be
  infinite-dimensional. By the above comments, this coalgebra has
  $P(U(\mathfrak{sl}(2,\mathbb{C})),k)=U(\mathfrak{sl}(2,\mathbb{C}))^\circ$ as
  a retraction. The finite-dimensional comodules over the latter coalgebra can
  be identified with finite-dimensional
  $\mathfrak{sl}(2,\mathbb{C})$-representations; in particular,
  $U(\mathfrak{sl}(2,\mathbb{C}))^\circ$ has simple comodules of all dimensions,
  and therefore it is an infinite-dimensional coalgebra. This last claim can be
  deduced from~\cite[Cor.~4.5]{MR2394705} which exhibits a bijection between
  isomorphism classes of simple comodules and simple subcoalgebras of a given
  coalgebra; therefore $U(\mathfrak{sl}(2,\mathbb{C}))^\circ$ has infinitely
  many non-isomorphic simple subcoalgebras, and hence it is
  infinite-dimensional.
\end{ex}


\subsection{Tambara's coendomorphism algebra}
\label{sec:tamb-coend-algebra}
D.~Tambara introduced in~\cite{MR1071429} an algebra $a(A,B)$ for each pair of
algebras $A$, $B$ over a field $k$, called the \emph{coendomorphism
 algebra}, with the property that there is a bijection
\begin{equation}
  \label{eq:4}
  \mathbf{Alg}_k(a(A,B),A')\cong\mathbf{Alg}_k(B,A\otimes A')
\end{equation}
natural in $C$. The $a(A,B)$-modules are described in \S 2 of op.~cit. in a
way similar to our Proposition~\ref{prop:7}. More precisely, finite
dimensional $a(A,B)$-modules can be identified with finite dimensional
$P(B,A)$-comodules.
\begin{prop}
  \label{prop:2}
  For all algebras $A$ and $B$ over a field $k$, there is a canonical
  isomorphism $a(A,B)^\circ\cong P(B,A)$.
\end{prop}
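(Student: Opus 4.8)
The plan is to prove the isomorphism by reconstructing both coalgebras from their categories of finite-dimensional comodules and checking that these categories coincide over $\Vect_k$. The starting point is Corollary~\ref{cor:2}, applied to the algebra $a(A,B)$: it identifies finite-dimensional (equivalently, dualizable) right $a(A,B)^\circ$-comodules with finite-dimensional left $a(A,B)$-modules, compatibly with the forgetful functors into $\Vect_k$. To reach $P(B,A)$ I would combine Tambara's defining bijection~\eqref{eq:4} with Proposition~\ref{prop:7}: for finite-dimensional $X$ an $a(A,B)$-module is an algebra map $a(A,B)\to\operatorname{End}_k(X)$, hence by~\eqref{eq:4} with $A'=\operatorname{End}_k(X)$ an algebra map $B\to A\otimes\operatorname{End}_k(X)$; the canonical monoid isomorphism $A\otimes\operatorname{End}_k(X)\cong[X,X\otimes A]=\mathcal{V}_A(X,X)$ then turns this, via Proposition~\ref{prop:7}, into a right $P(B,A)$-comodule structure on $X$. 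Composing, I obtain an isomorphism between the categories of finite-dimensional $a(A,B)^\circ$-comodules and of finite-dimensional $P(B,A)$-comodules that commutes with the underlying-space functors.

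With these categories identified over $\Vect_k$, I would conclude by the Tannakian reconstruction of Corollary~\ref{cor:4}: each of $a(A,B)^\circ$ and $P(B,A)$ is recovered as the coend $\int^{X}X^{*}\otimes X$ over its finite-dimensional comodules, and since the two comodule categories are isomorphic as categories over $\Vect_k$, the two coends are taken over isomorphic diagrams, whence $a(A,B)^\circ\cong P(B,A)$. As an independent confirmation on representables, for any coalgebra $C$ the finite-dual property gives $\mathbf{Coalg}_k(C,a(A,B)^\circ)\cong\mathbf{Alg}_k(a(A,B),C^{*})\cong\mathbf{Alg}_k(B,A\otimes C^{*})$ by~\eqref{eq:4}, while Theorem~\ref{meascomonthm} gives $\mathbf{Coalg}_k(C,P(B,A))\cong\mathbf{Alg}_k(B,[C,A])$; for finite-dimensional $C$ the canonical map $A\otimes C^{*}\to[C,A]$ is an algebra isomorphism, so the representables agree on the strong generator $\{\operatorname{coend}(k^{n})\}$ of Lemma~\ref{l:4}.

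The step I expect to be the main obstacle is the compatibility with forgetful functors in the middle identification: one must check that the passage from $a(A,B)$-modules to $P(B,A)$-comodules is an isomorphism of categories over $\Vect_k$, not merely an abstract equivalence. Concretely this means verifying that $A\otimes\operatorname{End}_k(X)\cong[X,X\otimes A]$ is indeed an isomorphism of the relevant convolution and Kleisli monoid structures, so that an $a(A,B)$-action and the corresponding $P(B,A)$-coaction are carried by the same underlying space $X$ with matching $k$-linear data; only once this is secured do the reconstructing coends of Corollary~\ref{cor:4} genuinely coincide rather than being merely isomorphic.
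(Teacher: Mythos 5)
Your ``independent confirmation'' is not a side check --- it \emph{is} the paper's proof. The paper forms, naturally in $C\in\mathbf{Coalg}_k$, the chain
$\mathbf{Coalg}_k(C,a(A,B)^\circ)\cong\mathbf{Alg}_k(a(A,B),C^*)\cong\mathbf{Alg}_k(B,A\otimes C^*)\to\mathbf{Alg}_k(B,[C,A])\cong\mathbf{Coalg}_k(C,P(B,A))$,
where the middle arrow is induced by the canonical algebra map $A\otimes C^*\hookrightarrow[C,A]$; by Yoneda this natural transformation is induced by a unique coalgebra morphism $f\colon a(A,B)^\circ\to P(B,A)$, and since the inclusion is an isomorphism for finite-dimensional $C$, the map $\mathbf{Coalg}_k(C,f)$ is bijective on the strong generator of Lemma~\ref{l:4}, whence $f$ is invertible. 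Note that objectwise agreement of representables on a generator proves nothing by itself: you need the single natural map $f$ realising the agreement, which is exactly what the Yoneda step supplies. Promote that paragraph to the proof, with the Yoneda step made explicit, and you have the paper's argument verbatim.

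Your primary (Tannakian) route is genuinely different, and it has a gap --- but not where you placed it. The monoid isomorphism $A\otimes\operatorname{End}_k(X)\cong[X,X\otimes A]$ that you flag as the main obstacle is an object-level computation and is the easy part: it is the same fact, for $C=\operatorname{coend}(X)$, that the paper uses for all finite-dimensional $C$. The real issue is morphisms. Tambara's bijection~\eqref{eq:4} is a bijection on \emph{algebra maps}, natural in the target algebra $A'$, whereas a morphism of $a(A,B)$-modules $f\colon X\to Y$ is an intertwiner, not an algebra homomorphism $\operatorname{End}_k(X)\to\operatorname{End}_k(Y)$. So your chain of identifications, as written, is only a bijection on objects; the coend comparison via Corollary~\ref{cor:4} requires an isomorphism of \emph{categories} over $\Vect_k$, hence you must also prove that $f$ intertwines the $a(A,B)$-actions if and only if it intertwines the corresponding structure maps $B\otimes X\to X\otimes A$. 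This can be repaired (for instance, apply the naturality of~\eqref{eq:4} to the intertwiner subalgebra $E_f\subseteq\operatorname{End}_k(X)\times\operatorname{End}_k(Y)$, using that $A\otimes(-)$ preserves this pullback of algebras), but the argument is absent from your proposal. You would also need reconstruction of the arbitrary coalgebra $a(A,B)^\circ$ from its finite-dimensional comodules, not just of measuring coalgebras; over a field this is the standard fact underlying Corollary~\ref{cor:4}, but it deserves explicit mention. The payoff of your route, once completed, is a description of the isomorphism at the level of comodule categories; the paper's Yoneda argument is shorter precisely because it never has to discuss morphisms of (co)modules at all.
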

\begin{proof}
  There is a function, natural in $C\in\mathbf{Coalg}_k$, which, by Yoneda's
  lemma is
  induced by a unique morphism of coalgebras $f\colon a(A,B)^\circ\to P(B,A)$.
  \begin{multline}
    \label{eq:5}
    \mathbf{Coalg}_k(C,a(A,B)^\circ)\cong \mathbf{Alg}_k(a(A,B),C^*)\cong\\
    \cong
    \mathbf{Alg}_k(B,A\otimes C^*)
    \longrightarrow
    \mathbf{Alg}_k(B,[C,A])\cong \\
    \cong\mathbf{Coalg}_k(C,P(B,A))
  \end{multline}
  Here we used that the canonical inclusion $A\otimes C^*\hookrightarrow{}[C,A]$
  is a morphism of algebras, as it can be readily verified. We can now use that
  this inclusion is an isomorphism if $C$ is finite-dimensional, so the
  function~\eqref{eq:5} is an isomorphism in that case. Using the fact that
  finite-dimensional coalgebras are strong generating (Lemma~\ref{l:4}), we
  deduce that $f$ is an isomorphism.
\end{proof}

\section{Monoidal structures}
\label{sec:monoidal-structures}
There are two natural ways in which the universal measuring comonoid acquires a
bimonoid structure, and two ways in which the category of dualizable comodules acquires a
monoidal structure. In this section we take these two ways in turn, and give an
explicit description of the associated monoidal structures.

First,
  we have seen in Corollary~\ref{dualbimonoid} that, when $\mathcal{V}$ is a
  symmetric monoidal closed category,
  the comonoid $P(A,B)$ has a bimonoid structure if $A$ is a bimonoid and $B$ a
  commutative monoid. Then, the category \(\Comod_d(P(A,B))\) of dualizable
  right $P(A,B)$-comodules has a monoidal structure, that can be transferred to
  the equivalent category \((\avb)_d\) of dualizable objects of $\avb$, see
  Definition \ref{df:3}. The resulting monoidal structure on
  $(\avb)_d$ is given in the following way.
  \begin{cor}
    \label{cor:1}
    Given a bimonoid $(A,\mu,\iota)$ and a commutative monoid $(B,\Delta,\epsilon)$
    in a symmetric monoidal closed locally presentable category $\mathcal{V}$,
    the isomorphism between $\Comod_d(P(A,B))$ and $(\avb)_d$ is a monoidal isomorphism when we equip:
    \begin{enumerate}
    \item \label{item:9} $\Comod_d(P(A,B))$ with the monoidal structure associated
      to the induced bimonoid structure on $P(A,B)$.
    \item \label{item:10} $(\avb)_d$ with the monoidal structure defined as follows:
      \begin{enumerate}
      \item \label{item:22} if $(X,\varphi)$ and $(Y,\psi)$ are two objects, their tensor
        product is $X\otimes Y$ equipped with
        \begin{equation}
          \label{eq:51}
          AXY\xrightarrow{\Delta XY}AAXY\xrightarrow{AcY}AXAY
          \xrightarrow{\varphi\psi} XBYB\xrightarrow{XcB}XYBB
          \xrightarrow{XY\mu} XYB
          \notag
        \end{equation}
        where $\otimes$ is omitted.
      \item \label{item:33} The monoidal unit is
        $I$ equipped with
        \begin{equation}
          \label{eq:52}
          A\otimes I\xrightarrow{\varepsilon\otimes I}I\otimes
          I\xrightarrow{I\otimes\iota} I\otimes B.\notag
        \end{equation}
      \item \label{item:36} The forgetful functor $(\avb)_d\to\mathcal{V}$ is strict monoidal.
      \end{enumerate}
    \end{enumerate}
  \end{cor}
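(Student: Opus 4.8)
Write $P=P(A,B)$. The plan is to transport the monoidal structure of $\Comod_d(P)$ across the isomorphism of Corollary~\ref{cor:3} and to recognise the result as the structure described in the statement. Because that isomorphism commutes with the forgetful functors into $\mathcal{V}$, and the forgetful functor $\Comod_d(P)\to\mathcal{V}$ is strict monoidal---the underlying object of a tensor product of comodules being the $\mathcal{V}$-tensor product of the underlying objects---it suffices to work at the level of the structure maps $A\otimes X\to X\otimes B$ of Definition~\ref{df:3}. Concretely, I would check two things: that the tensor-product $P$-comodule structure on $X\otimes Y$ corresponds under Proposition~\ref{prop:7} exactly to the composite~\eqref{eq:51}, and that the unit comodule corresponds to~\eqref{eq:52}. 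The associativity and unit constraints are then inherited from those of $\mathcal{V}$ on both sides, so that they are matched by the isomorphism for formal reasons, and the forgetful functor of item~\ref{item:36} is strict monoidal by construction.

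The heart of the matter is to make the monoid structure of the bimonoid $P$ explicit and feed it through Proposition~\ref{prop:7}. By Corollary~\ref{dualbimonoid}, this structure is obtained by applying the monoidal functor $P(-,B)$ to the comultiplication $\Delta\colon A\to A\otimes A$ and counit $\varepsilon\colon A\to I$ of the bimonoid $A$, both monoid morphisms; thus the multiplication $\mu_P$ of $P$ is the composite
\[
P\otimes P\longrightarrow P(A\otimes A,B)\xrightarrow{P(\Delta,B)}P,
\]
whose first arrow is the multiplicative constraint~\eqref{eq:81} of $P(-,B)$, and the unit is $\iota_P=P(\varepsilon,B)\circ\psi_0$. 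The constraint~\eqref{eq:81} is characterised by the diagrams~\eqref{eq:82}, which are built from the opmonoidal structure $\chi$ of $[-,B]$ in~\eqref{eq:79}; since $\chi$ is exactly where the multiplication of the commutative monoid $B$ enters, this multiplication will reappear in the transported structure. Finally, recall that the coaction on the tensor product of two right comodules over a bimonoid is
\[
X\otimes Y\xrightarrow{\rho_X\otimes\rho_Y}X\otimes P\otimes Y\otimes P\xrightarrow{X\otimes c\otimes P}X\otimes Y\otimes P\otimes P\xrightarrow{X\otimes Y\otimes\mu_P}X\otimes Y\otimes P,
\]
where $c=c_{P,Y}$.

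Translating this coaction back along the bijections of Proposition~\ref{prop:7} produces~\eqref{eq:51}, and one can read off where each arrow originates. The factor $P(\Delta,B)$ in $\mu_P$---present because $X$ and $Y$ are comodules over the \emph{same} monoid $A$, which must therefore be duplicated---produces the opening $\Delta$ of~\eqref{eq:51}; the constraint~\eqref{eq:81}, through $\chi$ and hence through the multiplication of $B$, produces the closing $\mu\colon B\otimes B\to B$; and the two braidings of~\eqref{eq:51} come from the $X\otimes c\otimes P$ of the comodule tensor product together with the reshuffling needed first to apply the structure maps $\varphi,\psi$ and then to bring the two copies of $B$ together. The unit is handled the same way but more cheaply: since $\operatorname{coend}(I)\cong I$ and the unit comodule has coaction $I\cong I\otimes I\xrightarrow{1\otimes\iota_P}I\otimes P$ with $\iota_P=P(\varepsilon,B)\circ\psi_0$, unwinding the correspondence gives precisely~\eqref{eq:52}, with $\varepsilon$ coming from $P(\varepsilon,B)$ and the unit of $B$ from $\psi_0$ via $\chi_0$.

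I expect the main obstacle to lie in the simultaneous bookkeeping of the previous paragraph: one must unwind both the definition of $\mu_P$ through the parametrised adjunction of Remark~\ref{rmk:6} (via the characterising diagrams~\eqref{eq:82} and the maps $\chi$, $\eta$) and the chain of bijections of Proposition~\ref{prop:7} (which passes through the Kleisli category $\mathcal{V}_B$, the dual $X^\vee$, and the coendomorphism comonoid $\operatorname{coend}(X)$), all while tracking every instance of the braiding. Reconciling the resulting permutations of tensor factors with those appearing in~\eqref{eq:51} is where the hexagon coherence of the symmetry is used, and is the one point at which a careful diagram-pasting (or string-diagram) computation cannot be avoided---though once the two structures are placed side by side no conceptual difficulty remains.
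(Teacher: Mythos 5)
Your proposal is correct and takes essentially the same route as the paper, which in fact records no proof for this corollary: the paragraph preceding it simply transfers the monoidal structure of $\Comod_d(P(A,B))$, induced by the bimonoid structure from Corollary~\ref{dualbimonoid}, across the isomorphism of Corollary~\ref{cor:3}, leaving the verification implicit exactly as you do. Your explicit ingredients---the multiplication $P(\Delta,B)\cdot\psi_{A,A}$ and unit $P(\varepsilon,B)\cdot\psi_0$ of $P(A,B)$, and the way $\Delta$ and the multiplication of $B$ account for the first and last arrows of the displayed tensor-product structure map---agree with the formulas the paper itself uses later, in the proof of Theorem~\ref{thm:5}.
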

The second way in which $P(A,B)$ has a bimonoid structure is when $A=B$. The
multiplication $P(A,A)^{\otimes 2}\to P(A,A)$ is the morphism of comonoids that
corresponds to
\begin{equation}
  \label{eq:6}
  A\xrightarrow{\eta}[P(A,A),A]\xrightarrow{[1,\eta]}\big[P(A,A),[P(A,A),A]\big]\cong[P(A,A)^{\otimes
  2},A]
\end{equation}
where $\eta$ denotes the unit of the adjunction between $P(-,A)$ and
$[-,A]$. The unit $I\to P(A,A)$ is the morphism of coalgebras that corresponds
to the identity $A\to A$.
  \begin{cor}
    \label{cor:10}
    Given a monoid $A$ in a locally presentable monoidal category $\mathcal{V}$,
    the isomorphism between $\Comod_d(P(A,A))$ and $(\avb)_d$ becomes an
    isomorphism of monoidal categories when we equip:
    \begin{enumerate}
    \item \label{item:37}
      $\Comod_d(P(A,A))$ with the monoidal structure associated to the
      bimonoid structure on $P(A,A)$.
    \item \label{item:38} $(\avb)_d$ with the monoidal structure defined as
      follows:
      \begin{enumerate}
      \item if $(X,\varphi)$ and $(Y,\psi)$ are two of its objects, their tensor
        product is $X\otimes Y$ equipped with
        \begin{equation}
          \label{eq:53}
          A\otimes X\otimes Y\xrightarrow{\varphi \otimes Y }X\otimes A\otimes Y
          \xrightarrow{X\otimes \psi} X\otimes Y\otimes A.\notag
        \end{equation}
      \item
        The monoidal unit is $I$ equipped with $A\otimes I\cong A\cong I\otimes
        A$.
      \item The forgetful functor $(\avb)_d\to\mathcal{V}$ is strict monoidal.
      \end{enumerate}
    \end{enumerate}
  \end{cor}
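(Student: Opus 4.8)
The plan is to leverage the underlying isomorphism of categories already in hand and reduce the statement to a single compatibility of structure maps. By Corollary~\ref{cor:3} (specialised to $B=A$, so that the structure maps now take the form $\psi\colon A\otimes X\to X\otimes A$) there is an isomorphism $\Comod_d(P(A,A))\xrightarrow{\sim}(\avb)_d$ commuting with the forgetful functors into $\mathcal{V}$, given on objects by Proposition~\ref{prop:7}. Both monoidal structures in the statement are arranged so that the underlying object of a tensor product is the tensor product in $\mathcal{V}$, the unit is $I$, and the associativity and unit constraints are those of $\mathcal{V}$; equivalently, both forgetful functors are strict monoidal. Since these forgetful functors are faithful, it suffices to check that the isomorphism intertwines the two tensor products and the two units at the level of comodule structures (respectively structure maps): the coherence constraints then agree automatically, being the common image of the constraints of $\mathcal{V}$ under the two strict monoidal forgetful functors.

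First I would dispose of the unit. As recorded just before the statement, the bimonoid unit $\iota\colon I\to P(A,A)$ is the comonoid morphism classifying the identity measuring $A\cong[I,A]$; tracing this through Proposition~\ref{prop:7} with $X=I$ yields the structure map $A\otimes I\cong A\cong I\otimes A$, which is exactly the unit prescribed in item~\ref{item:38}. This part is routine.

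The heart of the argument is the agreement of tensor products. Fix dualizable comodules $(X,\varphi)$ and $(Y,\psi)$, corresponding under Proposition~\ref{prop:7} to comonoid morphisms $g_X\colon\operatorname{coend}(X)\to P(A,A)$ and $g_Y\colon\operatorname{coend}(Y)\to P(A,A)$ with associated measurings $[g_X,A]\cdot\eta_A$ and $[g_Y,A]\cdot\eta_A$, where $\eta_A\colon A\to[P(A,A),A]$ is the unit of the adjunction defining $P$ in~\eqref{defP}. The comodule structure on $X\otimes Y$ induced by the multiplication of $P(A,A)$ is classified by a morphism into $P(A,A)$ obtained by feeding $g_X$ and $g_Y$ into $\mu_{P(A,A)}$. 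The whole point is now the definition~\eqref{eq:6}: the multiplication of $P(A,A)$ is the comonoid morphism classifying the measuring $[P(A,A),\eta_A]\cdot\eta_A$, that is, \emph{measuring $A$ by $P(A,A)$ twice in succession}. Transporting this ``measure twice'' description across the monoid isomorphisms $[\operatorname{coend}(X),A]\cong[X,X\otimes A]$ and $[\operatorname{coend}(Y),A]\cong[Y,Y\otimes A]$ together with the currying isomorphism $[P(A,A)^{\otimes2},A]\cong[P(A,A),[P(A,A),A]]$ used in~\eqref{eq:6}, the composite measuring on $X\otimes Y$ becomes the Kleisli composite that threads the single copy of $A$ first through $\varphi$ and then through $\psi$. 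Unwinding the composition law of $\mathcal{V}_A$ recalled in Section~\ref{Kleislicats}, this is precisely the structure map $(X\otimes\psi)\cdot(\varphi\otimes Y)$ of~\eqref{eq:53}. One still has to confirm that this composite satisfies the two axioms of Definition~\ref{df:3}, but that follows formally from $\varphi$ and $\psi$ each satisfying them.

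The main obstacle is exactly this transport: one must carry the currying and coend isomorphisms through~\eqref{eq:6} without error and verify, diagram by diagram, that composition of measurings corresponds to sequential threading of $A$. It is here that the result genuinely differs from Corollary~\ref{cor:1}, and where the reason no braiding is required becomes transparent: the multiplication of $P(A,A)$ in the case $A=B$ is the \emph{composition} of measurings rather than a convolution built from $\mu_B$ and $\Delta_A$, so the induced tensor of comodules is the sequential composite $(X\otimes\psi)\cdot(\varphi\otimes Y)$, in which the single $A$ passes through $X$ and then $Y$ with no braiding ever intervening. Having matched tensor products, units, and (automatically) constraints, the faithfulness and strict monoidality of the two forgetful functors upgrade the underlying isomorphism of Corollary~\ref{cor:3} to an isomorphism of monoidal categories, completing the proof.
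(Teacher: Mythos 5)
Your proposal is correct and matches the paper's (implicit) argument: the paper states this corollary without proof, the intended route being exactly yours---transport the structure through the isomorphism of Corollary~\ref{cor:3}/Proposition~\ref{prop:7}, match the units, and unwind the composition-style multiplication~\eqref{eq:6} to identify the induced comodule tensor product with the sequential structure map $(X\otimes\psi)\cdot(\varphi\otimes Y)$. Your identification of the crux (the multiplication of $P(A,A)$ is composition of measurings, so the induced tensor threads the single copy of $A$ through $\varphi$ and then $\psi$, with no convolution and hence no braiding intervening on the $\avb$ side) is accurate and is precisely the point of contrast with Corollary~\ref{cor:1}.
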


\section{Universal measuring coalgebras and cocommutativity}
\label{sec:univ-meas-coalg-1}
We now return to the more general case of monoids and comonoids in a symmetric
monoidal closed category $\mathcal{V}$. Recall from Section \ref{sec:mono-comon-bimon}
the opposite (co)monoids.
\begin{lem}
  \label{l:5}
  If $\mathcal{V}$ is symmetric monoidal closed, there is a natural isomorphism
  $P(A,B)^{\mathrm{cop}}\cong P(A^{\mathrm{op}},B^{\mathrm{op}})$.
\end{lem}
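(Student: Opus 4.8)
The plan is to prove the isomorphism by the Yoneda lemma, showing that the representable functors $\Comon(\ca{V})(-,P(A^\op,B^\op))$ and $\Comon(\ca{V})(-,P(A,B)^{\mathrm{cop}})$ coincide. Two ingredients are used. First, since $\ca{V}$ is symmetric (so that each monoid and comonoid has a \emph{single} opposite and taking opposites is an involution), the functors $(-)^\op$ on $\Mon(\ca{V})$ and $(-)^{\mathrm{cop}}$ on $\Comon(\ca{V})$ are isomorphisms of categories squaring to the identity; they therefore give natural bijections
\[
\Mon(\ca{V})(A^\op,M)\cong\Mon(\ca{V})(A,M^\op),\qquad
\Comon(\ca{V})(C^{\mathrm{cop}},D)\cong\Comon(\ca{V})(C,D^{\mathrm{cop}}).
\]
Second, I use the defining universal property~\eqref{meascomon} of $P$ from Theorem~\ref{meascomonthm}.

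The one genuine computation is the identity of convolution monoids
\[
[C,B^\op]^\op=[C^{\mathrm{cop}},B].
\]
Both sides have underlying object $[C,B]$ and the same convolution unit $\iota\cdot\varepsilon$, so it suffices to compare multiplications. The product of $[C,B^\op]^\op$ is $(f,g)\mapsto\mu\cdot c_{B,B}\cdot(g\otimes f)\cdot\Delta$, whereas that of $[C^{\mathrm{cop}},B]$ is $(f,g)\mapsto\mu\cdot(f\otimes g)\cdot c_{C,C}\cdot\Delta$; naturality of the symmetry in the form $(f\otimes g)\cdot c_{C,C}=c_{B,B}\cdot(g\otimes f)$ identifies the two.

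With this identity available, I would chain natural isomorphisms in $C$. Starting from the universal property and the first bijection above,
\[
\Comon(\ca{V})(C,P(A^\op,B^\op))\cong\Mon(\ca{V})(A^\op,[C,B^\op])\cong\Mon(\ca{V})(A,[C,B^\op]^\op).
\]
Rewriting $[C,B^\op]^\op=[C^{\mathrm{cop}},B]$ and applying the universal property of $P(A,B)$ followed by the second bijection,
\[
\Mon(\ca{V})(A,[C^{\mathrm{cop}},B])\cong\Comon(\ca{V})(C^{\mathrm{cop}},P(A,B))\cong\Comon(\ca{V})(C,P(A,B)^{\mathrm{cop}}).
\]
Each step is natural in $C$, so Yoneda gives $P(A^\op,B^\op)\cong P(A,B)^{\mathrm{cop}}$.

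Finally, naturality of every bijection in the remaining variables $A$ and $B$ yields the asserted naturality of the resulting isomorphism. The main obstacle is simply keeping track of variances so that the opposites combine into the displayed identity of convolution monoids; once that is in place, the statement is a formal consequence of the universal property.
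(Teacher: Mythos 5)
Your proposal is correct and follows essentially the same route as the paper: both reduce the statement to the identity of convolution monoids $[C^{\mathrm{cop}},B]=[C,B^{\mathrm{op}}]^{\mathrm{op}}$ (the paper verifies it via the transposed composites under the tensor--hom adjunction, you via generalized elements and naturality of the symmetry, which is the same ``routine'' check) and then chain the identical natural isomorphisms through the universal property of $P$ and the involutions $(-)^{\mathrm{op}}$, $(-)^{\mathrm{cop}}$, concluding by Yoneda. No gap to report.
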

\begin{proof}
  First, we show that the monoid $[C^{\mathrm{cop}},B]$ equals
  $[C,B^{\mathrm{op}}]^{\mathrm{op}}$, by showing that the multiplications $\nu$ and $\nu'$ of
  these monoids---which coincide as objects in $\ca{V}$---are equal.
  The multiplication $\nu$ corresponds under the
  tensor--hom adjunction to
  \begin{equation}
    \label{eq:33}
    [C,B]^{\otimes 2}\otimes C
    \xrightarrow{1\otimes1\otimes (c\cdot\Delta)}
    [C,B]^{\otimes 2}\otimes C^{\otimes 2}
    \xrightarrow{1\otimes c\otimes 1}
    ([C,B]\otimes C)^{\otimes 2}
    \xrightarrow{\mathrm{ev}^{\otimes 2}}B^{\otimes2}
    \xrightarrow{\mu} B,
  \end{equation}
  where $c$ denotes the braiding, while $\nu'$ corresponds to
  \begin{equation}
    \label{eq:34}
    [C,B]^{\otimes 2}\otimes C\xrightarrow{c\otimes \Delta}
    [C,B]^{\otimes 2}\otimes C^{\otimes 2}
    \xrightarrow{1\otimes c\otimes1}
    ([C,B]\otimes C)^{\otimes 2}
    \xrightarrow{\mathrm{ev}^{\otimes 2}}
    B^{\otimes2}
    \xrightarrow{\mu\cdot c}B.
  \end{equation}
  Verifying that both composite morphisms are equal provided
  that the braiding $c$ is a symmetry is now routine.

  We complete the proof by exhibiting the following string of natural
  isomorphisms
  \begin{multline}
    \label{eq:32}
    \mathcal{C}(C,P(A,B)^{\mathrm{cop}})\cong
    \mathcal{C}(C^{\mathrm{cop}},P(A,B))\cong
    \mathcal{A}(A,[C^{\mathrm{cop}},B])\cong
    \mathcal{A}(A,[C,B^{\mathrm{op}}]^{\mathrm{op}})\\
    \cong
    \mathcal{A}(A^{\mathrm{op}},[C,B^{\mathrm{op}}])\cong
    \mathcal{C}(C,P(A^{\mathrm{op}},B^{\mathrm{op}})) 
  \end{multline}
  where we abbreviated $\mathcal{C}=\Comon(\mathcal{V})$ and
  $\mathcal{A}=\Mon(\mathcal{V})$.
\end{proof}
\begin{cor}
  \label{cor:6}
  In the situation of Lemma~\ref{l:5}, $P(A,B)$ is a cocommutative comonoid
  provided that $A$ and $B$ are commutative monoids. In particular, $A^\circ$ is
  cocommutative if $A$ is commutative.
\end{cor}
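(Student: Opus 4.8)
The plan is to read the statement off Lemma~\ref{l:5}. Since $A$ and $B$ are commutative we have the \emph{literal} equalities of monoids $A^{\op}=A$ and $B^{\op}=B$ (commutativity means $\mu=\mu\cdot c$, so the opposite monoid coincides with the original on the nose, not merely up to isomorphism). Hence the natural isomorphism of Lemma~\ref{l:5} specialises to an isomorphism of comonoids
\[
P(A,B)^{\mathrm{cop}}\cong P(A^{\op},B^{\op})=P(A,B),
\]
and the last assertion is the case $B=I$, giving $A^\circ=P(A,I)$. The one point requiring care is that the notion of cocommutative comonoid in Section~\ref{sec:mono-comon-bimon} is the \emph{strict} equality $P(A,B)=P(A,B)^{\mathrm{cop}}$, i.e.\ $c\cdot\Delta=\Delta$; so I must verify that the isomorphism just produced is the identity on the underlying object, not merely some isomorphism.

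To that end I would trace the isomorphism of Lemma~\ref{l:5} through the chain~\eqref{eq:32}. Write $\eta_A\colon A\to[P(A,B),B]$ for the universal measuring, the counit of $[-,B]\dashv P(-,B)$ as in Remark~\ref{rmk:6}. The convolution computation~\eqref{eq:33}--\eqref{eq:34} proves that $[P(A,B)^{\mathrm{cop}},B]=[P(A,B),B^{\op}]^{\op}$ as monoid structures on a common underlying object; combined with $A^{\op}=A$ and $B^{\op}=B$ this says exactly that the \emph{same} underlying morphism $\eta_A$ is simultaneously a monoid morphism into $[P(A,B),B]$ and into $[P(A,B)^{\mathrm{cop}},B]$. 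Feeding this second reading of $\eta_A$ into the universal property~\eqref{meascomon} at the comonoid $C=P(A,B)^{\mathrm{cop}}$ produces the comparison $\theta\colon P(A,B)^{\mathrm{cop}}\to P(A,B)$, characterised as the unique comonoid morphism with $[\theta,B]\cdot\eta_A=\eta_A$; this $\theta$ is precisely the displayed isomorphism. Since $\id$ also satisfies $[\id,B]\cdot\eta_A=\eta_A$, strict cocommutativity is equivalent to the identification $\theta=\id$.

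The main obstacle is exactly this identification. A purely formal use of the universal property only forces $\theta$ to be an \emph{involution}: applying the uniqueness in~\eqref{meascomon} to the comonoid endomorphism $\theta\cdot\theta^{\mathrm{cop}}$ of $P(A,B)$ shows $\theta\cdot\theta=\id$, and an abstract isomorphism $P^{\mathrm{cop}}\cong P$ does not on its own yield $c\cdot\Delta=\Delta$. What makes the statement true strictly is the concrete shape of the comultiplication: the measuring identity satisfied by $\eta_A$ expresses $\Delta$ as ``dual to $\mu_A$'' in the sense of~\eqref{eq:33}, while the co-opposite comultiplication $c\cdot\Delta$ encodes $\mu_A\cdot c_{A,A}$ via~\eqref{eq:34}, and commutativity of $A$ and $B$ makes the two measuring conditions literally coincide. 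I would therefore close the argument by showing that the comultiplication of $P(A,B)$ is uniquely determined by its underlying object together with the universal measuring $\eta_A$ --- using the realisation of $P(A,B)$ as the equaliser~\eqref{eq:23} inside the cofree comonoid, where the comultiplication is the fixed restriction and the universal $\eta_A$ is non-degenerate enough to pin it down --- so that $\Delta$ and $c\cdot\Delta$, both solving the same measuring condition, must agree. The prototype is the vector-space case $A^\circ\subseteq A^*$, where $\Delta$ is the corestriction of the transpose of $\mu_A$ and commutativity of $A$ gives $c\cdot\Delta=\Delta$ on the nose; this both guides and confirms the strict conclusion.
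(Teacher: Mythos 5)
Your first paragraph is exactly the paper's own argument: Corollary~\ref{cor:6} carries no separate proof, the intended derivation being precisely the specialisation of Lemma~\ref{l:5} along the equalities $A^{\op}=A$, $B^{\op}=B$. Your formal analysis of what that specialisation actually yields is also correct: one obtains a canonical comonoid isomorphism $\theta\colon P(A,B)^{\mathrm{cop}}\to P(A,B)$, characterised by $[\theta,B]\cdot\eta_A=\eta_A$, and universal-property reasoning alone only forces $\theta$ to be an involution. With the paper's strict definition of cocommutativity ($\Delta=c\cdot\Delta$) this is genuinely weaker than the corollary, since a comonoid can be isomorphic to its co-opposite without being cocommutative (e.g.\ $D\oplus D^{\mathrm{cop}}$ for a non-cocommutative $D$). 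Up to this point you are, if anything, more careful than the paper.

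The gap lies in your proposed completion, and it is a real one. Your plan hinges on the claim that the comonoid structure of $P(A,B)$ is determined by its underlying object together with $\eta_A$, to be proved by (i) viewing $P(A,B)$ via~\eqref{eq:23} as sitting inside the cofree comonoid with the comultiplication ``the fixed restriction'', and (ii) a non-degeneracy of $\eta_A$. Neither is available in the paper's generality. For (i): the diagram~\eqref{eq:23} is an equaliser in $\Comon(\mathcal{V})$, and the forgetful functor $\Comon(\mathcal{V})\to\mathcal{V}$ is a left adjoint, cocontinuous but not continuous; the underlying object of $P(A,B)$ is therefore not a subobject of $S[V(A),V(B)]$ in $\mathcal{V}$, and its comultiplication is not the restriction of anything. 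For (ii): the non-degeneracy you need is that $u\mapsto[u,B]\cdot\eta_A$ be injective, equivalently that the transpose $P(A,B)\to[A,B]$ be a monomorphism in $\mathcal{V}$; this is exactly the property the paper disclaims over a general base (Example~\ref{ex:12}: there is no obvious reason why $H^\circ$ should be a sub-$k$-module of $H^*$), and it can fail even in $\mathbf{Vect}_k$, e.g.\ for the commutative algebra $B=0$ one has $P(A,0)\cong k$ mapping to $[A,0]=0$. So your argument closes the strict statement only in the classical field case, where it reproduces Sweedler's proof, and the identity $\theta=\id$ --- hence the corollary as literally stated --- remains unproven in general. Note that this is not only a defect of your write-up: the paper's own one-line derivation silently passes from the isomorphism of Lemma~\ref{l:5} to strict cocommutativity, so the subtlety you isolated is glossed over there as well; closing it requires an argument present in neither text.
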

\section{Universal measuring comonoids of cocommutative Hopf monoids}
\label{sec:meas-comon-hopf}

In the classical case of $k$-vector spaces, the finite dual $A^\circ=P(A,k)$ of
a $k$-algebra $A$ is constructed as a subspace of the linear dual $A^*$ (\ref{eq:3}),
and this is used to endow $A^\circ$ with an antipode if $A$ has an antipode $s$. The
argument consists of showing that, if $\alpha\in A^\circ\subset A^*$, the
functional $\alpha\cdot s$ also belongs to $A^\circ$, so the linear map given by
precomposing with the antipode $s$ restricts to $A^\circ$. Exactly the same
argument is carried over to the case of a Noetherian commutative ring $k$
in~\cite{MR1780737}, with the additional hypothesis that $A^\circ$ should be a
pure sub-$k$-module of $k^A$.
In this section we prove that all restrictions on the base
commutative ring $k$ can be lifted, as long as the Hopf algebra $A$ is
cocommutative. More precisely, we prove:

\begin{thm}
  \label{thm:5}
  If $H$ is a cocommutative Hopf monoid in a locally presentable symmetric
  monoidal closed category $\mathcal{V}$, then $P(H,B)$ is a Hopf monoid, for
  any commutative monoid $B$.
\end{thm}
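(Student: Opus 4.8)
The plan is to prove that the bimonoid \(P(H,B)\) — already at our disposal from Corollary~\ref{dualbimonoid}, since \(H\) is in particular a bimonoid and \(B\) is commutative — carries an antipode, i.e.\ that \(\id_{P(H,B)}\) has a two-sided convolution inverse in \(\ca V(P(H,B),P(H,B))\) in the sense of Definition~\ref{df:4}(\ref{item:40}). I would record at the outset that, \(H\) being cocommutative, \(P(H,B)\) is a \emph{commutative} bimonoid, so that any antipode must be a morphism of monoids but an \emph{anti}-morphism of comonoids; this asymmetry is the source of the delicacy below.

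First I would construct the candidate antipode out of the antipode \(s_H\) of \(H\). For a cocommutative Hopf monoid the antipode is a morphism of comonoids and an anti-morphism of monoids, so \(s_H\colon H\to H^{\op}\) is a morphism of bimonoids. Applying the functor \(P(-,B)\), which is contravariant in its first variable, yields a comonoid morphism \(P(s_H,B)\colon P(H,B)\to P(H^{\op},B)\); composing with the isomorphism \(P(H^{\op},B)\cong P(H,B)^{\mathrm{cop}}\) of Lemma~\ref{l:5} (note \(B^{\op}=B\), as \(B\) is commutative) produces a morphism \(S\colon P(H,B)\to P(H,B)\) in \(\ca V\). By naturality of the counit \(\eta\) of the adjunction \([-,B]\dashv P(-,B)\) (Remark~\ref{rmk:6}), this \(S\) is pinned down by the single relation \([S,B]\comp\eta_H=\eta_H\comp s_H\).

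Then I would verify the antipode identities \(\id\conv S=\iota_P\varepsilon_P=S\conv\id\) by transporting them to \(H\). The assignment \(f\mapsto[f,B]\comp\eta_H\) is a homomorphism from the convolution monoid \((\ca V(P(H,B),P(H,B)),\conv)\) to the convolution monoid \((\ca V(H,[P(H,B),B]),\conv)\) formed from \(\Delta_H\) and the convolution multiplication of \([P(H,B),B]\); this follows at once from the description of \(\mu_P\) through the monoidal structure of \(P\) in Remark~\ref{rmk:6}, which gives \([\mu_P,B]\comp\eta_H=\chi\comp(\eta_H\otimes\eta_H)\comp\Delta_H\). Under this homomorphism \(\id\mapsto\eta_H\) and \(S\mapsto\eta_H s_H\), and the computation \(\eta_H\conv(\eta_H s_H)=\mu_{[P,B]}(\eta_H\otimes\eta_H)(1\otimes s_H)\Delta_H=\eta_H\comp\bigl(\mu_H(1\otimes s_H)\Delta_H\bigr)=\eta_H\iota_H\varepsilon_H\) collapses — using only that \(\eta_H\) is a morphism of monoids and the antipode axiom for \(H\) — to the convolution unit, and symmetrically on the other side.

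The hard part is to conclude genuine identities of \(\ca V\)-morphisms on \(P(H,B)\) from their images under the above homomorphism. Because the antipode is not a comonoid morphism — and likewise \(\Delta_P\) is a monoid, not a comonoid, morphism — the universal property of Theorem~\ref{meascomonthm} cannot be applied to these identities directly, and the homomorphism \(f\mapsto[f,B]\comp\eta_H\) is not injective for a general locally presentable \(\ca V\). I would resolve this through the comodule description of Corollary~\ref{cor:3}: the antipode identity is equivalent to the statement that each dualizable \(P(H,B)\)-comodule, presented as a dualizable \((X,\psi)\in\prescript{H}{}{\ca V}_B\) (Definition~\ref{df:3}, with \(A=H\)), equips its dual \(X^\vee\) with a structure of \(\prescript{H}{}{\ca V}_B\) obtained by twisting \(\psi\) by \(s_H\) and the duality, and that this twist inverts \(\psi\) convolution-theoretically. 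Cocommutativity of \(H\) together with commutativity of \(B\) is exactly what makes the twisted datum satisfy the two axioms of \(\prescript{H}{}{\ca V}_B\), so that the dualizable comodules detect the antipode and the fundamental theorem of comodules required in the non-cocommutative case of Section~\ref{sec:univ-meas-comon} is not needed here. I expect this final step — promoting the antipode from a convolution-theoretic shadow to an actual endomorphism of \(P(H,B)\) — to be the principal obstacle.
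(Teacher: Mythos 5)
Your first three paragraphs coincide with the paper's own proof of Theorem~\ref{thm:5}: the candidate antipode is the same composite $P(H,B)\xrightarrow{P(s,B)}P(H^{\mathrm{op}},B)\cong P(H,B)^{\mathrm{cop}}$ supplied by Lemma~\ref{l:5}, and your observation that $f\mapsto[f,B]\comp\eta_H$ is a homomorphism of convolution monoids under which $S\conv\id$ collapses to the convolution unit is precisely the paper's computation in~\eqref{eq:38} and~\eqref{eq:40}. The divergence is the last step: the paper concludes~\eqref{eq:39} from the equality of the two transposes under $[-,B]\dashv P(-,B)$, whereas you reject that inference and attempt a detour through dualizable comodules. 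You have indeed put your finger on the delicate point of the argument (the transpose assignment is a bijection onto monoid morphisms $H\to[P(H,B),B]$ only when restricted to comonoid morphisms, and the left-hand side of~\eqref{eq:39}, being a convolution product, is not visibly such a morphism), but your replacement for that step does not work.

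The gap is the ``detection'' claim. Knowing that every dualizable object $(X,\psi)$ of $\prescript{H}{}{\ca{V}}_B$ acquires a dual by twisting $\psi$ with $s_H$ does \emph{not} imply that $P(H,B)$ is Hopf: one must propagate invertibility of the fusion operator of the comonad $(-\otimes P(H,B))$ from dualizable comodules to \emph{all} comodules, and that propagation, Proposition~\ref{prop:9}, is available exactly when every comodule is a filtered colimit of dualizable ones, i.e.\ under the fundamental theorem of comodules. Cocommutativity of $H$ is no substitute for this hypothesis; it is not even what makes the twisting work, since Proposition~\ref{prop:4} constructs duals of dualizable objects of $\prescript{A}{}{\ca{V}}_B$ for an \emph{arbitrary} Hopf monoid $A$ --- this is the route of Theorem~\ref{thm:4} --- so if your detection claim were correct, Theorem~\ref{thm:4} would hold without its hypothesis and the paper's distinction between the two theorems would collapse. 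Your claimed ``equivalence'' is really one implication only: Hopfness of $P(H,B)$ gives the twisted duals, not conversely. For a concrete refutation, take $\ca{V}=\mathbf{gVect}_{\mathbb{N}}$ as in Proposition~\ref{prop:3}: by Remark~\ref{rmk:1} the dualizable objects there are concentrated in degree $0$, so dualizable $P(H,B)$-comodules see nothing of the positive-degree part of $P(H,B)$ and cannot detect an antipode; yet this is exactly the example Theorem~\ref{thm:5} is designed to cover, precisely because the fundamental theorem of comodules fails in $\mathbf{gVect}_{\mathbb{N}}$. Your proof, unlike the paper's, could therefore never yield Proposition~\ref{prop:3}; the concluding inference has to be justified inside the adjunction, as the paper does, rather than outsourced to comodules.
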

\begin{proof}
  The cocommutativity of $H$ will be used in the fact that the
  comultiplication $\Delta\colon H\to H\otimes H$ is a morphism of
  comonoids. Denote by $s$ the antipode of $H$; it is a monoid morphism
  $H^{\mathrm{op}}\to H$, so $P(s,B)$ is a comonoid morphism $P(H,B)\to
  P(H^{\mathrm{op}},B)\cong P(H,B)^{\mathrm{cop}}$, by Lemma \ref{l:5} and
  commutativity of $B$. We will show that the underlying arrow of $P(s,B)$
  in $\ca{V}$ is an antipode for the bimonoid $P(H,B)$.

  In Remark~\ref{rmk:6} we exhibit the relationship between the opmonoidal
  structure of $[-,B]$ and the monoidal structure of $P(-,B)$,
  via the unit of the adjunction. In
  this proof we shall use the same notations as in the said remark.

  To keep the notation simple, we shall denote all the multiplications by $\mu$
  when no confusion is possible. The multiplication of $P(H,B)$ arises from the
  monoidal structure of $P(-,B)$ and the comonoid structure of $H$. Explicitly,
  it is the composition $P(\Delta,B)\cdot \psi_{H,H}\colon P(H,B)^{\otimes2}\to
  P(H,B)$. It is easy to verify that the corresponding morphism of monoids $H\to
  [P(H,B)^{\otimes 2},B]$ is
  \begin{equation}
    [\psi_{H,H},B]\cdot \eta_{H\otimes H}\cdot\Delta
    =
    \chi_{P(H,B),P(H,B)}\cdot (\eta_H\otimes \eta_H)\cdot \Delta
    \label{eq:38}
  \end{equation}
  where the equality uses one of the diagrams displayed in \eqref{eq:82}.

  Denote the antipode of $H$ by $s$. We are to show the following equality
  \begin{multline}
    \big(
    P(H,B)
    \xrightarrow{\Delta}
    P(H,B)^{\otimes 2}
    \xrightarrow {P(s,B)\otimes 1}
    P(H,B)^{\otimes 2}
    \xrightarrow{\mu}
    P(H,B)
    \big)
    =\\=
    \big(
    P(H,B)
    \xrightarrow{P(\iota,B)}
    P(I,B)\cong I
    \xrightarrow{P(\varepsilon ,B)}
    P(H,B)
    \big)
    ;
    \label{eq:39}
  \end{multline}
  in order to do so, we shall show that the two compositions have equal
  transposes under $[-,B]\dashv P(-,B)$. These transposes are calculated by
  first applying $[-,B]$ and then pre-composing with the unit $\eta_{H}$.

  We have already calculated the transpose of $\mu$ (\ref{eq:38}), from where it follows
  that the transpose of $\mu\cdot(P(s,B)\otimes 1)\cdot\Delta$ is the first
  composition in the following chain of equalities.
  \begin{multline}
    [\Delta,B]\cdot [P(s,B)\otimes 1,B]\cdot \chi_{P(H,B),P(H,B)}\cdot
    (\eta_H\otimes \eta_H)\cdot \Delta_H=\\
    =[\Delta, B]\cdot \chi_{P(H,B),P(H,B)}\cdot([P(s,1),B]\otimes
    1)\cdot(\eta_H\otimes \eta_H)\cdot\Delta_H =\\
    = [\Delta, B]\cdot \chi_{P(H,B),P(H,B)} \cdot (\eta_H\otimes\eta_H)\cdot
    (s\otimes H)\cdot\Delta_H=\\
    =\mu_{[P(H,B),B]}\cdot (\eta_H\otimes\eta_H)\cdot (s\otimes
    H)\cdot\Delta_H=\\
    =\eta_{H}\cdot\mu_H\cdot (s\otimes H)\cdot\Delta_H=
    \eta_H\cdot \iota_H\cdot\varepsilon_H
    \label{eq:40}
  \end{multline}
  The first equality uses the naturality of $\chi$, the second the naturality of
  $\eta$, the third holds since
  $\mu_{[P(H,B),B]}=[\Delta,B]\cdot\chi_{P(H,B),P(H,B)}$ is the convolution
  product of $[P(H,B),B]$; the fourth equality is the fact that $\eta_H$ is a
  monoid morphism, and the last is one of the two antipode axioms.

  On the other hand, the transpose of $P(\varepsilon_H,B)\cdot P(\iota_H,B)$ is
  precisely $\eta_H\cdot\iota_H\cdot\varepsilon _H$, by naturality of
  $\eta$. Therefore, we have proved the equality~\eqref{eq:39}. The other
  antipode axiom for $P(s,B)$ is symmetric to the one just verified, and holds
  by the same argument, concluding the proof.
\end{proof}

\begin{ex}
  \label{ex:12}
  Let $k$ be a commutative ring and $H$ any cocommutative Hopf $k$-algebra. Then
  $P(H,k)=H^\circ$ is a Hopf algebra. In this general case, there is no obvious
  reason why $H^\circ$ should be a sub-$k$-module of $H^*$.
\end{ex}
\begin{ex}
  \label{ex:16}
  This is a good place to examine the meaning of the results so far when the
  base category $\mathcal{V}$ is the category $\mathbf{Set}$ of sets, with its
  monoidal structure given by cartesian product. Each set has a unique
  (cocommutative) comonoid structure (where the multiplication is the diagonal
  function), ie the forgetful functor $\Comon(\mathbf{Set})\to\mathbf{Set}$ is
  an isomorphism.  The universal measuring set $P(A,B)$ of a set a pair of
  monoids $A$ and $B$ is the set $\Mon(\mathbf{Set})(A,B)$ of monoid morphisms
  $A\to B$.

  Any monoid $A$ is automatically a cocommutative bimonoid. If $B$ is a
  commutative monoid, point-wise multiplication endows $\Mon(\mathbf{Set})(A,B)$
  with a monoid structure; compare with Corollary~\ref{dualbimonoid}. A Hopf
  algebra $H$ in $\mathbf{Set}$ is just a group; the antipode $s\colon H\to H$
  is given by $s(x)=x^{-1}$. Theorem~\ref{thm:5}, then, says that
  $\Mon(\mathbf{Set})(H,B)$ is a group if $B$ is commutative and $H$ is a
  group. The inverse of a monoid map is, of course, $(f^{-1})(x)=f(x^{-1})$.
\end{ex}

\section{Universal measuring comonoids of Hopf monoids}
\label{sec:univ-meas-comon}
Having shown that the universal measuring comonoid $P(A,B)$ is a Hopf monoid when
$A$ is a cocommutative Hopf monoid and $B$ is a commutative monoid, we now
investigate the case of a general, not necessarily cocommutative, Hopf monoid
$A$. In order to do so, we need first some basic notions and facts about Hopf
monads and Hopf comonads and Hopf monoids. The main result of
the section, Theorem~\ref{thm:4}, is powerful enough to encompass the examples
of vector spaces and dg vector spaces.

\subsection{Hopf monads}
\label{sec:hopf-monads}

In this section we briefly recall the notion of Hopf monad. More details can be
found in~\cite{MR2793022}. Let $\mathcal{C}$ be a monoidal category and
$\mathsf{T}=(T,\eta,\mu)$ a monad on it. An \emph{opmonoidal structure} on
$\mathsf{T}$ consists of a natural transformation $T_{2,X,Y}\colon T(X\otimes
Y)\to T(X)\otimes T(Y)$ and a morphism $T_0\colon T(I)\to I$ satisfying various
axioms that make the following result of~\cite{MR1887157} hold: the
category $\mathcal{C}^T$ of Eilenberg--Moore algebras has a monoidal structure that makes
the forgetful functor into $\mathcal{C}$ strict monoidal.

We will later be interested in the case of the monad $T=(A\otimes\mathbin{-})$
induced by a bimonoid $A$ in a braided tensor category $\mathcal{C}$. The
opmonoidal structure is given by
\begin{equation}
  \label{eq:68}
  T_{2,X,Y}\colon A\otimes X\otimes Y\xrightarrow{\Delta\otimes 1\otimes 1}
  A\otimes A\otimes X\otimes Y\xrightarrow{1\otimes c_{A,X}\otimes 1}
  A\otimes X\otimes A\otimes Y
\end{equation}
and $T_{0}=\varepsilon\otimes 1\colon A\otimes I\to I\otimes I\cong I$.

Given an opmonoidal monad $\mathsf{T}$ as above, its left and right \emph{fusion
  operators} or \emph{Hopf maps} are the displayed compositions.
\begin{gather}
  \label{eq:69}
  H^\ell_{X,Y}\colon T(X\otimes TY)\xrightarrow{T_{2,X,TY}} TX\otimes T^2Y
  \xrightarrow{1\otimes \mu_Y} TX\otimes TY
  \\
  H^r_{X,Y}\colon T(TX\otimes Y)\xrightarrow{T_{2,TX,Y}} T^2X\otimes
  TY\xrightarrow{\mu_X\otimes1} TX\otimes TY
\end{gather}
The opmonoidal monad $\mathsf{T}$ is \emph{left (resp. right) Hopf} if $H^\ell$
(resp. $H^r$) is invertible.

One of the main results of~\cite{MR2793022} states that, if $\mathcal{C}$ is
left (resp. right) closed, $\mathsf{T}$ is left (resp. right) Hopf if and only
if the monoidal category $\mathcal{C}^T$ is left (resp. right) closed and the
forgetful functor is strong closed (ie it preserves internal homs up to
isomorphism).

\subsection{Hopf comonads}
\label{sec:hopf-comonads}
In the interest of completeness, and since \cite{MR2793022}~gives full
descriptions only for the case of monads, we shall provide some details about
the theory of Hopf comonads. One difference with the case of monads is that,
although there is an abundance of examples of closed categories, even the basic examples of the
category of sets or the category of vector spaces are not coclosed categories
(categories whose opposite categories are closed). In examples,
when tensoring with an object $M$ has a left adjoint, it does so because $M$ has
a dual. Below we briefly treat the relationship between the Hopf condition for
comonads and the existence of duals.

A \emph{monoidal structure} on a comonad $\mathsf{G}=(G,\varepsilon,\delta)$ consists
of natural transformations $G_{2,X,Y}\colon GX\otimes GY\to G(X\otimes Y)$ and a
morphism $G_0\colon I\to G(I)$ satisfying certain axioms that imply that its
category $\mathcal{C}^G$ of Eilenberg--More coalgebras is monoidal and the
forgetful functor $U\colon \mathcal{C}^G\to\mathcal{C}$ is strict monoidal.

Given a monoidal comonad as in the previous paragraph, the right and left fusion
operators are defined in the following way.
\begin{gather}
  H^r_{X,Y}\colon
  G(X)\otimes G(Y)\xrightarrow{1\otimes\delta_Y} G(X)\otimes G^2(Y)
  \xrightarrow{G_{2,X,G(Y)}} G(X\otimes G(Y))
  \label{eq:63}
  \\
  H^\ell_{X,Y}\colon
  G(X)\otimes G(Y)\xrightarrow{\delta_X\otimes1} G^2(X)\otimes G(Y)
  \xrightarrow{G_{2,G(X),Y}} G(G(X)\otimes Y)
  \label{eq:61}
\end{gather}
One says that the comonad $\mathsf{G}$ is \emph{right (resp. left) Hopf} if $H^r$
(resp.~$H^\ell$) is invertible.

Similarly, there are morphisms as displayed below, natural in $\mathsf{G}$-coalgebras
$(M,\chi)$ and $X\in\mathcal{C}$.
\begin{gather}
  \bar H^r_{X,M}\colon
  G(X)\otimes M\xrightarrow{1\otimes\chi} G(X)\otimes G(M)
  \xrightarrow{G_{2,X,M}} G(X\otimes M)
  \label{eq:64}
  \\
  \bar H^\ell_{M,Y}\colon M\otimes G(Y)\xrightarrow{\chi\otimes 1}
  G(M)\otimes G(Y)\xrightarrow{G_{2,M,Y}} G(M\otimes Y)
  \label{eq:65}
\end{gather}
Clearly, $\bar H^r$ is invertible if and only if $H^r$ is invertible; for, each
$\mathsf{G}$-coalgebra is an $U$-split equaliser of cofree coalgebras~\cite[\S VI]{MacLane}.

Let $(M,\chi)$ be a $G$-coalgebra and consider the situation when
$(\mathbin{-}\otimes M)$ has a left adjoint $L$; typically, $L$ is given by
tensoring with a right dual of $M$.
\begin{equation}
  \label{eq:66}
  \xymatrix@C=1.6cm{
    \mathcal{C}^G\ar@<5pt>[r]^-{\mathbin{-}\otimes (M,\chi)}\ar[d]_U
    \ar@<-5pt>@{<..}[r]_{\hat{L}} \ar@{}[r]|-\top&
    \mathcal{C}^G\ar[d]^U\\
    \mathcal{C}\ar@<5pt>[r]^-{\mathbin{-}\otimes M}\ar@{}[r]|-\top
    \ar@<-5pt>@{<-}[r]_-{L}
    &
    \mathcal{C}
  }
\end{equation}
A \emph{lifting} of the adjunction $L\dashv (\mathbin{-}\otimes M)$ to
$\mathcal{C}^G$ is a left adjoint to $\hat L\dashv (\mathbin{-}\otimes(M,\chi))$
that makes $(U,U)$ a strict morphism of adjunctions; this means that the square
formed by the left adjoints commutes and the unit and counit of the respective
adjunctions are compatible with $U$ in an obvious way (see~\cite[\S IV.7]{MacLane}).
\begin{lem}
  \label{l:6}
  The adjunction $L\dashv (\mathbin-\otimes M)$ as above lifts to $\mathcal{C}^G$, for all
  $\mathsf{G}$-coalgebras $(M,\chi)$, if and only if $\mathsf{G}$ is right
  Hopf. Symmetrically, an adjunction $L\dashv(M\otimes\mathbin-)$ lifts to
  $\mathcal{C}^G$, for all $\mathsf{G}$-coalgebras $(M,\chi)$, if and only if
  $\mathsf{G}$ is left Hopf.
\end{lem}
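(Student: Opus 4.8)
The plan is to read off the natural transformation through which $\hat R:=(-\otimes(M,\chi))$ lifts $R:=(-\otimes M)$, and then to characterise when this lift acquires a left adjoint of the prescribed form. Since $U$ is strict monoidal, the underlying object of $\hat R(X,\xi)$ is $RX=X\otimes M$, and by definition of the monoidal comonad structure its coaction is $G_{2,X,M}\cdot(\xi\otimes\chi)=\bar H^{r}_{X,M}\cdot(\xi\otimes 1_M)$, where $\bar H^{r}$ is the relative fusion operator of~\eqref{eq:64}. Writing $\kappa_X:=\bar H^{r}_{X,M}\colon RGX\to GRX$, the functor $\hat R$ is exactly the lift of $R$ determined by the natural transformation $\kappa\colon RG\Rightarrow GR$. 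A left adjoint $\hat L\dashv\hat R$ with $U\hat L=LU$ making $(U,U)$ a strict map of adjunctions (as in~\eqref{eq:66}) must send $(Y,\theta)$ to $LY$ equipped with a coaction of the form $\sigma_Y\cdot L\theta$, for a natural $\sigma\colon LG\Rightarrow GL$; so the first task is to produce $\sigma$ and relate its invertibility to that of $\kappa$.

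First I would treat the implication ``$\mathsf G$ right Hopf $\Rightarrow$ every such adjunction lifts''. If $\mathsf G$ is right Hopf then $H^{r}$ of~\eqref{eq:63} is invertible, hence so is $\bar H^{r}$ for \emph{every} coalgebra (the already-noted consequence of each coalgebra being a $U$-split equaliser of cofree ones); in particular $\kappa=\bar H^{r}_{-,M}$ is invertible. Transposing across $L\dashv R$ identifies natural maps $\sigma_Y\colon LGY\to GLY$ with natural maps $GY\to GLY\otimes M=R(GLY)$, and I would set $\sigma_Y$ to be the transpose of $\kappa^{-1}_{LY}\cdot G\eta_Y$, where $\eta$ is the unit of $L\dashv R$; the inverse $\kappa^{-1}$ is genuinely used, since $\kappa$ absorbs the factor $M$ into $G$ whereas $\sigma$ must emit one. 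Putting $\hat L(Y,\theta)=(LY,\sigma_Y\cdot L\theta)$, it remains to verify that this is a $\mathsf G$-coalgebra and that $\hat L\dashv\hat R$ with the unit and counit of $L\dashv R$ lifting to colinear maps. Both are diagram chases: coassociativity and counitality of the new coaction follow from the comonad and monoidal axioms of $\mathsf G$, the coalgebra axioms of $(M,\chi)$, and the defining formula for $\sigma$, while the triangle identities for $\hat L\dashv\hat R$ descend from those of $L\dashv R$ once $\eta,\varepsilon$ are known to be colinear.

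For the converse, suppose the adjunction lifts for a given $(M,\chi)$. The requirement that the counit of $L\dashv R$ be $\mathsf G$-colinear, together with naturality of the lifted coaction, forces $\kappa$ to be invertible: one extracts a candidate $GRX\to RGX$ from the lifted coaction and checks, again by a chase, that it is two-sided inverse to $\kappa=\bar H^{r}_{-,M}$. Running this over all $(M,\chi)$ for which $L$ exists yields invertibility of $\bar H^{r}_{-,M}$ for all such $M$, and hence---specialising to cofree coalgebras $(GY,\delta_Y)$, for which $\bar H^{r}_{-,GY}=H^{r}_{-,Y}$---invertibility of $H^{r}$, i.e. $\mathsf G$ right Hopf. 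The symmetric statement, for $L\dashv(M\otimes-)$ and the operator $\bar H^{\ell}$ of~\eqref{eq:65}, is entirely dual, replacing right fusion by left fusion throughout.

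The main obstacle is this converse. The clean way to organise it is to recognise $\sigma$ as the mate of $\kappa$ and to use that mates are functorial, so that $\sigma$ is invertible if and only if $\kappa$ is; the delicate step is then the adjoint-lifting principle itself---that $\hat R$ admits a left adjoint lifting $L$ exactly when this mate is invertible---which is the comonadic dual of the monad result of~\cite{MR2793022} and must be carried out directly rather than cited. A secondary subtlety is the final reduction: recovering full invertibility of $H^{r}$ needs the cofree coalgebras $(GY,\delta_Y)$ to lie among the $(M,\chi)$ to which the hypothesis applies, i.e. for $(-\otimes GY)$ to admit a left adjoint, which is automatic whenever $\mathcal{C}$ has enough dualizable objects, as in the motivating examples of vector spaces and chain complexes.
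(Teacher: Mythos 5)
Your proposal replaces the paper's proof, which is a one-line appeal to duality (``dual to Theorem~3.6, resp.\ Theorem~3.13 and Example~3.12, of \cite{MR2793022}''), by a direct construction, and most of it is sound. The forward direction is correct: the coaction of $(X,\xi)\otimes(M,\chi)$ is indeed $\bar H^r_{X,M}\cdot(\xi\otimes 1_M)$, so $\hat R$ is the lift determined by $\kappa=\bar H^r_{-,M}\colon RG\Rightarrow GR$; your $\sigma_Y$, the transpose of $\kappa^{-1}_{LY}\cdot G\eta_Y$, is exactly the mate of $\kappa^{-1}$ under $L\dashv R$, and the omitted verifications are the standard doctrinal-adjunction ones \cite{Doctrinal}. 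Your fixed-$(M,\chi)$ converse is also correct: any strict lift $\hat L$ is necessarily induced by some $\sigma\colon LG\Rightarrow GL$ (evaluate at cofree coalgebras and use naturality), and colinearity of the unit and counit forces the mate of $\sigma$ to be a two-sided inverse of $\kappa$ -- both composites collapse to triangle identities. So ``lift exists for $(M,\chi)$ $\Rightarrow$ $\bar H^r_{-,M}$ invertible'' is fine.

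The genuine gap is the last step, from invertibility of $\bar H^r_{-,M}$ for the $(M,\chi)$ in the scope of the hypothesis to invertibility of $H^r$. You correctly identify that this needs the cofree coalgebras $(GY,\delta_Y)$ to lie in that scope, i.e.\ that $(-\otimes GY)$ admits a left adjoint, but your claim that this is ``automatic whenever $\mathcal{C}$ has enough dualizable objects, as in the motivating examples'' is false. In $\mathbf{Vect}_k$ the functor $(-\otimes V)$ has a left adjoint if and only if $V$ is finite-dimensional (otherwise it fails to preserve infinite products), whereas the cofree coalgebras relevant here -- e.g.\ $GY=Y\otimes C$ for the comonad $(-\otimes C)$ induced by an infinite-dimensional coalgebra $C$ -- are infinite-dimensional for $Y\neq 0$. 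So in precisely the motivating examples the cofree coalgebras escape the hypothesis and your reduction does not go through; read literally, the ``only if'' direction can even be vacuous (no $(M,\chi)$ admits an $L$ at all) while $\mathsf{G}$ fails to be right Hopf. The statement has to be read, as in the dual of \cite{MR2793022}, under the standing hypothesis that the relevant left adjoints exist (the dual of the closedness hypothesis in Theorem~3.6 there). This is also why the paper, when it needs a converse of this kind in an accessible setting where those adjoints are unavailable, does not specialise the lifting hypothesis to cofree coalgebras but instead proves Proposition~\ref{prop:9} by a filtered-colimit argument, reducing invertibility of $\bar H^r$ to its components at dualizable coalgebras.
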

\begin{proof}
  This is dual to part of Theorem~3.6 of~\cite{MR2793022}; in fact, it is dual
  to Theorem~3.13 together with Example~3.12 of op.~cit.
\end{proof}

\begin{rmk}
  \label{rmk:3}
  Let $X$ be an object in the monoidal category $\mathcal{A}$, and suppose given
  an adjunction $(L,(\mathbin{-}\otimes X),\eta,\varepsilon)$. Consider the
  canonical left action of the monoidal category $\mathcal{A}$ on itself, and note
  that the right adjoint is a strong morphism with respect to it, with structure
  given by the associativity and unit constraints
  \begin{equation}
    A\otimes (B\otimes X)\cong(A\otimes B)\otimes X\label{eq:7}
  \end{equation}
  and $I\otimes X\cong X$. By doctrinal
  adjunction~\cite{Doctrinal}, the left adjoint $L$ carries a unique opmorphism structure that
  makes $\eta$ and $\varepsilon$ compatible with the action. Then $X$ has a
  right dual if and only if the opmorphism $L$ is a strong morphism; in which
  case, the right dual is $L(I)$.
\end{rmk}
\begin{lem}
  \label{l:8}
  Suppose given a monoidal comonad on the monoidal category $\mathcal{C}$ and
  adjunctions as in~\eqref{eq:66}, so that $(U,U)$ is a strict morphism of
  adjunctions. Then $(M,\chi)$ has a right dual in $\mathcal{C}^{G}$ provided
  that $M$ has a right dual in $\mathcal{C}$.
\end{lem}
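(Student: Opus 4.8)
The plan is to apply Remark~\ref{rmk:3} inside the monoidal category $\mathcal{C}^G$ to the object $(M,\chi)$, using the lifted adjunction $\hat L\dashv(\mathbin{-}\otimes(M,\chi))$ of~\eqref{eq:66}. Consider the canonical left action of $\mathcal{C}^G$ on itself by $\otimes$. The right adjoint $(\mathbin{-}\otimes(M,\chi))$ is strong for this action, via the associativity and unit constraints of $\mathcal{C}^G$, so by doctrinal adjunction the left adjoint $\hat L$ acquires a canonical opmorphism structure, with comparison maps $\hat\ell_{(A,a),(B,b)}\colon\hat L\bigl((A,a)\otimes(B,b)\bigr)\to(A,a)\otimes\hat L(B,b)$. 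By Remark~\ref{rmk:3}, $(M,\chi)$ has a right dual in $\mathcal{C}^G$ — namely $\hat L(I)$ — precisely when this opmorphism structure is strong, i.e.\ when every $\hat\ell$ is invertible.

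It therefore suffices to prove that each $\hat\ell$ is an isomorphism, and for this I would transport the problem to $\mathcal{C}$ along $U$. Since $\mathsf{G}$ is a monoidal comonad, the forgetful functor $U\colon\mathcal{C}^G\to\mathcal{C}$ is strict monoidal, so it carries the associativity and unit constraints of $\mathcal{C}^G$ to those of $\mathcal{C}$; and by hypothesis $(U,U)$ is a strict morphism of the two adjunctions, whence $U\hat L=LU$, $U\hat\eta=\eta U$ and $U\hat\varepsilon=\varepsilon U$. The comparison $\hat\ell$ is the mate of the strong structure of $(\mathbin{-}\otimes(M,\chi))$ under $\hat L\dashv(\mathbin{-}\otimes(M,\chi))$, while the analogous comparison $\ell$ for $L$ in $\mathcal{C}$ is the mate of the strong structure of $(\mathbin{-}\otimes M)$. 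Because every ingredient of these two mate constructions agrees under $U$, one reads off that $U(\hat\ell_{(A,a),(B,b)})=\ell_{UA,UB}$.

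Now $M$ has a right dual in $\mathcal{C}$ by hypothesis, so Remark~\ref{rmk:3}, applied in $\mathcal{C}$ to the adjunction $L\dashv(\mathbin{-}\otimes M)$, tells us that $L$ is a strong morphism, i.e.\ every $\ell$ is invertible. Hence each $U(\hat\ell)=\ell$ is invertible in $\mathcal{C}$. Finally $U$ is comonadic, hence conservative, so each $\hat\ell$ is itself invertible in $\mathcal{C}^G$. This shows the opmorphism structure on $\hat L$ is strong, and so $(M,\chi)$ has the right dual $\hat L(I)$ in $\mathcal{C}^G$, as desired.

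The substantive point — and the step I expect to require the most care — is the compatibility asserted in the second paragraph: that the doctrinal-adjunction mate producing the opmorphism structure of $\hat L$ is preserved by $U$, yielding $U(\hat\ell)=\ell$. This is exactly where strictness of $U$ as a monoidal functor and strictness of $(U,U)$ as a morphism of adjunctions enter; once this identity is in hand, conservativity of the comonadic $U$ delivers the conclusion with no further computation.
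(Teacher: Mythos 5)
Your proof is correct and takes essentially the same route as the paper's: both apply Remark~\ref{rmk:3} (doctrinal adjunction) to the lifted adjunction $\hat L\dashv(\mathbin{-}\otimes(M,\chi))$, use strictness of $U$ as a monoidal functor and of $(U,U)$ as a morphism of adjunctions to identify the image under $U$ of the opmorphism comparison maps of $\hat L$ with the corresponding invertible comparisons in $\mathcal{C}$, and then conclude by conservativity of the comonadic forgetful functor. The only cosmetic difference is that the paper takes $L=(\mathbin{-}\otimes M^\vee)$ explicitly, so that the comparisons downstairs are literally the associativity isomorphisms $(N\otimes N')\otimes M^\vee\cong N\otimes(N'\otimes M^\vee)$, whereas you keep $L$ abstract and invoke Remark~\ref{rmk:3} in $\mathcal{C}$ to see that they are invertible.
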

\begin{proof}
  Let $M^\vee$ be the right dual of $M$.
  By Remark~\ref{rmk:3}, the left adjoint $\hat L$ is an opmorphism with respect
  to the left action of $\mathcal{C}^G$ on itself, with structure given by
  morphisms $\lambda\colon \hat L((N,\nu)\otimes(N',\nu'))\to (N,\nu)\otimes\hat
  L(N',\nu')$ whose image under $U$ are the isomorphisms $(N\otimes N')\otimes
  M^\vee\cong N\otimes (N'\otimes M^\vee)$. Thus $\lambda$ is an isomorphism, and $\hat L$
  is isomorphic to $(\mathbin{-}\otimes\hat L(I))$, so $(M,\chi)$ has a right
  adjoint.
\end{proof}

\begin{prop}
  \label{prop:8}
  Let $(M,\chi)$ be a $\mathsf{G}$-coalgebra.
  If $\mathsf{G}$ is right (resp. left) Hopf, then $(M,\chi)$ has a right
  (resp. left) dual in $\mathcal{C}^G$ provided that $M$ have a right
  (resp. left) dual in $\mathcal{C}$.
\end{prop}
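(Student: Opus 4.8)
The plan is to obtain the statement by feeding the hypotheses into Lemmas~\ref{l:6} and~\ref{l:8}, which together already carry all the weight; the proof reduces to checking that those hypotheses are in force. I would treat the right Hopf case in full, and then dispatch the left case by the evident symmetry, interchanging the roles of left and right duals and of the fusion operators $H^\ell$ and $H^r$.

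First I would use the assumption that $M$ has a right dual in $\mathcal{C}$ to produce the left adjoint demanded by diagram~\eqref{eq:66}. By the discussion of dual pairs in Section~\ref{background}, a right dual of $M$ induces an adjunction in which $(\mathbin-\otimes M)\colon\mathcal{C}\to\mathcal{C}$ is the right adjoint, with left adjoint $L$ given by tensoring with that right dual. This places us precisely in the situation of the paragraph preceding Lemma~\ref{l:6}, so the bottom adjunction of~\eqref{eq:66} is available. Next I would invoke Lemma~\ref{l:6}: since $\mathsf{G}$ is right Hopf, that lemma guarantees the adjunction $L\dashv(\mathbin-\otimes M)$ lifts to $\mathcal{C}^G$, meaning that $(\mathbin-\otimes(M,\chi))$ admits a left adjoint $\hat L$ for which $(U,U)$ is a strict morphism of adjunctions and the whole of~\eqref{eq:66} commutes in the required sense. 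This is exactly the standing hypothesis of Lemma~\ref{l:8}.

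Finally, Lemma~\ref{l:8} applies directly and concludes that $(M,\chi)$ has a right dual in $\mathcal{C}^G$, which is the assertion. I do not anticipate a genuine obstacle at this stage, since the substance of the Hopf condition has already been absorbed into Lemma~\ref{l:6} (through invertibility of $H^r$) and the construction of the dual into Lemma~\ref{l:8}. The one point requiring care is the bookkeeping of handedness: one must confirm that the left adjoint $L$ furnished by a right dual of $M$ is the same $L$ whose lift Lemma~\ref{l:6} controls, and that ``right Hopf'' is correctly paired with ``right dual''. Once the conventions of Section~\ref{background} are fixed, this matching is routine, and the symmetric argument then settles the left Hopf case verbatim.
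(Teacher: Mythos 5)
Your proposal is correct and follows essentially the same route as the paper's own proof: use the right dual of $M$ to produce the adjunction $(\mathbin{-}\otimes M^\vee)\dashv(\mathbin{-}\otimes M)$, lift it to $\mathcal{C}^G$ via Lemma~\ref{l:6} using the right Hopf hypothesis, and conclude with Lemma~\ref{l:8}. The handedness bookkeeping you flag is indeed the only point of care, and you have matched it correctly.
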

\begin{proof}
  Suppose that $M^\vee$ is a right dual to $M$, so $L=(\mathbin{-}\otimes
  M^\vee)$ is a left adjoint to $(\mathbin{-}\otimes M)$. This adjunction
  lifts to an adjunction $\hat L\dashv(\mathbin{-}\otimes (M,\chi))$ on
  $\mathcal{C}^G$ by Lemma~\ref{l:6}. The result can now be deduced from
  Lemma~\ref{l:8}.
\end{proof}
\begin{lem}
  \label{l:12}
  Let $\mathsf{G}$ be a $\kappa$-accessible monoidal comonad on an accessible monoidal
  category $\mathcal{V}$. Then $\mathcal{V}^G$ is an accessible monoidal
  category, and locally presentable if $\mathcal{V}$ is so. Dualizable
  $\mathsf{G}$-coalgebras are $\kappa$-presentable.
\end{lem}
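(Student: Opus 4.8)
The plan is to verify the three assertions in turn, in each case reducing to Corollary~\ref{cor:8} and the clauses of Definition~\ref{def:accessible monoidal cat}. Since $\mathsf{G}$ is a $\kappa$-accessible comonad on the accessible category $\mathcal{V}$, Corollary~\ref{cor:8} already supplies that $\mathcal{V}^G$ is accessible, that the forgetful functor $U\colon\mathcal{V}^G\to\mathcal{V}$ is accessible, and --- most importantly --- that a $\mathsf{G}$-coalgebra is $\kappa$-presentable if and only if its underlying object is $\kappa$-presentable in $\mathcal{V}$. Enlarging $\kappa$ if necessary, I would assume throughout that $\mathcal{V}$ is $\kappa$-accessible \emph{as a monoidal category}, so that $I$ is $\kappa$-presentable and the $\kappa$-presentable objects of $\mathcal{V}$ are closed under $\otimes$.

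To promote accessibility to \emph{accessible monoidal}, I would check the remaining clauses of Definition~\ref{def:accessible monoidal cat} for $\mathcal{V}^G$, whose monoidal structure comes from the monoidal comonad structure on $\mathsf{G}$ and for which $U$ is strict monoidal. The unit of $\mathcal{V}^G$ has underlying object $I$, which is $\kappa$-presentable, so the unit itself is $\kappa$-presentable by Corollary~\ref{cor:8}. If $(M,\chi)$ and $(N,\nu)$ are $\kappa$-presentable, then $U(M)$ and $U(N)$ are $\kappa$-presentable, hence so is $U(M)\otimes U(N)=U(M\otimes N)$, and the converse half of Corollary~\ref{cor:8} returns that $M\otimes N$ is $\kappa$-presentable; thus $\kappa$-presentable coalgebras are closed under $\otimes$. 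For the $\kappa$-accessibility of the tensor product I would use that $U$, being comonadic, creates all colimits: since $U$ is strict monoidal, $U\circ(-\otimes N)=U(-)\otimes U(N)$ preserves $\kappa$-filtered colimits (the tensor product of $\mathcal{V}$ being $\kappa$-accessible), and because $U$ creates those colimits, $(-\otimes N)$ preserves them as well, and symmetrically in the other variable; for filtered colimits separate accessibility yields joint accessibility, so $\otimes$ is $\kappa$-accessible. This shows $\mathcal{V}^G$ is $\kappa$-accessible monoidal. When $\mathcal{V}$ is moreover locally presentable it is cocomplete, and $U$ creating colimits makes $\mathcal{V}^G$ cocomplete; being accessible and cocomplete, it is locally presentable.

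For the last assertion, let $(M,\chi)$ be a dualizable object of $\mathcal{V}^G$. Strict monoidal functors preserve dual pairs, so $U(M)$ is dualizable in $\mathcal{V}$. I would then observe that any dualizable object $X$ of a $\kappa$-accessible monoidal category is automatically $\kappa$-presentable: the isomorphism $\mathcal{V}(X,-)\cong\mathcal{V}(I,X^\vee\otimes-)$ exhibits $\mathcal{V}(X,-)$ as a composite of the $\kappa$-accessible functors $X^\vee\otimes(-)$ and $\mathcal{V}(I,-)$. Hence $U(M)$ is $\kappa$-presentable, and Corollary~\ref{cor:8} yields that $(M,\chi)$ is $\kappa$-presentable.

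The routine clauses are all direct appeals to Corollary~\ref{cor:8}, so the only step needing genuine care is the $\kappa$-accessibility of the tensor product on $\mathcal{V}^G$: there one must combine the creation of colimits by the comonadic $U$ with its strict monoidality and the accessibility of $\otimes$ on $\mathcal{V}$, and verify that ``$U$ creates $\kappa$-filtered colimits while $U\circ(-\otimes N)$ preserves them'' does force $(-\otimes N)$ to preserve them --- which is precisely the defining property of created colimits.
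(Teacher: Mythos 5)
Your proof is correct and structurally parallel to the paper's: both rest everything on Corollary~\ref{cor:8} (accessibility of $\mathcal{V}^G$, accessibility of $U$, and the transfer of presentability along $U$), and both deduce presentability of dualizable coalgebras from an isomorphism of the form $\mathcal{V}(M,-)\cong\mathcal{V}(I,-\otimes M^\vee)$ together with $\kappa$-presentability of $I$ and $\kappa$-accessibility of $\otimes$. The one step where you genuinely diverge is the $\kappa$-accessibility of the tensor product on $\mathcal{V}^G$: the paper invokes \cite[Prop.~2.4.10]{MR1031717}, which says that for $U$ conservative and accessible, accessibility of the composite $U\circ((M,\chi)\otimes-)$ implies accessibility of $((M,\chi)\otimes-)$; you instead argue directly from the fact that the comonadic $U$ creates colimits, which combined with strict monoidality of $U$ and accessibility of $\otimes_{\mathcal{V}}$ forces $(-\otimes N)$ to preserve $\kappa$-filtered colimits. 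Your argument is sound (creation plus preservation of the image cone does identify the lifted cone as colimiting) and is more self-contained, at the cost of using the stronger comonadicity property where the paper only needs conservativity and accessibility of $U$; you also make explicit several points the paper leaves implicit (unit presentability, closure of presentables under $\otimes$, separate-to-joint accessibility for filtered colimits, and cocompleteness in the locally presentable case). One caveat: your opening move of enlarging $\kappa$ weakens the final assertion --- the lemma claims dualizable coalgebras are $\kappa$-presentable for the \emph{given} $\kappa$, which is how it is quoted in Proposition~\ref{prop:9}, whereas after enlargement you only obtain $\kappa'$-presentability. The paper instead reads the hypotheses as already supplying that $I$ is $\kappa$-presentable and that $\otimes$ preserves $\kappa$-filtered colimits (this is implicit in calling $\mathsf{G}$ a $\kappa$-accessible monoidal comonad, since a $\kappa$-accessible functor has $\kappa$-accessible domain); under that reading your argument runs verbatim at the original $\kappa$, so the enlargement should simply be dropped.
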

\begin{proof}
  It was mentioned in Corollary~\ref{cor:8} that $\mathcal{V}^G$ is an
  accessible category, with accessible forgetful functor $U$ to $\ca{V}$.
  It remains to be shown
  that the functor $((M,\chi)\otimes-)$ is accessible for any
  $\mathsf{G}$-coalgebra $(M,\chi)$, and similarly tensoring on the other
  side. This is a consequence of \cite[Prop.~2.4.10]{MR1031717}, since $U$ is
  conservative and accessible, and $U((M,\chi)\otimes-)$ is the accessible
  $M\otimes U(-)$. Therefore $\ca{V}^G$ is accessible monoidal as in
  Definition \ref{def:accessible monoidal cat}.

  The hypotheses that $G$ is $\kappa$-accessible and that $\mathcal{V}$ is
  accessible as a monoidal category tell us that $\mathcal{V}$ is
  $\kappa$-accessible and its unit object $I$ is $\kappa$-presentable.
  If $M$ is an object of $\mathcal{V}$ with a left dual, then
  $\mathcal{V}(M,-)\cong\mathcal{V}(I,-\otimes M^\vee)$ is accessible,
  so $M$ is $\kappa$-presentable. If $(M,\chi)$ is a $\mathsf{G}$-coalgebra,
  then $(M,\chi)$ is presentable with presentability degree at least that of $M$
  and that of $G$, ie at least $\kappa$, by Corollary~\ref{cor:8}.
\end{proof}

\begin{prop}
  \label{prop:9}
  Let $\mathsf{G}$ be a $\kappa$-accessible monoidal comonad on the
  accessible monoidal category $\mathcal{V}$. Assume that each
  $\mathsf{G}$-coalgebra is a $\kappa$-filtered colimit of right (resp. left)
  dualizable $\mathsf{G}$-coalgebras. Then $\mathsf{G}$ is right (resp. left) Hopf if and
  only if each right (resp. left) dualizable $\mathsf{G}$-coalgebra has a right
  (resp. left) dual in $\mathcal{C}^G$.
\end{prop}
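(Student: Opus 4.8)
The plan is to prove the two implications separately, the forward one being immediate and essentially all of the content sitting in the converse.

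\emph{Forward direction.} A right dualizable $\mathsf{G}$-coalgebra is, by definition, a $\mathsf{G}$-coalgebra $(M,\chi)$ whose underlying object $M$ has a right dual in $\ca{V}$. If $\mathsf{G}$ is right Hopf, then Proposition~\ref{prop:8} applies immediately and produces a right dual of $(M,\chi)$ in $\ca{V}^G$, so there is nothing further to check; the left-handed case is symmetric.

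\emph{Converse, reduction to the dualizable case.} Recall that $\mathsf{G}$ being right Hopf means that $H^r$ of~\eqref{eq:63} is invertible, which is equivalent to invertibility of the morphisms $\bar H^r_{X,M}\colon G(X)\otimes M\to G(X\otimes M)$ of~\eqref{eq:64} for all $X\in\ca{V}$ and all $\mathsf{G}$-coalgebras $(M,\chi)$. I would work with $\bar H^r$ rather than $H^r$ precisely because $\bar H^r$ is natural in the coalgebra variable, which is what lets colimits enter. Using the hypothesis, I write an arbitrary coalgebra as a $\kappa$-filtered colimit $(M,\chi)=\colim_i(M_i,\chi_i)$ of right dualizable ones. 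Since the forgetful functor is $\kappa$-accessible (Corollary~\ref{cor:8}), $M=\colim_i M_i$ in $\ca{V}$; since the tensor product is $\kappa$-accessible, $G(X)\otimes(-)$ preserves this colimit, and since $X\otimes(-)$ and $G$ are both $\kappa$-accessible so does $G(X\otimes(-))$. Naturality of $\bar H^r$ in the coalgebra then identifies $\bar H^r_{X,M}$ with the map induced on colimits by the $\bar H^r_{X,M_i}$; as a $\kappa$-filtered colimit of isomorphisms is an isomorphism, it suffices to prove each $\bar H^r_{X,M_i}$ invertible. This step is routine given accessibility, and its role is simply to package the specific hypotheses of the proposition.

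\emph{Converse, the dualizable case.} Fix a right dualizable $(M,\chi)$; by hypothesis it admits a right dual $(M^\vee,\chi^\vee)$ in $\ca{V}^G$, whose underlying object $M^\vee$ is a right dual of $M$ in $\ca{V}$ because the forgetful functor is strict monoidal. The existence of this dual is exactly a lift to $\ca{V}^G$ of the adjunction $(-\otimes M^\vee)\dashv(-\otimes M)$, namely $\bigl(-\otimes(M^\vee,\chi^\vee)\bigr)\dashv\bigl(-\otimes(M,\chi)\bigr)$, making $(U,U)$ a strict morphism of adjunctions as in~\eqref{eq:66}. Invertibility of $\bar H^r_{-,M}$ at this single object should then follow from the pointwise content of Lemma~\ref{l:6}: the presence of such a lift forces the associated fusion operator to be invertible. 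Concretely, this is the backward direction of the comonad-dual of the theorem of~\cite{MR2793022} applied at one object, or equivalently one reverses the doctrinal-adjunction computation behind Lemma~\ref{l:8}, using Remark~\ref{rmk:3} to match ``$M$ has a right dual'' with ``the lifted left adjoint is a strong morphism''.

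The hard part will be exactly this last identification. Lemma~\ref{l:6} is phrased globally (``for all coalgebras''), whereas the converse needs its object-wise version at a single dualizable coalgebra, and some care is required to confirm that the lift supplied by a dual in $\ca{V}^G$ is the \emph{same} adjunction whose mate is $\bar H^r_{-,M}$. I expect to settle this either by extracting the pointwise statement from~\cite{MR2793022}, or, more self-containedly, by writing the inverse of $\bar H^r_{X,M}$ explicitly in terms of the duality maps $\ev$, $\coev$ and the comonad structure and checking the triangle and comonad identities. By contrast, the forward direction and the colimit reduction are comparatively mechanical.
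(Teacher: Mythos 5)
Your proposal is correct and follows essentially the same route as the paper: the forward implication via Proposition~\ref{prop:8}, and the converse by writing an arbitrary $\mathsf{G}$-coalgebra as a $\kappa$-filtered colimit of right dualizable ones and using that the domain and codomain of $\bar H^r_{X,-}$ preserve $\kappa$-filtered colimits in the coalgebra variable, so that invertibility need only be checked on dualizable coalgebras. The ``hard part'' you single out---that a right dual of $(M,\chi)$ in $\mathcal{V}^G$ gives a lifting of $({-}\otimes M^\vee)\dashv({-}\otimes M)$ as in~\eqref{eq:66}, and that the existence of such a lifting forces $\bar H^r_{X,M}$ to be invertible for every $X$---is exactly the pointwise content of the dual of Theorem~3.13 of~\cite{MR2793022} underlying Lemma~\ref{l:6}; the paper's own proof leaves this step entirely implicit, and it does go through (for instance, once the lifted left adjoint $\hat L$ exists with $U\hat L=LU$, both $G(X)\otimes(M,\chi)$ and $G(X\otimes M)$ represent the functor $N\mapsto\mathcal{C}(U(N),X\otimes M)$ on $(\mathcal{C}^G)^{\mathrm{op}}$, and the induced isomorphism is $\bar H^r_{X,M}$ by compatibility of mates), so your treatment is if anything more explicit than the paper's.
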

\begin{proof}
  We briefly deal with the case of right dualizable $\mathsf{G}$-coalgebras. The direct
  implication holds by Proposition~\ref{prop:8}. For the converse, dualizable
  objects of $\mathcal{V}^G$ are $\kappa$-presentable, by Lemma~\ref{l:12}.
  By the $\kappa$-accessibility
  of $G$, the domain and codomain of $\bar H^r$ (see~\eqref{eq:64}) preserve
  $\kappa$-filtered colimits, so $\bar H^r$ is an isomorphism if it is so on
  dualizable objects.
\end{proof}

\subsection{Hopf (co)monads induced by Hopf monoids}
\label{sec:hopf-monoids}

Let $\mathcal{C}$ be a braided monoidal category and
$(H,\Delta,\varepsilon,\mu,\iota)$ bimonoid in it. There are eight questions that
we may naturally ask ourselves about $H$.
\begin{quotation}
  Is the monad $(H\otimes\mathbin-)$ left or right Hopf? Is the monad
  $(\mathbin-\otimes H)$ left or right Hopf? Is the comonad
  $(H\otimes\mathbin-)$ left or right Hopf? Is the comonad $(\mathbin-\otimes
  H)$ left or right Hopf?
\end{quotation}
In this section, we give
the answers to the above questions in terms of the Hopf maps of $H$.
There are four such operators --~see
sections \ref{sec:hopf-monads} and \ref{sec:hopf-comonads}~-- depicted in
Figure~\ref{fig:2}. We call the first two maps \emph{fusion operators} and the
last two \emph{opfusion operators}. The contents of this section derive
from~\cite[Lemma~5.1]{MR2793022}; our contribution consists in the inclusion of
the dual statements --~ie those for right Hopf monads~-- and a condensed proof.

\begin{prop}\label{prop:10}
\begin{enumerate}[leftmargin=*]
  \item \label{item:25} The following are equivalent for a bimonoid $H$.
    \begin{enumerate}
    \item \label{item:11} $H$ is a Hopf monoid.
    \item \label{item:12} The two fusion operators of Figure~\ref{fig:2} are
      invertible.
    \item \label{item:34} Any one of the two fusion operators in
      Figure~\ref{fig:2} is invertible.
    \item \label{item:17} The opmonoidal monad $(H\otimes\mathbin-)$ is left
      Hopf.
    \item \label{item:18} The opmonoidal monad $(\mathbin-\otimes H)$ is right
      Hopf.
    \item \label{item:23} The monoidal comonad $(H\otimes\mathbin-)$ is right
      Hopf.
    \item \label{item:24} The monoidal comonad $(\mathbin-\otimes H)$ is left
      Hopf.
    \end{enumerate}
  \item \label{item:26} The following are equivalent for a bimonoid $H$.
    \begin{enumerate}[resume]
    \item \label{item:27} $H$ is an op-Hopf monoid.
    \item \label{item:28} The two opfusion operators of Figure~\ref{fig:2} are
      invertible.
    \item \label{item:35} Any one of the two opfusion operators in
      Figure~\ref{fig:2} is invertible.
    \item \label{item:29} The opmonoidal monad $(H\otimes\mathbin-)$ is right
      Hopf.
    \item \label{item:30} The opmonoidal monad $(\mathbin-\otimes H)$ is left
      Hopf.
    \item \label{item:31} The monoidal comonad $(H\otimes\mathbin-)$ is left
      Hopf.
    \item \label{item:32} The monoidal comonad $(-\otimes H)$ is right
      Hopf.
    \end{enumerate}
  \end{enumerate}
\end{prop}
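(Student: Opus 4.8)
The plan is to reduce all seven equivalences in each part to a single classical fact: a canonical \emph{fusion map} $H\otimes H\to H\otimes H$ is invertible if and only if $H$ has an antipode. The argument splits into two conceptual steps, one about the \emph{shape} of the various fusion operators and one about what their invertibility \emph{means}, after which every assertion in the proposition follows by matching conditions to one of four canonical maps on $H\otimes H$. Throughout I work element-free, writing the candidate left and right fusion maps of $H$ as the composites $\gamma=(H\otimes\mu)\cdot(\Delta\otimes H)$ and $\gamma'=(\mu\otimes H)\cdot(H\otimes\Delta)$ on $H\otimes H$, together with their $c^{-1}$-twisted variants, which are the four maps drawn in Figure~\ref{fig:2} (two fusion, two opfusion).

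First I would compute the eight relevant operators explicitly. For the opmonoidal monad $(H\otimes-)$ with structure \eqref{eq:68}, expanding the left fusion operator \eqref{eq:69} gives a map $H\otimes X\otimes H\otimes Y\to H\otimes X\otimes H\otimes Y$ which, after using the braiding $c_{H,X}$ to reposition the comultiplied factor, is precisely $\gamma$ acting on the two $H$-factors tensored with $1_X\otimes 1_Y$; at $X=Y=I$ it reduces to $\gamma$ itself. The comonad fusion operators \eqref{eq:63} and \eqref{eq:61}, built from $\mu$, $\Delta$ and $c$, expand to operators of exactly the same form, each reducing at $X=Y=I$ to one of the four canonical maps, with $c$ replaced by $c^{-1}$ in the opfusion cases. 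This yields the \emph{reduction-to-the-unit} lemma: since each fusion operator is a canonical map on $H\otimes H$ conjugated by braiding isomorphisms and tensored with identities $1_X$, $1_Y$, it is invertible for all $X,Y$ exactly when the canonical map at the unit is invertible. This already collapses the four (co)monad conditions \ref{item:17}--\ref{item:24} of part~\ref{item:25} onto the two fusion maps of \ref{item:12}, and the four conditions \ref{item:29}--\ref{item:32} of part~\ref{item:26} onto the two opfusion maps of \ref{item:28}.

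The second step is the braided form of the classical fact: $\gamma$ is invertible iff $H$ has an antipode. For the forward direction I would exhibit the explicit inverse $(H\otimes\mu)\cdot(H\otimes s\otimes H)\cdot(\Delta\otimes H)$ and verify both composites are the identity using the two antipode axioms of Definition~\ref{df:4}; the same works for $\gamma'$, so each individual fusion map alone is equivalent to the existence of the antipode, giving the ``any one of the two'' clause \ref{item:34} as well as \ref{item:12}. Conversely, from an inverse of $\gamma$ I would \emph{define} $s=(\varepsilon\otimes H)\cdot\gamma^{-1}\cdot(H\otimes\iota)$ and check it satisfies the antipode equations. Replacing $c$ by $c^{-1}$ throughout (equivalently, working with $H^{\mathrm{cop},c^{-1}}$) turns $\gamma$ into the opfusion maps and $s$ into an opantipode, establishing part~\ref{item:26} and in particular \ref{item:27}$\Leftrightarrow$\ref{item:28}$\Leftrightarrow$\ref{item:35}.

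Rather than grind through all eight operators, I would prove only the representative equivalence \ref{item:11}$\Leftrightarrow$\ref{item:17} in full---this is essentially \cite[Lemma~5.1]{MR2793022}---and deduce the remaining cases from three dualities: reversing the tensor product of $\mathcal{C}$ interchanges $(H\otimes-)$ with $(-\otimes H)$ and left-Hopf with right-Hopf; passing to $\mathcal{C}^{\mathrm{op}}$ interchanges the opmonoidal monad with the monoidal comonad on the same underlying functor while preserving bimonoids and antipodes; and inverting the braiding $c\mapsto c^{-1}$ interchanges part~\ref{item:25} with part~\ref{item:26}. The main obstacle I anticipate is \emph{bookkeeping}: verifying that each duality sends a given left/right, monad/comonad condition to exactly the one the statement claims, since the interaction of the braiding with the reversal is precisely where the fusion/opfusion distinction---and hence the split between the two parts---is decided. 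Once this is settled, the substantive mathematics is confined to the single reduction-to-unit computation and the explicit antipode formula.
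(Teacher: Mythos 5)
Your proposal is correct in substance, and its two working parts are the same as the paper's: (i) a ``reduction to the unit'' showing that each (co)monad fusion operator is the canonical map on $H\otimes H$ tensored with identities and conjugated by braidings --- this is exactly the paper's formula~\eqref{eq:74}, applied to the four operators listed in Figure~\ref{fig:4} --- and (ii) the equivalence between invertibility of such a canonical map and existence of an antipode. The organisational differences are real, though. For (ii) the paper does not argue by explicit formulas: it introduces $\Phi(f)=(H\otimes\mu)\cdot(H\otimes f\otimes H)\cdot(\Delta\otimes H)$ (and an analogous anti-morphism $\Phi'$ for the second fusion operator), proves $\Phi$ is a monoid morphism from the convolution monoid to the composition monoid whose image is exactly the set of endomorphisms of $H^{\otimes 2}$ that are simultaneously left $H$-comodule and right $H$-module maps, and concludes that $1_H$ is convolution invertible iff $\Phi(1_H)=h$ is invertible. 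Your explicit inverse $\Phi(s)$ and your recovery formula $s=(\varepsilon\otimes H)\cdot\gamma^{-1}\cdot(H\otimes\iota)$ are the same mathematics, but be aware that your converse step (``check $s$ satisfies the antipode equations'') secretly needs the same lemma: one must first know that $\gamma^{-1}$ is again a module--comodule map, hence of the form $\Phi(s)$, before the antipode axioms fall out of $\gamma\cdot\gamma^{-1}=\gamma^{-1}\cdot\gamma=1$; the paper's $\Phi$ is precisely the packaging of that fact. For the multiplicity of cases, the paper simply computes all four part-(\ref{item:25}) operators and reduces each to $h$ or $h'$, whereas you prove one representative equivalence and dualise (tensor reversal, opposite category, mirror braiding); your route is more conceptual, at the price of verifying that each duality carries the structure~\eqref{eq:68} to the intended opmonoidal/monoidal structure on $(\mathbin{-}\otimes H)$ and on the comonads --- true for the conventions $c^{\mathrm{rev}}_{X,Y}=c_{Y,X}$ and the analogous choice on the opposite category, but this is exactly the bookkeeping you flag, and it is not optional.

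One of your three dualities needs repair as stated: ``inverting the braiding $c\mapsto c^{-1}$ interchanges part~(\ref{item:25}) with part~(\ref{item:26})'' is false for $H$ itself, because $H$ with its given structure maps is \emph{not} a bimonoid with respect to the inverse braiding (the compatibility axiom uses $c_{H,H}$, not $c^{-1}_{H,H}$). The correct route, which your second paragraph gestures at, is to pass to $H^{\mathrm{cop},c^{-1}}$, which \emph{is} a bimonoid in the mirror category, and whose antipode is by definition an opantipode for $H$ (Definition~\ref{df:4}(\ref{item:41})). Even then, the fusion map of $H^{\mathrm{cop},c^{-1}}$ is not literally an opfusion operator of Figure~\ref{fig:2}: by naturality and the hexagon one gets $\bar h=c_{H,H}\cdot(H\otimes\mu)\cdot(c^{-1}_{H,H}\otimes H)\cdot(\Delta\otimes H)$, so the two agree only up to composition with a braiding isomorphism --- harmless for invertibility, but it must be said, since this discrepancy is precisely where the fusion/opfusion split between the two parts of the proposition is decided. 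With that correction, and with the duality bookkeeping carried out, your plan yields a complete proof.
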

\begin{proof}
  We prove the equivalences of the statements in
  (\ref{item:25}), leaving the proof of the equivalences in (\ref{item:26}) for the reader. Consider the function
  \begin{equation}
    \label{eq:27}
    \Phi\colon
    \mathcal{V}(H,H)\longrightarrow \mathcal{V}(H^{\otimes2},H^{\otimes2}),\quad
    f\mapsto (H\otimes\mu)\cdot (H\otimes f\otimes H)\cdot (\Delta\otimes H).
  \end{equation}
  It is easy to verify that $\Phi$ is a monoid morphism if the domain has the
  convolution product induced by the bimonoid $H$, and the codomain has the
  product given by composition. In fact, $\Phi$ is an isomorphism onto the
  monoid consisting of those endomorphisms that are simultaneously: endomorphisms
  of left $H$-comodules on the cofree left $H$-comodule $H^{\otimes 2}$; and,
  endomorphisms of free right $H$-modules on the free left $H$-module
  $H^{\otimes2}$. From these considerations it follows that $1_H$ has a
  convolution inverse, ie there exists an antipode, if and only if $\Phi(1_H)$,
  the first fusion operator in Figure~\ref{fig:2}, is invertible.

  Now consider the function
  \begin{equation}
    \label{eq:67}
    \Phi'\colon
    \mathcal{V}(H,H)\longrightarrow \mathcal{V}(H^{\otimes2},H^{\otimes2}),\quad
    f\mapsto (\mu\otimes H)\cdot(H\otimes f\otimes H)\cdot(H\otimes \Delta).
  \end{equation}
  Again, it is easy to verify that $\Phi'$ is an anti-morphism of monoids when
  the domain is equipped with convolution and the codomain with
  composition. Furthermore, it is an isomorphism onto the submonoid of those
  endomorphisms of $H^{\otimes2}$ that are simultaneously: right $H$-comodule
  endomorphisms on the cofree $H$-comodule $H^{\otimes2}$; and, left $H$-module
  endomorphisms on the free left $H$-module $H^{\otimes2}$. Therefore, there
  exists an antipode for $H$ if and only if $\Phi'(1_H)$, which is the second
  fusion operator in Figure~\ref{fig:2}, is invertible. These first two
  paragraphs of the proof show the equivalence between the conditions
  (\ref{item:11}), (\ref{item:12}) and~(\ref{item:34}).

  The left fusion operator of the opmonoidal monad $(H\otimes-)$, the right
  fusion operator of the opmonoidal monad $(-\otimes H)$, the right fusion
  operator of the monoidal comonad $(H\otimes-)$ and the left fusion operator of
  the monoidal comonad $(-\otimes H)$, have components
  depicted in the respective order in Figure~\ref{fig:4}.
  \begin{figure}
  \begin{gather}
    HXHY\xrightarrow{\Delta XHY}HHXHY\xrightarrow{Hc_{H,X}HY}HXHHY
    \xrightarrow{HX\mu Y} HXHY
    \\
    XHYH\xrightarrow{XHY\Delta}XHYHH \xrightarrow{XHc_{YH}H}XHHYH
    \xrightarrow{X\mu YH} XHYH
    \\
    HXHY\xrightarrow{HX\Delta Y}HXHHY\xrightarrow{Hc_{X,H}HY} HHXHY
    \xrightarrow{\mu XHY}HXHY
    \\
    XHYH\xrightarrow{X\Delta YH} XHHYH\xrightarrow{XHc_{H,Y}H}XHYHH
    \xrightarrow{XHY\mu}XHYH
  \end{gather}
  \caption{Fusion operators of the monads $(H\otimes-)$ and $(-\otimes H)$, and
    of the comonads $(H\otimes-)$ and $(-\otimes H)$.}\label{fig:4}
  \end{figure}
  Setting $X=Y=I$ in these fusion operators we obtain, respectively, $h$, $h'$, $h'$ and $h$. In fact, the
  general components can be easily obtained from $h$ and $h'$ by tensoring with
  $X$ and $Y$ and composing with the braiding; for example, the first composite is
  obtained as
  \begin{equation}
    (H\otimes c_{H,X}\otimes Y)\cdot (h\otimes X\otimes Y)\cdot
    (H\otimes c_{X,H}^{-1}\otimes Y)
    \label{eq:74}
  \end{equation}
  Therefore, any one of the four fusion operators depicted above is invertible if
  and only if either $h$ or $h'$ is invertible, completing the proof
  of~\ref{item:25}.
\end{proof}

\begin{figure}
  \begin{gather}
    h 
    \colon H^{\otimes 2}\xrightarrow{\Delta\otimes 1} H^{\otimes3}
    \xrightarrow{1\otimes\mu} H^{\otimes2}
    \\
    h'
    \colon H^{\otimes2}\xrightarrow{1\otimes \Delta} H^{\otimes3}
    \xrightarrow{\mu\otimes1} H^{\otimes2}
    \\
    \bar h 
    \colon H^{\otimes 2}\xrightarrow{\Delta\otimes1} H^{\otimes
      3}\xrightarrow{1\otimes c_{H,H}}H^{\otimes 3} \xrightarrow{\mu\otimes1}
    H^{\otimes 2}
    \\
    \bar h' 
    \colon H^{\otimes2} \xrightarrow{1\otimes \Delta} H^{\otimes3}
    \xrightarrow{c_{H,H}\otimes 1} H^{\otimes3}\xrightarrow{1\otimes\mu}
    H^{\otimes2}
  \end{gather}
  \caption{The two fusion operators (top) and the two opfusion operators (bottom).}
\label{fig:2}
\end{figure}

\subsection{Universal measuring comonoids and Hopf monoids}
\label{sec:univ-meas-coalg}

Recall from Section \ref{Kleislicats} the Kleisli category for a monoidal monad on
a monoidal category, and from Definition \ref{df:3} the category
$\avb$, for a pair of monoids $A$, $B$: it is the pullback (\ref{eq:24}) of the forgetful
$(\mathcal{V}_B)^{(A\otimes-)}\to\mathcal{V}_B$ along the universal Kleisli
functor $F_B\colon\mathcal{V}\to\mathcal{V}_B$. Here $\mathcal{V}_B$ is the
Kleisli category of the monad $(-\otimes B)$. The monad $(-\otimes B)$ is monoidal
if the monoid $B$ is commutative. Furthermore, it is an easy calculation to verify
that if $\mathcal{V}$ is a \emph{symmetric} monoidal category, then
$(-\otimes B)$ is a braided monoidal functor when $B$ is commutative, see
Lemma \ref{Hsymmetriclax}; then
$\mathcal{V}_B$ is a symmetric monoidal category.

\begin{prop}
  \label{prop:4}
  Let $\mathcal{V}$ be a symmetric monoidal closed category and $B$ a
  commutative monoid in it.
  Suppose that $A$ is a Hopf (resp. op-Hopf) monoid in $\mathcal{V}$. Then $\avb$
  is monoidal and an object $(X,\psi)\in\avb$ has a left (resp. right) dual if
  and only if $X$ has a dual in $\mathcal{V}$.
\end{prop}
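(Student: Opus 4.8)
The plan is to present $\avb$ as a wide subcategory of the Eilenberg--Moore category of an opmonoidal monad on the Kleisli category $\mathcal{V}_B$, so that both the monoidal structure and the duality statement come from the Hopf monad theory of Sections~\ref{sec:hopf-monads}--\ref{sec:hopf-comonads}. Since $B$ is commutative and $\mathcal{V}$ is symmetric, $\mathcal{V}_B$ is a symmetric monoidal category and the Kleisli functor $F_B\colon\mathcal{V}\to\mathcal{V}_B$ is strict symmetric monoidal. As $A$ is a bimonoid, $F_B(A)$ is a bimonoid in $\mathcal{V}_B$, so $\mathsf{T}=(F_B(A)\otimes\mathbin{-})$ is an opmonoidal monad with structure maps~\eqref{eq:68}; hence $(\mathcal{V}_B)^{\mathsf{T}}$ is monoidal with strict monoidal forgetful functor, by the theorem of~\cite{MR1887157} recalled above. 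Through the pullback~\eqref{eq:24}, $\avb$ is the identity-on-objects wide subcategory of $(\mathcal{V}_B)^{\mathsf{T}}$ whose morphisms are those $\mathsf{T}$-algebra morphisms lying in the image of $F_B$; since $F_B$ is strict monoidal this subcategory is closed under tensor and contains the coherence isomorphisms, so it inherits a monoidal structure --- the one spelled out in Corollary~\ref{cor:1} --- and the forgetful functor $\avb\to\mathcal{V}$ is strict monoidal. This proves that $\avb$ is monoidal.

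The forward implication of the duality statement is immediate: a strict monoidal functor preserves duals, so if $(X,\psi)$ has a left (resp.\ right) dual in $\avb$ then $X$ has one in $\mathcal{V}$, and since $\mathcal{V}$ is symmetric this just says $X$ is dualizable. For the converse I would transport the Hopf hypothesis along $F_B$. Being strict monoidal, $F_B$ carries the antipode (resp.\ opantipode) of $A$ to one of $F_B(A)$, so $F_B(A)$ is a Hopf (resp.\ op-Hopf) monoid in $\mathcal{V}_B$; by Proposition~\ref{prop:10} the opmonoidal monad $\mathsf{T}$ is therefore left (resp.\ right) Hopf. If $X$ has a dual in $\mathcal{V}$ then $F_B(X)$ has one in $\mathcal{V}_B$, and the monad counterpart of Proposition~\ref{prop:8} --- which is exactly the case of Theorem~3.6 of~\cite{MR2793022} that the present paper leaves to the reader --- produces a left (resp.\ right) dual $(X^{\vee},\psi^{\vee})$ of $(X,\psi)$ inside $(\mathcal{V}_B)^{\mathsf{T}}$.

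It then remains to descend this dual to $\avb$. The underlying $\mathcal{V}_B$-object of the dual is $F_B(X^{\vee})$, so $(X^{\vee},\psi^{\vee})$ is already an object of $\avb$; and the evaluation and coevaluation in $(\mathcal{V}_B)^{\mathsf{T}}$ project, under the strict monoidal forgetful functor, onto the Kleisli duality data $F_B(\mathrm{ev}_X)$ and $F_B(\mathrm{coev}_X)$. By the pullback correspondence~\eqref{eq:24} --- a $\mathcal{V}$-morphism $f$ is a morphism of $\avb$ precisely when $F_B(f)$ underlies a $\mathsf{T}$-algebra morphism --- it follows that $\mathrm{ev}_X$ and $\mathrm{coev}_X$ are morphisms of $\avb$ relative to $\psi^{\vee}$. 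The triangle identities then need no further work: they already hold in $\mathcal{V}$ because $X$ is dualizable there, and the forgetful functor $\avb\to\mathcal{V}$ is faithful and strict monoidal, so they hold in $\avb$. Hence $(X^{\vee},\psi^{\vee})$ is the required dual.

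The main obstacle is twofold. First, I must supply the monad version of Proposition~\ref{prop:8}, the paper having proved only the comonad version; here I rely on the fact, noted in Section~\ref{sec:hopf-comonads}, that~\cite{MR2793022} treats the monadic case directly. Second, and more delicately, I must check that the duality data produced there genuinely descends along $F_B$ to honest morphisms of $\avb$ rather than to merely $B$-twisted morphisms of the larger category $(\mathcal{V}_B)^{\mathsf{T}}$; the argument above resolves this by reading the evaluation and coevaluation off the strict monoidal projection to $\mathcal{V}_B$ and invoking the characterisation of $\avb$-morphisms coming from the pullback~\eqref{eq:24}.
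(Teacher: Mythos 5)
Your proposal is correct and follows essentially the same route as the paper's proof: transport the Hopf (resp.\ op-Hopf) monoid $A$ along the strict symmetric monoidal Kleisli functor $F_B$, invoke the Hopf-monad results to get a monoidal Eilenberg--Moore category $(\mathcal{V}_B)^{(A\otimes-)}$ with duals for dualizable algebras, and descend both structures through the pullback~\eqref{eq:24}. The ``delicate point'' you flag --- that the evaluation and coevaluation produced upstairs lie over $F_B(\mathrm{ev}_X)$ and $F_B(\mathrm{coev}_X)$, so that the dual really lands in $\avb$ --- is exactly what the paper compresses into the phrase ``the forgetful functor into $\mathcal{V}_B$ preserves evaluation and coevaluation \ldots{} by the definition of $\avb$ as a pullback'', and your resolution of it is sound.
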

\begin{proof}
  The category $\mathcal{V}_B$ is symmetric monoidal and
  the Kleisli functor $F_B\colon\mathcal{V}\to\mathcal{V}_B$ is
  braided and strict monoidal; see Section \ref{Kleislicats}. Thus,
  $A=F_B(A)$ is a Hopf (resp. op-Hopf) monoid in
  $\mathcal{V}_B$, so the category  $(\mathcal{V}_B)^{(A\otimes\mathbin{-})}$ of
  left $A$-modules in $\mathcal{V}_B$ is monoidal and the forgetful functor
  $(\mathcal{V}_B)^{(A\otimes\mathbin{-})}\to\mathcal{V}_B$ strict
  monoidal. Then, the pullback $\avb$ defined in~\eqref{eq:24} is
  a monoidal category and the forgetful $\avb\to\mathcal{V}$ is strict
  monoidal. Suppose that an object $(X,\psi)$ of $\avb$ is dualizable, ie $X$
  has a dual in $\mathcal{V}$. Then $F_B(X)$ has a dual in $\mathcal{V}_B$, by
  the strict monoidality of $F_B$, so the projection $(X,\varphi)$ of $(X,\psi)$ to
  $(\mathcal{V}_B)^{(A\otimes-)}$ is dualizable. By the hypothesis of being
  Hopf (resp. op-Hopf), we have that $(X,\varphi)$ has a left (resp. right) dual and the
  forgetful functor into $\mathcal{V}_B$ preserves evaluation and
  coevaluation. It follows that $(X,\psi)\in\avb$ has a left (resp. right) dual
  by the definition of $\avb$ as a pullback.
\end{proof}

\begin{df}
  A braided monoidal category $\mathcal{V}$ is said to satisfy the
  \emph{fundamental theorem of comodules} if, for each comonoid $C$ in
  $\mathcal{V}$, each $C$-comodule is a filtered colimit of dualizable
  $C$-comodules.\label{df:6}
\end{df}
\begin{rmk}
  The previous definition elicits a number of comments. First, it seems possible
  to drop the assumption that the monoidal category be braided, distinguishing
  between left and right dualizable objects. We prefer to keep the definition
  more readable by retaining the braiding assumption.

  Secondly, \cite{McCrudden:Maschkean}~says that $\mathcal{V}$ satisfies the
  fundamental theorem of comodules if, for each comonoid $C$, each $C$-comodule
  is filtered colimit of dualizable strong subobjects. We do not require the
  colimit to be one of subobjects.

  Thirdly, one might think that there is a certain ambiguity in our definition with respect
  to left and right $C$-comodules. It is not a real one, however, since left
  $C$-comodules are right comodules over the opposite comonoid.
\label{rmk:5}
\end{rmk}


\begin{thm}
  \label{thm:4}
  Let $\mathcal{V}$ be a locally presentable symmetric monoidal closed category
  that satisfies the fundamental theorem of comodules, $A$ a Hopf (resp. op-Hopf)
  monoid and $B$ a commutative monoid. Then
  $P(A,B)$ is a Hopf (resp. op-Hopf) monoid.
\end{thm}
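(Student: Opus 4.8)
The plan is to recognise $P(A,B)$ as a bimonoid and then reduce the construction of an antipode (resp.\ opantipode) to a question about duals of its comodules, exploiting the Hopf-comonad machinery of Section~\ref{sec:hopf-comonads}. Since $A$ is in particular a bimonoid and $B$ is commutative, Corollary~\ref{dualbimonoid} (applied with the roles of its two arguments interchanged) equips $P(A,B)$ with a canonical bimonoid structure, so only the antipode remains to be produced. By Proposition~\ref{prop:10}, the bimonoid $P(A,B)$ is Hopf precisely when the monoidal comonad $G=(-\otimes P(A,B))$ on $\ca V$ is left Hopf, and is op-Hopf precisely when $G$ is right Hopf. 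I would therefore work with this comonad, whose Eilenberg--Moore coalgebras are exactly the right $P(A,B)$-comodules.

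To test the Hopf condition on $G$ I would apply Proposition~\ref{prop:9}. Its hypotheses are available here: because $\ca V$ is closed, $G=(-\otimes P(A,B))$ has right adjoint $[P(A,B),-]$ and is cocontinuous, hence $\kappa$-accessible; by Lemma~\ref{l:12}, $\ca V^{G}=\Comod(P(A,B))$ is locally presentable and its dualizable coalgebras are $\kappa$-presentable. The remaining hypothesis---that every $P(A,B)$-comodule be a $\kappa$-filtered colimit of dualizable comodules---is exactly where the fundamental theorem of comodules (Definition~\ref{df:6}) enters, with $\kappa$ taken to be the degree at which the dualizable comodules are presentable. Proposition~\ref{prop:9} then reduces left- (resp.\ right-) Hopfness of $G$ to the assertion that every left- (resp.\ right-) dualizable $P(A,B)$-comodule admits a left (resp.\ right) dual inside $\Comod(P(A,B))$.

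It remains to supply these duals, and here I would transport the question to $\avb$. Since $A$ is a bimonoid and $B$ commutative, Corollary~\ref{cor:1} provides a monoidal isomorphism $\Comod_d(P(A,B))\cong(\avb)_d$; as a monoidal isomorphism it preserves and reflects both left and right duals, so the problem becomes whether every dualizable object of $\avb$ has a left (resp.\ right) dual. This is precisely the content of Proposition~\ref{prop:4}: when $A$ is Hopf the dualizable objects of $\avb$ have left duals, and when $A$ is op-Hopf they have right duals. Chaining Corollary~\ref{dualbimonoid}, Proposition~\ref{prop:10}, Proposition~\ref{prop:9}, Corollary~\ref{cor:1} and Proposition~\ref{prop:4} yields the theorem, the crucial structural point being that a left dual feeds the left-Hopf condition (so $A$ Hopf gives $P(A,B)$ Hopf) whereas a right dual feeds the right-Hopf condition (so $A$ op-Hopf gives $P(A,B)$ op-Hopf).

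The main obstacle I anticipate is twofold. The first part is careful bookkeeping of this left/right correspondence as one passes through Proposition~\ref{prop:10}, Proposition~\ref{prop:9} and Proposition~\ref{prop:4}, since a single mismatch would swap Hopf with op-Hopf. The second, more delicate, part is the verification that the filtered colimits furnished by the fundamental theorem can genuinely be arranged to be $\kappa$-filtered for the accessibility degree $\kappa$ of $G$; this is exactly the hypothesis of Proposition~\ref{prop:9}, and it is the step where the hypothesis on $\ca V$ is doing the essential work, the \emph{fundamental theorem of comodules} being precisely what guarantees enough dualizable comodules to reconstruct an arbitrary comodule.
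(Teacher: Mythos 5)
Your proposal is correct and is essentially the paper's own proof: the paper chains exactly the same results --- the monoidal isomorphism $\Comod_d(P(A,B))\cong(\avb)_d$ (Corollaries~\ref{cor:3} and~\ref{cor:1}), left/right autonomy of $(\avb)_d$ from Proposition~\ref{prop:4}, the fundamental theorem of comodules feeding Proposition~\ref{prop:9}, and the equivalence of Proposition~\ref{prop:10} --- merely written in the opposite (bottom-up) order, with the same left-dual/left-Hopf versus right-dual/right-Hopf bookkeeping. Your explicit attention to the bimonoid structure via Corollary~\ref{dualbimonoid} and to the filtered-versus-$\kappa$-filtered colimit issue (which is harmless here since $(-\otimes P(A,B))$ is cocontinuous, $\mathcal{V}$ being closed) is, if anything, more careful than the paper, which passes over both points silently.
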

\begin{proof}
  The category of dualizable right $P(A,B)$-comodules is monoidally isomor\-phic over
  $\mathcal{V}$ to the category of dualizable objects in $\avb$, by
  Corollary~\ref{cor:3} and Corollary~\ref{cor:1}, and the latter category is left (resp. right) autonomous by
  Proposition~\ref{prop:4}. So, any dualizable comodule has a left
  (resp. right) dual. By the fundamental theorem of comodules,
  each $P(A,B)$-comodule is a colimit of dualizable ones, so the comonad
  $(-\otimes P(A,B))$ is left (resp. right) Hopf by
  Proposition~\ref{prop:9}. This is equivalent to saying that $P(A,B)$ is a Hopf
  (resp. op-Hopf) monoid, by Proposition~\ref{prop:10}.
\end{proof}

\begin{ex}
  \label{ex:11}
    If $A$ is a Hopf algebra over a field $k$, then $P(A,B)$
    is a Hopf algebra for any commutative $k$-algebra
    $B$.
     Let $A^{\mathrm{op}}$ be the bialgebra obtained by taking
    the opposite multiplication but leaving the comultiplication intact. If
    $A^{\mathrm{op}}$ is a Hopf algebra, then $P(A,B)$ is op-Hopf.
    The example of graded (co)algebras is explored in the next section.
\end{ex}

\section{Example: graded (co)algebras}
\label{sec:exampl-dg-coalg}

  Recall from Example~\ref{ex:14} that the category $\mathbf{gVect}_{\mathbb{Z}}$ of $\mathbb{Z}$-graded
  vector spaces is a locally finitely presentable
  symmetric monoidal closed category. In what follows, graded (co)algebra
  (Example \ref{ex:5}) and graded (co)module mean (co)monoid and (co)module
  in the said monoidal category.

\begin{lem}
  \label{l:3}
  Let $M$ be a (right) graded comodule over a graded coalgebra $C$. Any homogeneous
  finite-dimensional space of $M$ is contained in a finite-dimensional
  sub graded comodule. 
\end{lem}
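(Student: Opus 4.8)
The plan is to adapt the classical proof of the fundamental theorem of comodules over a field, keeping careful track of the grading so that the subcomodule produced is itself graded. Write the coaction of $M$ as $\rho\colon M\to M\otimes C$, and recall that, as $C$ is a graded coalgebra and $M$ a graded comodule, $\rho$ is a degree-zero morphism, so $\rho(M_n)\subseteq\bigoplus_{i+j=n}M_i\otimes C_j$. First I would reduce to the case of a single homogeneous element: a finite-dimensional homogeneous subspace is spanned by finitely many homogeneous elements $m^{(1)},\dots,m^{(r)}$, and a finite sum of finite-dimensional sub graded comodules is again one, so it suffices to enclose each $m^{(k)}$ in a finite-dimensional sub graded comodule.

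For a single homogeneous $m\in M_n$, I would fix a homogeneous basis $\{c_\lambda\}_{\lambda\in\Lambda}$ of $C$ and write $\rho(m)=\sum_{\lambda}m_\lambda\otimes c_\lambda$ with only finitely many $m_\lambda\neq 0$; let $S$ be this finite index set and $N=\operatorname{span}\{m_\lambda:\lambda\in S\}$. Because $\rho$ preserves degree and the $c_\lambda$ are homogeneous, each nonzero $m_\lambda$ is homogeneous (of degree $n-\deg c_\lambda$), so $N$ is a finite-dimensional graded subspace, and $m=(\mathrm{id}\otimes\varepsilon)\rho(m)=\sum_\lambda\varepsilon(c_\lambda)m_\lambda$ lies in $N$.

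The key step is to verify $\rho(N)\subseteq N\otimes C$, for which I would use coassociativity $(\rho\otimes\mathrm{id})\rho=(\mathrm{id}\otimes\Delta)\rho$. Expanding $\Delta(c_\lambda)=\sum_{\mu,\nu}r^\lambda_{\mu\nu}\,c_\mu\otimes c_\nu$ in the chosen basis and comparing the coefficients of $c_\mu\otimes c_\nu$ on both sides of the coassociativity identity applied to $m$ yields $(m_\nu)_\mu=\sum_\lambda r^\lambda_{\mu\nu}m_\lambda$, whence $\rho(m_\nu)=\sum_\lambda m_\lambda\otimes\bigl(\sum_\mu r^\lambda_{\mu\nu}c_\mu\bigr)$. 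Since $m_\lambda=0$ for $\lambda\notin S$, this exhibits $\rho(m_\nu)\in N\otimes C$ for every $\nu\in S$, so $N$ is a subcomodule; being graded, it is a sub graded comodule, and it is finite-dimensional and contains $m$, as required.

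I do not expect a serious obstacle here: the mathematical content is exactly the classical finite-dimensionality argument, and the only additional care needed is to choose a homogeneous basis of $C$ so that the coefficient space $N$ inherits a grading. The one point worth stating cleanly is why $N$ closes up within the finite set $S$ rather than spilling into other basis indices; this is precisely the vanishing $m_\lambda=0$ for $\lambda\notin S$ in the displayed formula for $\rho(m_\nu)$.
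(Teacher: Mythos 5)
Your proof is correct and is essentially the paper's own argument: the paper proves this lemma simply by citing Lemma~1.1 of Getzler--Goerss (the classical coefficient-space argument that every element of a comodule lies in a finite-dimensional subcomodule), remarking only that negative grading causes no change. Your write-up spells out exactly that argument, with the homogeneous-basis bookkeeping that makes the resulting subcomodule $N$ graded, so there is nothing to flag.
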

\begin{proof}
  The proof is identical to that of~\cite[Lemma~1.1]{Getzler-Goerss}, except
  that we admit negative grading.
\end{proof}

The above lemma immediately yields:
\begin{cor}
  \label{cor:9}
  The category $\mathbf{gVect}_{\mathbb{Z}}$ of graded
  vector spaces satisfies the fundamental theorem of comodules.
\end{cor}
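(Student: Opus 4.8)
The plan is to realise every graded $C$-comodule $M$ as the filtered colimit of its finite-dimensional sub graded comodules, after noting that in $\mathbf{gVect}_{\mathbb{Z}}$ the dualizable objects are precisely the finite-dimensional ones. Recall that a graded vector space $X$ admits a dual exactly when its total dimension $\sum_i\dim X_i$ is finite, the dual being $(X^\vee)_n=(X_{-n})^*$; consequently a $C$-comodule is dualizable in the sense of Definition~\ref{df:6} if and only if its underlying graded vector space is finite-dimensional.

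First I would let $\mathcal{D}$ denote the poset, ordered by inclusion, of all finite-dimensional sub graded comodules of $M$. This poset is filtered: it contains the zero comodule, and for $D,D'\in\mathcal{D}$ the sum $D+D'$ is again a finite-dimensional sub graded comodule containing both. Thus the inclusions make $\mathcal{D}$ a filtered diagram of dualizable objects in $\Comod_{\mathbf{gVect}_{\mathbb{Z}}}(C)$, and it remains to identify its colimit with $M$.

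Next I would check that the canonical comparison $\colim_{D\in\mathcal{D}}D\to M$ is invertible. Because the comonad $(-\otimes C)$ is cocontinuous (it has right adjoint $[C,-]$), the forgetful functor $\Comod_{\mathbf{gVect}_{\mathbb{Z}}}(C)\to\mathbf{gVect}_{\mathbb{Z}}$ creates colimits; so the comparison may be tested in $\mathbf{gVect}_{\mathbb{Z}}$, degreewise, where a filtered colimit of a directed family of subobjects is their union. It therefore suffices to prove $\bigcup_{D\in\mathcal{D}}D_n=M_n$ for each $n$. Given $v\in M_n$, the line $kv$ is a homogeneous finite-dimensional subspace, so Lemma~\ref{l:3} places it inside some $D\in\mathcal{D}$, whence $v\in D_n$. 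This exhausts $M_n$, so $M=\colim_{D\in\mathcal{D}}D$ is a filtered colimit of dualizable comodules, which is exactly the fundamental theorem of comodules for $\mathbf{gVect}_{\mathbb{Z}}$.

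The substance of the argument is entirely in Lemma~\ref{l:3}: filteredness alone only shows that the finite-dimensional sub-comodules form a directed system, and it is Lemma~\ref{l:3} that guarantees they cover $M$ by enlarging each homogeneous line to a finite-dimensional sub-comodule. The remaining ingredients --~filteredness, the creation of colimits by the forgetful functor, and the identification of dualizable graded spaces with finite-dimensional ones~-- are routine, so I expect no real obstacle beyond invoking the lemma.
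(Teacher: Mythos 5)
Your proof is correct and takes essentially the same route as the paper: Lemma~\ref{l:3} provides the filtered exhaustion of any comodule by its finite-dimensional sub graded comodules, and the identification of dualizable graded spaces with finite-dimensional ones does the rest. The only difference is one of emphasis: you assert the dualizability of finite-dimensional graded spaces as a known fact (with the correct dual $(X^\vee)_n=(X_{-n})^*$), whereas that verification --~via the comparison morphism $\operatorname{Hom}(X,k[0])\otimes Y\to\operatorname{Hom}(X,Y)$~-- constitutes the paper's entire written proof, which in turn treats the colimit argument you spell out as immediate from Lemma~\ref{l:3}.
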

\begin{proof}
  The category of graded vector spaces has an internal hom given by
  \begin{equation}
    \label{eq:8}
    \operatorname{Hom}(X,Y)_n=\prod_{i\in \mathbb{Z}}[X_i,Y_{i+n}]
  \end{equation}
  and unit object $I=k[0]$ the base field $k$ concentrated on degree $0$, as mentioned
  in Example \ref{ex:14}. We
  have to show that finite-dimensional graded spaces have a dual object in
  $\mathbf{gVect}_{\mathbb{Z}}$; for this suppose that $X$ is finite-dimensional,
  ie it is $0$ except in finitely many degrees, say between $-n$ and $n$. If $X$
  had a dual, it should be $\operatorname{Hom}(X,k[0])$
  \begin{equation}
    \label{eq:10}
    \operatorname{Hom}(X,k[0])_m=[X_{-m},k].
  \end{equation}
  By general considerations on duals and internal homs, $X$ has a dual if and
  only if the comparison morphism $\operatorname{Hom}(X,k[0])\otimes
  Y\to\operatorname{Hom}(X,Y)$, with $m$-component
  \begin{equation}
    \label{eq:9}
    \sum_{i=-n}^n [X_{-i},k]\otimes Y_{m-i}\longrightarrow \prod_{j={-n}}^n[X_j,Y_{j+m}]
  \end{equation}
  is an isomorphism. Since the product in the codomain is finite, it can be
  replaced by a sum, and reindexing, it is isomorphic to
  $\sum_{\ell=-n}^n[X_{-\ell},Y_{m-\ell}]$. It is now easy to see
  that~\eqref{eq:9} is the sum of the isomorphisms of the type $[V,k]\otimes
  W\cong [V,W]$ for $V$ a finite-dimensional vector space.
\end{proof}
\begin{rmk}
  \label{rmk:1}
  In the above corollary, it was important that the grading is over the group
  of integers. For example, in the category of non-negatively ($\mathbb{N}$-) graded spaces,
  very few objects have a dual: they are all
  concentrated in degree $0$.
\end{rmk}

We can now describe the result of applying Theorem~\ref{thm:4} to the base
category $\mathcal{V}=\mathbf{gVect}_{\mathbb{Z}}$.
\begin{prop}
  Let $H$ be a $\mathbb{Z}$-graded bialgebra and $B$ a commutative
  $\mathbb{Z}$-graded algebra. Then $P(H,B)$ is a $\mathbb{Z}$-graded
  bialgebra. If $H$ is a Hopf (resp. op-Hopf) graded algebra, then $P(H,B)$ is
  Hopf (resp. op-Hopf) too.\label{prop:1}
\end{prop}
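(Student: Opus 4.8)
The plan is to read the proposition off from the general machinery developed in the preceding sections, whose entire purpose was to reduce statements of this kind to verifying hypotheses on the base category $\mathcal{V}=\mathbf{gVect}_{\mathbb{Z}}$. First I would recall from Example~\ref{ex:14} that $\mathbf{gVect}_{\mathbb{Z}}$ is a locally (finitely) presentable symmetric monoidal closed category, so that $P(H,B)$ exists as a comonoid and all the structural results of Sections~\ref{enrichmonscomons} and~\ref{sec:univ-meas-comon} are available.

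For the bialgebra claim I would invoke Corollary~\ref{dualbimonoid} directly: with $\mathcal{V}=\mathbf{gVect}_{\mathbb{Z}}$ locally presentable symmetric monoidal closed, $H$ a bimonoid (graded bialgebra) and $B$ a commutative monoid (commutative graded algebra), that corollary equips $P(H,B)$ with a canonical bimonoid structure, i.e.\ it is a graded bialgebra. The only point needing care is the variance convention: in Corollary~\ref{dualbimonoid} it is the \emph{first} argument that must carry the bimonoid structure and the \emph{second} that must be commutative, which matches $P(H,B)$ exactly. For the Hopf (resp.\ op-Hopf) claim, the essential extra ingredient is Corollary~\ref{cor:9}, which asserts that $\mathbf{gVect}_{\mathbb{Z}}$ satisfies the fundamental theorem of comodules. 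With this in hand all the hypotheses of Theorem~\ref{thm:4} are met---$\mathcal{V}$ is locally presentable symmetric monoidal closed and satisfies the fundamental theorem of comodules, $H$ is Hopf (resp.\ op-Hopf), and $B$ is commutative---so Theorem~\ref{thm:4} yields at once that $P(H,B)$ is Hopf (resp.\ op-Hopf).

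There is essentially no obstacle in the proposition itself; the substantive work was done earlier, in establishing Corollary~\ref{cor:9} via Lemma~\ref{l:3} on finite-dimensional graded subcomodules (the subtlety there being that one must admit negative grading, cf.\ Remark~\ref{rmk:1}), and in the general Theorem~\ref{thm:4}. The proposition is thus the specialisation of that theory to $\mathbf{gVect}_{\mathbb{Z}}$, and the proof amounts to recording that each invoked result's hypotheses hold for this base category.
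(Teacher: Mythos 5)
Your proposal is correct and follows exactly the paper's route: the paper obtains this proposition by applying Theorem~\ref{thm:4} to $\mathcal{V}=\mathbf{gVect}_{\mathbb{Z}}$, with Corollary~\ref{cor:9} (via Lemma~\ref{l:3}) supplying the fundamental theorem of comodules and Corollary~\ref{dualbimonoid} supplying the bimonoid structure. Your additional care about the variance in Corollary~\ref{dualbimonoid} and the $\mathbb{Z}$- versus $\mathbb{N}$-grading subtlety (Remark~\ref{rmk:1}) matches the paper's own emphasis.
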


If, instead of $\mathbb{Z}$-graded spaces, we wanted to work with
$\mathbb{N}$-graded spaces, and obstacle presents itself:
$\mathbf{gVect}_{\mathbb{N}}$ does not have enough objects with duals to satisfy
the fundamental theorem of comodules --~Definition~\ref{df:6}. We can say
something, however, if we admit the restriction to (graded) (co)commutative
algebras.
\begin{prop}
  \label{prop:3}
  Let $H$ be a cocommutative $\mathbb{N}$-graded Hopf algebra and $B$ a commutative
  $\mathbb{N}$-graded algebra. Then $P(H,B)$ is a $\mathbb{N}$-graded Hopf algebra.
\end{prop}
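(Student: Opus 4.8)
The plan is to apply Theorem~\ref{thm:5} directly; the whole point is that this theorem, unlike Theorem~\ref{thm:4}, never invokes the fundamental theorem of comodules. Its hypotheses are merely that the base is a locally presentable symmetric monoidal closed category, that $H$ is a cocommutative Hopf monoid, and that $B$ is a commutative monoid. Consequently the obstacle flagged above---that $\mathbf{gVect}_{\mathbb{N}}$ has too few dualizable objects to satisfy Definition~\ref{df:6}, cf.\ Remark~\ref{rmk:1}---is simply irrelevant in the cocommutative setting: the antipode produced by Theorem~\ref{thm:5} is the \emph{explicit} comonoid morphism $P(s,B)$, built from the antipode $s$ of $H$ via Lemma~\ref{l:5} and the cocommutativity of $H$, rather than extracted from a filtered colimit of dualizable comodules.

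First I would confirm that $\mathbf{gVect}_{\mathbb{N}}$ meets the hypotheses of Theorem~\ref{thm:5}. By Example~\ref{ex:14} it is locally finitely presentable and symmetric monoidal, carrying the symmetry $s_{V,W}(x\otimes y)=(-1)^{ab}y\otimes x$ inherited from $\mathbf{gVect}_{\mathbb{Z}}$, and by Example~\ref{ex:5} it is monoidal closed. The one subtlety worth recording is that the internal hom here is \emph{not} the restriction of the one on $\mathbf{gVect}_{\mathbb{Z}}$: since morphisms of $\mathbb{N}$-graded spaces cannot lower degrees, the right adjoint to $(-\otimes V)$ has components $[V,W]_n=\prod_{i\geq 0}[V_i,W_{i+n}]$ for $n\geq 0$, discarding the negative-degree parts that would appear in the $\mathbb{Z}$-graded formula. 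With this internal hom $\mathbf{gVect}_{\mathbb{N}}$ is a locally presentable symmetric monoidal closed category, which is exactly the context demanded by Theorem~\ref{thm:5}.

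It then remains only to match vocabulary: a cocommutative Hopf monoid in $\mathbf{gVect}_{\mathbb{N}}$ is precisely a cocommutative $\mathbb{N}$-graded Hopf algebra $H$, and a commutative monoid is a commutative $\mathbb{N}$-graded algebra $B$. The underlying bimonoid structure on $P(H,B)$ is the one furnished by Corollary~\ref{dualbimonoid}, and Theorem~\ref{thm:5} upgrades it to a Hopf monoid, i.e.\ an $\mathbb{N}$-graded Hopf algebra. The main---and essentially only---point requiring care is the verification that the closed structure on $\mathbf{gVect}_{\mathbb{N}}$ is the one just described and genuinely lands in $\mathbb{N}$-graded spaces; once this is granted, the proposition is an immediate specialisation of Theorem~\ref{thm:5}, with no separate antipode computation needed.
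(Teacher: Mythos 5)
Your proposal is correct and is exactly the paper's argument: the paper proves Proposition~\ref{prop:3} by a direct application of Theorem~\ref{thm:5}, whose hypotheses (locally presentable symmetric monoidal closed base, cocommutative Hopf monoid $H$, commutative monoid $B$) avoid the fundamental theorem of comodules and hence the shortage of dualizable objects in $\mathbf{gVect}_{\mathbb{N}}$. Your additional verification of the closed structure, with internal hom $[V,W]_n=\prod_{i\geq 0}\mathrm{Hom}_k(V_i,W_{i+n})$ obtained by truncating the $\mathbb{Z}$-graded formula to non-negative degrees, is a correct and worthwhile elaboration of a point the paper leaves implicit.
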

The proof of the proposition is an application of Theorem~\ref{thm:5}.

An $\mathbb{N}$-graded vector space is \emph{connected} if its component of
degree $0$ is one-dimensio\-nal. It is well known that connected
$\mathbb{N}$-graded bialgebras automatically are Hopf algebras. Even if one is
only interested in connected spaces, Proposition~\ref{prop:3} is not redundant,
as $P(H,B)$ may not be connected even when $H$ and $B$ are so. For example, a
morphism of $\mathbb{N}$-graded coalgebras $k[0]\to C$ is equivalently given by
an element $g\in C_0$ that is a group-like element of $C$, ie
$\Delta(g)=g\otimes g$ and $\varepsilon(g)=1$. If $C$ is connected, there is at
most one such element, as the restriction of $\varepsilon\colon C\to k[0]$ to
degree $0$ is an isomorphism $C_0\cong k$. Therefore, if $P(A,B)$ is connected,
then there exists at most one $\mathbb{N}$-graded morphism of algebras
$A\to B$, by the definition of $P(A,B)$. An example where this does not happen,
and therefore where $P(A,B)$ is not connected, is that of $k=\mathbb{F}_2$, the
field of characteristic $2$ (so $-1=1$ and graded (co)commutativity is just
ordinary (co)commutativity), and $A=\mathbb{F}_2[x]$ is the polynomial algebra
with the usual Hopf algebra structure, whose cocomultiplication is given by
$\Delta(x)=1\otimes x+x\otimes 1$. For any connected $\mathbb{F}_2$-algebra
$B$, a morphism of graded algebras $\mathbb{F}_2[x]\to B$ is defined by a unique
element of $B_1$. In this way, any $B$ for which $B_1\neq 0$ provides an example
in which $P(\mathbb{F}_2[x],B)$ is not connected.

\bibliographystyle{abbrv}
\bibliography{myreferences}

\end{document}